\documentclass[10pt,a4paper]{article}
\usepackage{amsmath,amssymb,amsthm,graphics,graphicx,epic,eepic,multicol,ascmac}

\numberwithin{equation}{section}
\theoremstyle{theorem}
\newtheorem{thm}{Theorem}[section]
\newtheorem{prop}[thm]{Proposition}
\newtheorem{lem}[thm]{Lemma}
\newtheorem{rem}[thm]{Remark}

\newtheorem{ex}[thm]{Example}

\theoremstyle{definition}
\newtheorem{defn}[thm]{Definition}

\def\al{\alpha}

\def\ep{\epsilon}
\def\wht(#1){\widehat{\ #1\ }}

\newcommand{\cA}{{\mathcal A}}

\newcommand{\cF}{{\mathcal F}}

\newcommand{\cY}{{\mathcal Y}}

\newcommand{\frg}{\mathfrak g}
\newcommand{\frh}{\mathfrak h}

\newcommand{\frl}{\mathfrak l}

\newcommand{\frn}{\mathfrak n}
\newcommand{\fro}{\mathfrak o}
\newcommand{\frp}{\mathfrak p}

\newcommand{\frs}{\mathfrak s}

\newcommand{\bbC}{\mathbb C}

\newcommand{\ch}{\mathrm{ch}}

\newcommand{\lbr}{\begin{bmatrix}}
\newcommand{\rbr}{\end{bmatrix}}

\newcommand{\cd}{commutative diagram }

\def\ge{\frg}

\def\cP{{\mathcal P}}
\def\al{\alpha}

\def\beneme{\begin{enumerate}}
\def\beq{\begin{equation}}
\def\beqn{\begin{eqnarray}}
\def\beqnn{\begin{eqnarray*}}
\def\bfi{{\mathbf i}}
\def\bfii0{{\bf i_0}}

\def\bbra#1,#2,#3{\left\{\begin{array}{c}\hspace{-5pt}
#1;#2\\ \hspace{-5pt}#3\end{array}\hspace{-5pt}\right\}}
\def\cd{\cdots}
\def\ci(#1,#2){c_{#1}^{(#2)}}
\def\Ci(#1,#2){C_{#1}^{(#2)}}
\def\mpp(#1,#2,#3){#1^{(#2)}_{#3}}
\def\bCi(#1,#2){\ovl C_{#1}^{(#2)}}
\def\ch(#1,#2){c_{#2,#1}^{-h_{#1}}}
\def\cc(#1,#2){c_{#2,#1}}

\def\del{\delta}
\def\Del{\Delta}

\def\di(#1,#2){D_{#1}^{(#2)}}
\def\dbi(#1,#2){\ovl D_{#1}^{(#2)}}

\def\eneme{\end{enumerate}}
\def\ep{\epsilon}
\def\eeq{\end{equation}}
\def\eeqn{\end{eqnarray}}
\def\eeqnn{\end{eqnarray*}}

\def\gau#1,#2{\left[\begin{array}{c}\hspace{-5pt}#1\\
\hspace{-5pt}#2\end{array}\hspace{-5pt}\right]}

\def\ji(#1,#2){j_{#1}^{(#2)}}

\def\lan{\langle}

\def\lm{\lambda}
\def\Lm{\Lambda}
\def\mapright#1{\smash{\mathop{\longrightarrow}\limits^{#1}}}

\def\nd{\noindent}

\def\ovl{\overline}

\def\qq{\qquad}
\def\q{\quad}
\def\qed{\hfill\framebox[2mm]{}}

\def\ran{\rangle}

\def\TY(#1,#2,#3){#1^{(#2)}_{#3}}

\def\xxi(#1,#2,#3){\displaystyle {}^{#1}\Xi^{(#2)}_{#3}}
\def\xsi(#1,#2,#3){\displaystyle {}^{#1}\Sigma^{(#2)}_{#3}}
\def\xE(#1,#2,#3){\displaystyle {}^{#1}E_{#2}[#3]}
\def\xF(#1,#2){\displaystyle {}^{#1}F_{#2}}
\def\xx(#1,#2){\displaystyle {}^{#1}\Xi_{#2}}
\def\W1{W(\varpi_1)}

\def\m@th{\mathsurround=0pt}
\def\fsquare(#1,#2){
\hbox{\vrule$\hskip-0.4pt\vcenter to #1{\normalbaselines\m@th
\hrule\vfil\hbox to #1{\hfill$\scriptstyle #2$\hfill}\vfil\hrule}$\hskip-0.4pt
\vrule}}

\newcommand{\ba}{\begin{array}}
\newcommand{\ea}{\end{array}}

\newcommand{\eq}{\begin{eqnarray}}
\newcommand{\eneq}{\end{eqnarray}}

\title{\textbf{\large{Cluster Variables 
on Double Bruhat Cells $G^{u,e}$ of Classical Groups and 
Monomial Realizations of 
Demazure Crystals}}}
\author{\normalsize{YUKI KANAKUBO\thanks{Division of Mathematics, 
Sophia University, Kioicho 7-1, Chiyoda-ku, Tokyo 102-8554,
Japan: {j\_chi\_sen\_you\_ky@eagle.sophia.ac.jp}}
\ and\ 
TOSHIKI NAKASHIMA\thanks{Division of Mathematics, 
Sophia University, Kioicho 7-1, Chiyoda-ku, Tokyo 102-8554,
Japan: { toshiki@sophia.ac.jp}:
supported in part by JSPS Grants 
in Aid for Scientific Research $\sharp 15K04794$.}
}}
\date{}

\begin{document}

\maketitle
\vspace{-10pt}

\begin{abstract}
Let $G$ be a simply connected simple algebraic group over $\mathbb{C}$, 
$B$ and $B_-$ its two opposite Borel subgroups, and $W$ the associated 
Weyl group. 
It is shown that the coordinate ring 
${\mathbb C}[G^{u,v}]$ ($u$, $v\in W$)
of the double Bruhat cell $G^{u,v}=BuB\cap B_-vB_-$ is 
isomorphic to the cluster algebra ${\cA}(\textbf{i})_{{\mathbb C}}$ 
and the initial cluster variables of ${\mathbb C}[G^{u,v}]$ are 
the generalized minors $\Delta(k;\textbf{i})$
(\cite{A-F-Z:2005,GY:2016}). 
In the case that a classical group $G$ is of 
type ${\rm B}_r$, ${\rm C}_r$ or ${\rm D}_r$, 
we shall describe the non-trivial last $r$ initial cluster variables 
$\{\Delta(k;\textbf{i})\}_{(m-2)r<k\leq (m-
1)r}$ ($m$ is given below) 
of the cluster algebra $\bbC[L^{u,e}]$ (\cite{GLS:2011,GY:2016})
in terms of monomial realization of Demazure crystals,
where $L^{u,e}$ is the reduced double Bruhat
cell of type $(u,e)$.
The relation between $\Delta(k;\textbf{i})$ on $G^{u,e}$ and on $L^{u,e}$ 
is described in Proposition \ref{gprop} below.
We also present the corresponding results for type ${\rm A}_r$ 
though the results for all initial cluster variables have been obtained
in \cite{KaN:2015}.
\end{abstract}



\section{Introduction}
In \cite{FZ2:2002,FZ3:2003} Fomin and Zelevinsky initiated the
theory of cluster algebras, which is a commutative algebra 
generated by so-called ``cluster variables''. 

Let $G$ be a simply connected complex simple algebraic group of rank
$r$, $B,B_-\subset G$ the opposite Borel subgroups, 
$H:=B\cap B_-$ the maximal torus, 
$N\subset B$, $N_-\subset B_-$ the unipotent radical
and $W=\lan s_i|1\leq  i\leq  r\ran$ 
the associated Weyl group generated by the simple reflections 
$\{s_i\}_{1\leq i\leq r}$. 
For $u,v\in W$, define the (reduced) double Bruhat cell 
$G^{u,v}:=(Bu B)\cap(B_- v B_-)$ (resp. $L^{u,v}:=(Nu N)\cap(B_- v B_-)$).
In \cite{A-F-Z:2005}, it is revealed that 
 the coordinate ring $\bbC[G^{u,v}]$ ($u,v\in W$) of double Bruhat
cell $G^{u,v}$ is isomorphic to the upper cluster algebra
$\ovl\cA(\bfi)_{\bbC}$. 
Recently, in \cite{GY:2016}, Goodear and Yakimov have shown that the algebra
$\bbC[G^{u,v}]$ ($u,v\in W$) is isomorphic to the cluster algebra
$\cA(\bfi)_\bbC$ (see Subsection \ref{UCACA} and \ref{CADBC}):
\begin{eqnarray*}
\phi:\,\, \cA(\bfi)_\bbC&\mapright{\sim}&\bbC[G^{u,v}],\\
x_k&\longrightarrow& \Del(k;\bfi),
\end{eqnarray*}
where $\bfi$ is a reduced word for the shuffle $(u,v)$ and 
$\Del(k;\bfi)$ is certain generalized minor on $G^{u,v}$. It means that
the $k$-th initial cluster variable of the cluster algebra 
$\bbC[G^{u,v}]$ is given
as certain generalized minors $\Del(k;\bfi)$ on $G^{u,v}$.

In \cite{KaN:2015}, for type ${\rm A}_r$ and a pair $(u,e)$ with the 
specific reduced word $\bfi$  
we gave the explicit forms of these initial 
cluster variables $\{\Del(k;\bfi)\}$ and described them by using 
monomial realization of certain Demazure crystals.

In this article, we shall get the partial results 
for the classical groups of type ${\rm B}_r, {\rm C}_r$ and ${\rm D}_r$.
Indeed, we find that 
the non-trivial last $r$ initial cluster variables are expressed by 
using monomial realizations of Demazure crystals.
Let us explain what we obtain here in more details.
As the reduced longest word let us take 
 $\bfi_0=(12\cd r)^{r'}$ ($r'=r$ for ${\rm B}_r$, ${\rm C}_r$, and  $r'=r-1$
for ${\rm D}_r$), and 
set $\bfi$ a left factor of $\bfi_0$ whose length is 
in $[(m-1)r+1,mr]$ ($m\leq r'$), $u$ the
corresponding Weyl group element and $(m-2)r<k\leq (m-1)r$.
Let $x^L_\bfi(t)$ be as in \eqref{xldef} and $\Del^L(k;\bfi)$ as in 
Definition \ref{gendef}. 
For example, in Example \ref{monorealex2} 
for type ${\rm C}_2$ we have 
\begin{equation}
\Del^L(2;\bfi)(Y)=
\frac{Y^2_{1,1}}{Y_{1,2}}+2\frac{Y_{1,1}}{Y_{2,1}} 
+\frac{Y_{1,2}}{Y^2_{2,1}}+\frac{1}{Y_{2,2}}
\label{intro1}
\end{equation}
where  $\bfi=(1,2,1,2)$ and $Y=(Y_{1,1},Y_{1,2},Y_{2,1},Y_{2,2})$.
On the other hand, we have the crystal graph $B(\Lm_2)$ of type 
${\rm C}_2$ as in Example \ref{monorealex1}:
\begin{equation}
Y_{0,2}\overset{2}{\longrightarrow}\frac{Y^2_{1,1}}{Y_{1,2}}
\overset{1}{\longrightarrow}\frac{Y_{1,1}}{Y_{2,1}} 
\overset{1}{\longrightarrow}\frac{Y_{1,2}}{Y^2_{2,1}}
\overset{2}{\longrightarrow}\frac{1}{Y_{2,2}}.
\label{intro2}
\end{equation}
Comparing \eqref{intro1} and \eqref{intro2}, 
it is not difficult to find their relations, 
that is, $\Del^L(2;\bfi)$ is expressed as a summation of
 all vertices but $Y_{0,2}$
of the graph, which is what we want to clarify in this article.
Here we also observe the feature different from the one for type ${\rm A}_r$.
It is that coefficients greater than 1 appear in $\Del^L(k;\bfi)$,
which is possible to occur for types ${\rm B}_r$, ${\rm C}_r$ and ${\rm D}_r$.
In the case $k\leq (m-2)r$, indeed, the generalized minors $\Del(k;\bfi)$
seem to be expressed by monomial realizations of certain subset of crystals, 
nevertheless, not necessarily  Demazure crystals. This is the reason that 
here we only treat the non-trivial last $r$ generalized minors. 
So we expect to find a new characterization 
to abstract a subset of crystals matching 
the generalized minors, which is a further task for us.

Let us see the organization of this article.
In Sect.2, we review the explicit forms of fundamental representations
of classical groups. In Sect.3, we introduce double Bruhat cells and 
in Sect.4, we shall review the notion of cluster algebras and 
generalized minors. Sect.5 is devoted to recall the theory of crystals 
and their monomial realizations.
In Sect.6, we state Theorem \ref{thm1} 
the main results of the article, which claims as follows:
suppose that $u$ is a Weyl group element corresponding to
a left factor of $\bfi_0$ with $m(r-2)< k\leq m(r-1)< l(u)\leq mr$ and $v=e$. 
In the setting, 
$\Del^L(k;\bfi)$ is expressed by a summation of 
monomials in some Demazure crystals (see Subsection \ref{demazure}).
In Sect.7, 
the explicit forms of $\Del^L(k;\bfi)$ are described
using the results in \cite{KaN:2015,KaN2:2016,Ka1:2016}.
In the last section, the proof of the main theorem is presented.

\section{Fundamental representations}\label{SectFund}

First, we review the fundamental representations of the complex simple Lie algebras $\ge$ of type ${\rm A}_r$,  ${\rm B}_r$,  ${\rm C}_r$,  and  ${\rm D}_r$ \cite{KN:1994, N1:2014}. We shall use them in calculations of generalized minors (see Subsection \ref{bilingen}). 
Let $I:=\{1,\cdots,r\}$ be a finite index set, $A=(a_{ij})_{i,j\in I}$ 
be the Cartan matrix of $\ge$, and $(\frh,\{\al_i\}_{i\in I},\{h_i\}_{i\in I})$ 
be the associated
root data 
satisfying $\al_j(h_i)=a_{ij}$ where 
$\al_i\in \frh^*$ is a simple root and 
$h_i\in \frh$ is a simple co-root.
Let $\{\Lm_i\}_{i\in I}$ be the set of the fundamental 
weights satisfying $\Lm_i(h_j)=\del_{i,j}$, $P=\bigoplus_{i\in I}\mathbb{Z}\Lm_i$ the weight lattice and $P^*=\bigoplus_{i\in I}\mathbb{Z}h_i$ the dual weight lattice. 

\subsection{Type ${\rm A}_r$}\label{SectFundA}

Let $\frg=\frs\frl(r+1,\mathbb{C})$ be the simple Lie algebra of type ${\rm A}_r$. The Cartan matrix $A=(a_{i,j})_{i,j\in I}$ of $\frg$ is as follows:
\[a_{i,j}=
\begin{cases}
2 & {\rm if}\ i=j, \\
-1 & {\rm if}\ |i-j|=1, \\
0 & {\rm otherwise.}  
\end{cases}
\]
For $\frg=\lan \frh,e_i,f_i(i\in I)\ran$, 
let us describe the vector representation 
$V(\Lm_1)$. Set ${\mathbf B}^{(r)}:=
\{v_i|\ i=1,2,\cd,r+1\}$ and define 
$V(\Lm_1):=\bigoplus_{v\in{\mathbf B}^{(r)}}\bbC v$. The weights of $v_i$ $(i=1,\cd,r+1)$ are given by ${\rm wt}(v_i)=\Lm_i-\Lm_{i-1}$, where $\Lm_0=\Lm_{r+1}=0$. We define the $\frg$-action on $V(\Lm_1)$ as follows:
\begin{eqnarray}
&& h v_j=\lan h,{\rm wt}(v_j)\ran v_j\ \ (h\in P^*,\ j\in J), \\
&&f_iv_i=v_{i+1},\q
e_iv_{i+1}=v_i \q(1\leq i\leq r),\label{A-f1}
\end{eqnarray}
and the other actions are trivial.

Let $\Lm_i$ be the $i$-th fundamental weight of type ${\rm A}_r$.
As is well-known that the fundamental representation 
$V(\Lm_i)$ $(1\leq i\leq r)$
is embedded in $\wedge^i V(\Lm_1)$
with multiplicity free.
The explicit form of the highest (resp. lowest) weight 
vector $u_{\Lm_i}$ (resp. $v_{\Lm_i}$)
of $V(\Lm_i)$ is realized in 
$\wedge^i V(\Lm_1)$ as follows:
\begin{equation}\label{A-h-l}
u_{\Lm_i}=v_1\wedge v_2\wedge\cdots\wedge v_i,\qq
v_{\Lm_i}=v_{i+1}\wedge v_{i+2}\wedge\cdots\wedge v_{r+1}.
\end{equation}

\subsection{Type ${\rm C}_r$}\label{SectFundC}

Let $\frg=\frs\frp(2r,\mathbb{C})$ be the simple Lie algebra of type ${\rm C}_r$. The Cartan matrix $A=(a_{i,j})_{i,j\in I}$ of $\frg$ is given by
\[a_{i,j}=
\begin{cases}
2 & {\rm if}\ i=j, \\
-1 & {\rm if}\ |i-j|=1\ {\rm and}\ (i,j)\neq (r-1,r), \\
-2 & {\rm if}\ (i,j)=(r-1,r), \\
0 & {\rm otherwise.}  
\end{cases}
\]
Note that $\al_i\ (i\neq r)$ are short roots and $\al_r$ is the long simple root. 

Define the total order on the set $J_{{\rm C}}:=\{i,\ovl i|1\leq i\leq r\}$ by 
\begin{equation}\label{C-order}
 1< 2<\cd< r-1< r
< \ovl r< \ovl{r-1}< \cd< \ovl
 2< \ovl 1.
\end{equation}
For $\frg=\lan \frh,e_i,f_i(i\in I)\ran$, 
let us describe the vector representation 
$V(\Lm_1)$. Set ${\mathbf B}^{(r)}:=
\{v_i,v_{\ovl i}|i=1,2,\cd,r\}$ and define 
$V(\Lm_1):=\bigoplus_{v\in{\mathbf B}^{(r)}}\bbC v$. The weights of $v_i$, $v_{\ovl{i}}$ $(i=1,\cd,r)$ are given by ${\rm wt}(v_i)=\Lm_i-\Lm_{i-1}$ and ${\rm wt}(v_{\ovl{i}})=\Lm_{i-1}-\Lm_{i}$,
where $\Lm_0=0$. We define the $\frg$-action on $V(\Lm_1)$ as follows:
\begin{eqnarray}
&& h v_j=\lan h,{\rm wt}(v_j)\ran v_j\ \ (h\in P^*,\ j\in J_{{\rm C}}), \\
&&f_iv_i=v_{i+1},\ f_iv_{\ovl{i+1}}=v_{\ovl i},\q
e_iv_{i+1}=v_i,\ e_iv_{\ovl i}=v_{\ovl{i+1}}
\q(1\leq i<r),\label{C-f1}\\
&&f_r v_r=v_{\ovl r},\qq 
e_r v_{\ovl r}=v_r,\label{C-f2}
\end{eqnarray}
and the other actions are trivial.

Let $\Lm_i$ be the $i$-th fundamental weight of type ${\rm C}_r$.
As is well-known that the fundamental representation 
$V(\Lm_i)$ $(1\leq i\leq r)$
is embedded in $\wedge^i V(\Lm_1)$
with multiplicity free.
The explicit form of the highest (resp. lowest) weight 
vector $u_{\Lm_i}$ (resp. $v_{\Lm_i}$)
of $V(\Lm_i)$ is realized in 
$\wedge^i V(\Lm_1)$ as follows:
\begin{equation}
\begin{array}{ccc}\displaystyle
u_{\Lm_i}&=&v_1\wedge v_2\wedge\cdots\wedge v_i,\\
v_{\Lm_i}&=&v_{\ovl{1}}\wedge v_{\ovl{2}}\wedge\cdots \wedge v_{\ovl{i}}.
\end{array}
\label{C-h-l}
\end{equation}

\subsection{Type ${\rm B}_r$}\label{SectFundB}

Let $\frg=\frs\fro(2r+1,\mathbb{C})$ be the simple Lie algebra of type ${\rm B}_r$. The Cartan matrix $A=(a_{i,j})_{i,j\in I}$ of $\frg$ is given by
\[a_{i,j}=
\begin{cases}
2 & {\rm if}\ i=j, \\
-1 & {\rm if}\ |i-j|=1\ {\rm and}\ (i,j)\neq (r,r-1), \\
-2 & {\rm if}\ (i,j)=(r,r-1), \\
0 & {\rm otherwise.}  
\end{cases}
\]
Note that $\al_i\ (i\neq r)$ are long roots and $\al_r$ is the short simple root. 

Define the total order on the set $J_{{\rm B}}:=\{i,\ovl i|1\leq i\leq r\}\cup\{0\}$ by 
\begin{equation}\label{B-order}
 1< 2<\cd< r-1< r< 0<
 \ovl r< \ovl{r-1}< \cd< \ovl
 2< \ovl 1.
\end{equation}
For $\frg=\lan \frh,e_i,f_i(i\in I)\ran$, 
let us describe the vector representation 
$V(\Lm_1)$. Set ${\mathbf B}^{(r)}:=
\{v_i,v_{\ovl i}|i=1,2,\cd,r\}\cup\{v_0\}$ and define 
$V(\Lm_1):=\bigoplus_{v\in{\mathbf B}^{(r)}}\bbC v$. The weights of $v_i$, $v_{\ovl{i}}$ $(i=1,\cd,r)$ and $v_0$ are as follows:
\begin{equation}\label{B-wtv}
 {\rm wt}(v_i)=\Lm_i-\Lm_{i-1},\q {\rm wt}(v_{\ovl{i}})=\Lm_{i-1}-\Lm_{i}\q (1\leq i\leq r-1),
\end{equation}
\[  {\rm wt}(v_r)=2\Lm_r-\Lm_{r-1},\q {\rm wt}(v_{\ovl{r}})=\Lm_{r-1}-2\Lm_r,\q {\rm wt}(v_0)=0, \]
where $\Lm_0=0$. We define the $\frg$-action on $V(\Lm_1)$ as follows:
\begin{eqnarray}
&& h v_j=\lan h,{\rm wt}(v_j)\ran v_j\ \ (h\in P^*,\ j\in J_{{\rm B}}), \\
&&f_iv_i=v_{i+1},\ f_iv_{\ovl{i+1}}=v_{\ovl i},\q
e_iv_{i+1}=v_i,\ e_iv_{\ovl i}=v_{\ovl{i+1}}
\q(1\leq i<r),\label{B-f1}\\
&&f_r v_r=v_{0},\qq 
e_r v_{\ovl r}=v_0,\qq f_r v_0=2v_{\ovl r},\qq e_r v_0=2v_r,\label{B-f2}
\end{eqnarray}
and the other actions are trivial.

Let $\Lm_i$ $(1\leq i\leq r-1)$ be the $i$-th fundamental weight of type ${\rm B}_r$.
Similar to the ${\rm C}_r$ case, the fundamental representation 
$V(\Lm_i)$
is embedded in $\wedge^i V(\Lm_1)$
with multiplicity free. In 
$\wedge^i V(\Lm_1)$, the highest (resp. lowest) weight 
vector $u_{\Lm_i}$ (resp. $v_{\Lm_i}$)
of $V(\Lm_i)$ is realized as the same form as in (\ref{C-h-l}).

The fundamental representation $V(\Lm_r)$ is called the {\it spin representation}. It can be realized as follows: Set 
\[ {\mathbf B}_{{\rm sp}}^{(r)}:=
\{(\ep_1,\cdots,\ep_r)|\ \ep_i\in\{+,- \}\q (i=1,2,\cd,r)\}, \]
\[ V_{{\rm sp}}^{(r)}:=\bigoplus_{v\in{\mathbf B}_{{\rm sp}}^{(r)}}\bbC v,\] 
and define the $\ge$-action on $V_{{\rm sp}}^{(r)}$ as follows:

\begin{equation}\label{Bsp-f0}
h_i(\ep_1,\cdots,\ep_r)=
\begin{cases}
\frac{\ep_i\cdot 1-\ep_{i+1}\cdot 1}{2}(\ep_1,\cdots,\ep_r) & {\rm if}\ i<r, \\
\ep_r(\ep_1,\cdots,\ep_r) & {\rm if}\ i=r,
\end{cases}
\end{equation}

\begin{equation}\label{Bsp-f1}
f_i(\ep_1,\cdots,\ep_r)=
\begin{cases}
(\ep_1,\cdots,\overset{i}{-},\overset{i+1}{+},\cdots,\ep_r) & {\rm if}\ \ep_i=+,\ \ep_{i+1}=-,\ i\neq r, \\
(\ep_1,\cdots,\ep_{r-1},\overset{r}{-}) & {\rm if}\ \ep_r=+,\ i=r, \\
0 & {\rm otherwise,}
\end{cases}
\end{equation}

\begin{equation}\label{Bsp-f2}
e_i(\ep_1,\cdots,\ep_r)=
\begin{cases}
(\ep_1,\cdots,\overset{i}{+},\overset{i+1}{-},\cdots,\ep_r) & {\rm if}\ \ep_i=-,\ \ep_{i+1}=+,\ i\neq r, \\
(\ep_1,\cdots,\ep_{r-1},\overset{r}{+}) & {\rm if}\ \ep_r=-,\ i=r, \\
0 & {\rm otherwise.}
\end{cases}
\end{equation}

Then the module $V_{{\rm sp}}^{(r)}$ is isomorphic to $V(\Lm_r)$ as a $\ge$-module.

\subsection{Type ${\rm D}_r$}\label{SectFundD}

Let $\frg=\frs\fro(2r,\mathbb{C})$ be the simple Lie algebra of type ${\rm D}_r$. The Cartan matrix $A=(a_{i,j})_{i,j\in I}$ of $\frg$ is given by
\[a_{i,j}=
\begin{cases}
2 & {\rm if}\ i=j, \\
-1 & {\rm if}\ |i-j|=1\ {\rm and}\ (i,j)\neq (r,r-1),\ (r-1,r),\ {\rm or}\ (i,j)=(r-2,r),\ (r,r-2), \\
0 & {\rm otherwise.}  
\end{cases}
\]

Define the partial order on the set $J_{{\rm D}}:=\{i,\ovl i|1\leq i\leq r\}$ by 
\begin{equation}\label{D-order}
 1< 2<\cd< r-1<\ ^{r}_{\ovl{r}}\ < \ovl{r-1}< \cd< \ovl
 2< \ovl 1.
\end{equation}
Note that there is no order between $r$ and $\ovl{r}$. For $\frg=\lan \frh,e_i,f_i(i\in I)\ran$, 
let us describe the vector representation 
$V(\Lm_1)$. Set ${\mathbf B}^{(r)}:=
\{v_i,v_{\ovl i}|i=1,2,\cd,r\}$ and define 
$V(\Lm_1):=\bigoplus_{v\in{\mathbf B}^{(r)}}\bbC v$. The weights of $v_i$, $v_{\ovl{i}}$ $(i=1,\cd,r)$ are as follows:
\begin{equation}\label{D-wtv}
 {\rm wt}(v_i)=\Lm_i-\Lm_{i-1},\q {\rm wt}(v_{\ovl{i}})=\Lm_{i-1}-\Lm_{i}\q (1\leq i\leq r-2,\ i=r),
\end{equation}
\[  {\rm wt}(v_{r-1})=\Lm_r+\Lm_{r-1}-\Lm_{r-2},\q {\rm wt}(v_{\ovl{r-1}})=\Lm_{r-2}-\Lm_{r-1}-\Lm_r, \]
where $\Lm_0=0$. We define the $\frg$-action on $V(\Lm_1)$ as follows:
\begin{eqnarray}
&& h v_j=\lan h,{\rm wt}(v_j)\ran v_j\ \ (h\in P^*,\ j\in J_{{\rm D}}), \\
&&f_iv_i=v_{i+1},\ f_iv_{\ovl{i+1}}=v_{\ovl i},\q
e_iv_{i+1}=v_i,\ e_iv_{\ovl i}=v_{\ovl{i+1}}
\q(1\leq i<r),\label{D-f1}\\
&&f_r v_r=v_{\ovl{r-1}},\ f_r v_{r-1}=v_{\ovl r},\qq 
e_r v_{\ovl r}=v_{r-1},\ e_r v_{\ovl{r-1}}=v_{r},\label{D-f2}
\end{eqnarray}
and the other actions are trivial.

Let $\Lm_i$ $(1\leq i\leq r-2)$ be the $i$-th fundamental weight of type ${\rm D}_r$. Similar to the ${\rm B}_r$ and ${\rm C}_r$ cases, the fundamental representation 
$V(\Lm_i)$
is embedded in $\wedge^i V(\Lm_1)$
with multiplicity free. In 
$\wedge^i V(\Lm_1)$, the highest (resp. lowest) weight 
vector $u_{\Lm_i}$ (resp. $v_{\Lm_i}$)
of $V(\Lm_i)$ is realized as the formula (\ref{C-h-l}).

The fundamental representations $V(\Lm_{r-1})$ and $V(\Lm_r)$ are also called the {\it spin representations}. They can be realized as follows: Set 
\[ {\mathbf B}_{{\rm sp}}^{(+,r)}\ ({\rm resp.}\ {\mathbf B}_{{\rm sp}}^{(-,r)}):=
\{(\ep_1,\cdots,\ep_r)|\ \ep_i\in\{+,- \},\q \ep_1\cdots\ep_r=+\ ({\rm resp.}\ -)\}, \]
\[ V_{{\rm sp}}^{(+,r)}\ ({\rm resp.}\ V_{{\rm sp}}^{(-,r)}):=\bigoplus_{v\in{\mathbf B}_{{\rm sp}}^{(+,r)}({\rm resp.}\ {\mathbf B}_{{\rm sp}}^{(-,r)})}\bbC v,\] 
and define the $\ge$-action on $V_{{\rm sp}}^{(\pm,r)}$ as follows:

\begin{equation}\label{Dsp-f0}
h_i(\ep_1,\cdots,\ep_r)=
\begin{cases}
\frac{\ep_i\cdot 1-\ep_{i+1}\cdot 1}{2}(\ep_1,\cdots,\ep_r) & {\rm if}\ i<r, \\
\frac{\ep_{r-1}\cdot 1+\ep_{r}\cdot 1}{2}\ep_r(\ep_1,\cdots,\ep_r) & {\rm if}\ i=r,
\end{cases}
\end{equation}

\begin{equation}\label{Dsp-f1}
f_i(\ep_1,\cdots,\ep_r)=
\begin{cases}
(\ep_1,\cdots,\overset{i}{-},\overset{i+1}{+},\cdots,\ep_r) & {\rm if}\ \ep_i=+,\ \ep_{i+1}=-,\ i\neq r, \\
(\ep_1,\cdots,\overset{r-1}{-},\overset{r}{-}) & {\rm if}\ \ep_{r-1}=+,\ \ep_r=+,\ i=r, \\
0 & {\rm otherwise,}
\end{cases}
\end{equation}

\begin{equation}\label{Dsp-f2}
e_i(\ep_1,\cdots,\ep_r)=
\begin{cases}

(\ep_1,\cdots,\overset{i}{+},\overset{i+1}{-},\cdots,\ep_r) & {\rm if}\ \ep_i=-,\ \ep_{i+1}=+,\ i\neq r, \\
(\ep_1,\cdots,\overset{r-1}{+},\overset{r}{+}) & {\rm if}\ \ep_{r-1}=-,\ \ep_r=-,\ i=r, \\
0 & {\rm otherwise.}
\end{cases}
\end{equation}

Then the module $V_{{\rm sp}}^{(+,r)}$ (resp. $V_{{\rm sp}}^{(-,r)}$) is isomorphic to $V(\Lm_r)$ (resp. $V(\Lm_{r-1})$) as a $\ge$-module.

\section{Factorization theorem}\label{DBCs}

In this section, we shall introduce (reduced) double Bruhat cells $G^{u,v}$, $L^{u,v}$, and their properties for $v=e$ and some special $u\in W$. In \cite{B-Z:2001, F-Z:1998}, these properties have been proven in more general setting. We shall state a relation between certain functions {\it generalized minors} on double Bruhat cells and crystal bases, which is our main result (Theorem \ref{thm1}). For $l\in \mathbb{Z}_{>0}$, we set $[1,l]:=\{1,2,\cdots,l\}$.

\subsection{Double Bruhat cells}\label{factpro}

Let $G$ be the simple complex algebraic group of classical type, $B$ and $B_-$ be two opposite Borel subgroups in $G$, $N\subset B$ and $N_-\subset B_-$ be their unipotent radicals, 
$H:=B\cap B_-$ a maximal torus. We set $\frg:={\rm Lie}(G)$ with the triangular decomposition $\frg=\frn_-\oplus \frh \oplus \frn$. Let $e_i$, $f_i$ $(i\in[1,r])$ be the generators of $\frn$, $\frn_-$. For $i\in[1,r]$ and $t \in \mathbb{C}$, we set
\begin{equation}\label{xiyidef} 
x_i(t):={\rm exp}(te_i),\ \ \ y_{i}(t):={\rm exp}(tf_i).
\end{equation}
Let $W:=\lan s_i |i=1,\cdots,r \ran$ be the Weyl group of $\frg$, where
$\{s_i\}$ are the simple reflections. We identify the Weyl group $W$ with ${\rm Norm}_G(H)/H$. An element 
\begin{equation}\label{smpl}
\ovl{s_i}:=x_i(-1)y_i(1)x_i(-1)
\end{equation}
is in ${\rm Norm}_G(H)$, which is representative of $s_i\in W={\rm Norm}_G(H)/H$ \cite{N1:2014}. For $u\in W$, let $u=s_{i_1}\cdots s_{i_n}$ be its reduced expression. Then we write $\ovl{u}=\ovl{s_{i_1}}\cdots \ovl{s_{i_n}}$, call $l(u):=n$ the length of $u$. We have two kinds of Bruhat decompositions of $G$ as follows:
\[ G=\displaystyle\coprod_{u \in W}B\ovl{u}B=\displaystyle\coprod_{u \in W}B_-\ovl{u}B_- .\]
Then, for $u$, $v\in W$, 
we define the {\it double Bruhat cell} $G^{u,v}$ as follows:
\[ G^{u,v}:=B\ovl{u}B \cap B_-\ovl{v}B_-. \]
This is biregularly isomorphic to a Zariski open subset of 
an affine space of dimension $r+l(u)+l(v)$ \cite[Theorem 1.1]{F-Z:1998}.

We also define the {\it reduced double Bruhat cell} $L^{u,v}$ as follows:
\[ L^{u,v}:=NuN \cap B_-vB_- \subset G^{u,v}. \] 
As is similar to the case $G^{u,v}$, $L^{u,v}$ is 
biregularly isomorphic to a Zariski open subset of an 
affine space of dimension $l(u)+l(v)$ \cite[Proposition 4.4]{B-Z:2001}.

\begin{defn}\label{redworddef}
Let $u=s_{i_1}\cdots s_{i_n}$ be a reduced expression of $u\in W$ $(i_1,\cdots,i_n\in [1,r])$. Then the finite sequence $\textbf{i}:=(i_1,\cdots,i_n)$ is called a {\it reduced word} for $u$.
\end{defn}

For example, the sequence $(1,2,3,1,2,1)$ is a reduced word of the longest element $s_1s_2s_3s_1s_2s_1$ of the Weyl group of type ${\rm A}_3$. For all the cases ${\rm A}_r$, ${\rm B}_r$, ${\rm C}_r$ and ${\rm D}_r$, we fix the reduced word $\textbf{i}_0$ of the longest element as follows:
\begin{equation}\label{redwords}
\textbf{i}_0=
\begin{cases}
(1,2,\cdots,r,1,2,\cdots,r-1,\cdots,1,2,3,1,2,1) & {\rm for}\ {\rm A}_r,\\
(1,2,\cdots,r-1,r)^r & {\rm for}\ {\rm B}_r,\ {\rm C}_r,\\
(1,2,\cdots,r-1,r)^{r-1} & {\rm for}\ {\rm D}_r.
\end{cases}
\end{equation}
In this paper, we mainly treat (reduced) Double Bruhat cells of the form $G^{u,e}:=BuB \cap B_-$, $L^{u,e}:=NuN \cap B_-$, and the element $u\in W$ whose reduced word can be written as a left factor of $\textbf{i}_0$.

\subsection{Factorization theorem}\label{fuctorisec}

In this subsection, we shall introduce the isomorphisms between double Bruhat cell $G^{u,e}$ and $H\times (\mathbb{C}^{\times})^{l(u)}$, and between $L^{u,e}$ and $(\mathbb{C}^{\times})^{l(u)}$. As in the previous subsection, let $G$ be a complex classical algebraic group of type ${\rm A}_r$, ${\rm B}_r$, ${\rm C}_r$ and ${\rm D}_r$. For a reduced word $\textbf{i}=(i_1, \cdots ,i_n)$ 
($i_1,\cdots,i_n\in[1,r]$), 
we define a map $x^G_{\textbf{i}}:H\times \mathbb{C}^n \rightarrow G$ as 
\begin{equation}\label{xgdef}
x^G_{\textbf{i}}(a; t_1, \cdots, t_n):=a\cdot y_{i_1}(t_1)\cdots y_{i_n}(t_n).
\end{equation}

\begin{thm}\label{fp}{\cite{F-Z:1998}} For $u\in W$ and its reduced word ${\rm \bf{i}}$, the map $x^G_{{\rm \bf{i}}}$ is a biregular isomorphism from $H\times (\mathbb{C}^{\times})^{l(u)}$ to a Zariski open subset of $G^{u,e}$. 
\end{thm}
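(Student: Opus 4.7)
The plan is to establish four items: (a) the image of $x^G_{\mathbf{i}}$ is contained in $G^{u,e}$; (b) the source and target have matching dimension $r+l(u)$; (c) the restriction to $H \times (\bbC^\times)^{l(u)}$ is injective; and (d) the inverse is a regular map on its image.

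For (a), since $f_i \in \frn_-$ we have $y_i(t)\in N_-$, so $x^G_{\mathbf{i}}(a;t_1,\dots,t_n)\in H\cdot N_-\subset B_-$, which places it automatically in the cell $B_-\ovl{e}B_-$ of the second Bruhat decomposition. To locate the image in the first decomposition, I would carry out a rank-$1$ $\SL_2$-computation in $\lan e_i,f_i,h_i\ran$ to show that for $t\neq 0$,
\[
y_i(t)=x_i(t^{-1})\,\ovl{s_i}\,\al_i^\vee(t)\,x_i(t^{-1}),
\]
whence $y_i(t)\in B\ovl{s_i}B$. Then the standard multiplication rule $B\ovl{s_i}B\cdot B\ovl{w}B=B\,\ovl{s_iw}\,B$ whenever $l(s_iw)>l(w)$, applied inductively along the reduced word $\mathbf{i}$, forces $y_{i_1}(t_1)\cdots y_{i_n}(t_n)\in B\ovl{u}B$. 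Multiplying on the left by $a\in H\subset B$ keeps the product in $B\ovl{u}B$, so the image lies in $B\ovl{u}B\cap B_-=G^{u,e}$. Item (b) is immediate from the cited dimension formula $\dim G^{u,e}=r+l(u)+l(e)=r+l(u)$, which coincides with $\dim(H\times(\bbC^\times)^{l(u)})$.

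For (c) and (d) I would recover the parameters $(a;t_1,\dots,t_n)$ from $g=x^G_{\mathbf{i}}(a;t_1,\dots,t_n)$ using matrix coefficients on the fundamental representations reviewed in Section~\ref{SectFund}. The key observation is that the tail $y_{i_{k+1}}(t_{k+1})\cdots y_{i_n}(t_n)$ stabilises a carefully chosen vector $v_k$ in some fundamental module $V(\Lm_{j_k})$, so a matrix coefficient $\lan u^*,\,g\cdot v_k\ran$ depends polynomially on $a$ and on $t_1,\dots,t_k$ only. Taking appropriate ratios of such matrix coefficients isolates each $t_k$ as a regular function of $g$, while evaluating the characters of $H$ on $g$ recovers $a$; this simultaneously proves injectivity of $x^G_{\mathbf{i}}$ and provides a regular inverse on its image. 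Since the image is constructible of full dimension inside the irreducible variety $G^{u,e}$, it is Zariski open, and the explicit inverse then yields biregularity onto that open subset.

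The main obstacle is the combinatorial bookkeeping in step (d): one must pin down, for each index $k$, the correct fundamental weight $\Lm_{j_k}$ and partial-product vector $v_k$ so that the chosen matrix coefficient genuinely depends on $t_1,\dots,t_k$ alone, and then verify that the resulting ratio of minors is regular — not merely rational — on the image. This input, which encodes the interaction between the exponentials $y_i(t)$ and the simple reflections ordered by the reduced word, is exactly what will later be packaged into the generalized minors $\Del(k;\mathbf{i})$, and it constitutes the heart of the Fomin–Zelevinsky factorization machinery being invoked here.
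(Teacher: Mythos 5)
The paper itself gives no proof of this statement: it is imported verbatim from Fomin--Zelevinsky \cite{F-Z:1998}, so there is nothing internal to compare against. Judged on its own terms, your steps (a) and (b) are correct and complete: the identity $y_i(t)=x_i(t^{-1})\,\ovl{s_i}\,\al_i^{\vee}(t)\,x_i(t^{-1})$ checks out in $\SL_2$, the product rule for Bruhat cells along a reduced word places the image in $B\ovl{u}B\cap B_-$, and the dimension count matches. The gap is in (c)--(d), which is where the entire content of the theorem lives. Your recipe --- choose vectors $v_k$ stabilised by the tail of the product so that $\lan u^*, g\cdot v_k\ran$ depends only on $a,t_1,\dots,t_k$, then take ``appropriate ratios'' to isolate $t_k$ --- does not work as stated. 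The matrix coefficients you obtain this way are precisely the generalized minors $\Del(k;{\rm \bf{i}})(g)$, and on the image these are \emph{sums of many Laurent monomials} in the parameters, not single monomials: see Example \ref{monorealex2} of this paper, where $\Del^L(2;{\rm \bf{i}})=\frac{Y^2_{1,1}}{Y_{1,2}}+2\frac{Y_{1,1}}{Y_{2,1}}+\frac{Y_{1,2}}{Y^2_{2,1}}+\frac{1}{Y_{2,2}}$. No ratio of such quantities isolates a single $t_k$. In Fomin--Zelevinsky's actual argument the inversion formulas express $t_k$ as monomials in generalized minors of the \emph{twisted} element $x'$ (their twist map $G^{u,v}\to G^{u^{-1},v^{-1}}$), and constructing and analysing that twist is the heart of the proof; your sketch omits it entirely, and you acknowledge as much in your final paragraph.

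A second, smaller slip: ``the image is constructible of full dimension inside the irreducible variety $G^{u,e}$, [hence] it is Zariski open'' is not a valid implication --- a dense constructible set need only \emph{contain} a dense open subset. Openness of the image comes for free once one has the explicit description of the image as the locus in $G^{u,e}$ where the relevant (twisted) minors are nonzero, but that description is exactly the missing ingredient above. So the architecture of your argument is the right one, but the two assertions that carry the proof --- regularity of the inverse and openness of the image --- are asserted rather than established, and the naive (untwisted) version of the inversion you propose would fail.
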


Next, for $i \in [1,r]$ and $t\in \mathbb{C}^{\times}$, we define as follows:
\begin{equation}\label{alxmdef}
\alpha_i^{\vee}(t):=t^{h_i},\ \ 
 x_{-i}(t):=y_{i}(t)\alpha_i^{\vee}(t^{-1}).
\end{equation}
For $\textbf{i}=(i_1, \cdots ,i_n)$
($i_1,\cdots,i_n\in[1,r]$), 
we define a map $x^L_{\textbf{i}}:\mathbb{C}^n \rightarrow G$ as 
\begin{equation}\label{xldef}
x^L_{\textbf{i}}(t_1, \cdots, t_n):=x_{-i_1}(t_1)\cdots x_{-i_n}(t_n).
\end{equation}
We have the following theorem which is similar to the previous one.

\begin{thm}\label{fp2}{\cite{B-Z:2001}}
For $u\in W$ and its reduced word ${\rm \bf{i}}$, the map $x^L_{{\rm \bf{i}}}$ is a biregular isomorphism from $ (\mathbb{C}^{\times})^{l(u)}$ to a Zariski open subset of $L^{u,e}$. 
\end{thm}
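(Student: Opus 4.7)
My plan is to reduce Theorem \ref{fp2} to Theorem \ref{fp} via an explicit change of parameters. The commutation relation $\alpha_i^\vee(s)\,y_j(t) = y_j(s^{-a_{ij}} t)\,\alpha_i^\vee(s)$, immediate from $\alpha_i^\vee(s) f_j \alpha_i^\vee(s)^{-1} = s^{-a_{ij}} f_j$, allows me to push every torus factor $\alpha_{i_k}^\vee(t_k^{-1})$ appearing in
$x^L_{\mathbf{i}}(t) = \prod_k y_{i_k}(t_k)\,\alpha_{i_k}^\vee(t_k^{-1})$
all the way to the left, producing an identity
\[
x^L_{\mathbf{i}}(t_1,\ldots,t_n) = x^G_{\mathbf{i}}\bigl(h(t);\, s_1(t),\ldots,s_n(t)\bigr),
\]
where $h(t) = \prod_k \alpha_{i_k}^\vee(t_k^{-1}) \in H$ and $s_j(t) = t_j^{-1}\prod_{k>j} t_k^{-a_{i_k,i_j}}$ is an explicit Laurent monomial. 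The parameter map $\psi \colon t \mapsto (h(t), s(t))$ is a closed immersion $(\mathbb{C}^\times)^n \hookrightarrow H \times (\mathbb{C}^\times)^n$: because the $s_j$'s have a triangular dependence on $t_j,t_{j+1},\ldots,t_n$, one can solve successively for $t_n,t_{n-1},\ldots,t_1$ (and hence for $h(t)$) as Laurent monomials in $s$. Combined with Theorem \ref{fp}, the composite $x^L_{\mathbf{i}} = x^G_{\mathbf{i}} \circ \psi$ is thus a biregular isomorphism from $(\mathbb{C}^\times)^n$ onto an $n$-dimensional locally closed subvariety of $G^{u,e}$.

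Next I verify that the image lies in $L^{u,e}$. Each factor $x_{-i}(t) = y_i(t)\alpha_i^\vee(t^{-1}) \in N_-\cdot H = B_-$, so $x^L_{\mathbf{i}}(t) \in B_-$ tautologically. For membership in $NuN$, a direct $SL_2$-matrix computation gives the rank-one identity $x_{-i}(t) = x_i(t^{-1})\,\ovl{s_i}\,x_i(t)$, which recasts
\[
x^L_{\mathbf{i}}(t) = \prod_{k=1}^n x_{i_k}(t_k^{-1})\,\ovl{s_{i_k}}\,x_{i_k}(t_k).
\]
A standard Bruhat-cell calculation --- successively commuting each $\ovl{s_{i_k}}$ through the neighbouring unipotent factors via $\ovl{s_i}\,x_\alpha(t)\,\ovl{s_i}^{-1}\in x_{s_i(\alpha)}(\pm t)$, and then invoking the unique decomposition $B\ovl{u}B = N\ovl{u}N \cdot H$ together with the reducedness of $\mathbf{i}$ --- then shows that this product in fact lies in $N\ovl{u}N$.

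Finally, because $\dim L^{u,e} = l(u) = n$ (from the transversality $\dim(NuN\cap B_-) = \dim NuN + \dim B_- - \dim G$, together with the irreducibility of $L^{u,e}$), the biregular image of $x^L_{\mathbf{i}}$ is an $n$-dimensional locally closed subvariety of the irreducible $n$-dimensional variety $L^{u,e}$, hence open and dense. The technical core of the argument is the second step: after commuting all the $\ovl{s_{i_k}}$'s through the intervening $x_{i_j}$-factors, a priori $N_-$-contributions appear from positive roots sent negative by partial products, and one must check that they collectively absorb into $N$-factors so that the final product lies strictly in $N\ovl{u}N$ rather than in the larger set $N\ovl{u}N \cdot H$. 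This cancellation is exactly what the reducedness of $\mathbf{i}$ encodes.
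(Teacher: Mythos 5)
The paper offers no proof of Theorem \ref{fp2}: it is quoted from \cite{B-Z:2001}, exactly as Theorem \ref{fp} is quoted from \cite{F-Z:1998}, so the only benchmark is the strategy of the cited source --- and your route (reparametrize $x^L_{\mathbf{i}}$ as $x^G_{\mathbf{i}}\circ\psi$ and descend the $G^{u,e}$ factorization theorem) is essentially that strategy. Your first step is correct and complete: $y_j(c)\,\alpha_i^{\vee}(s)=\alpha_i^{\vee}(s)\,y_j(s^{a_{ij}}c)$ gives exactly your $s_j(t)$, the triangular dependence makes $\psi$ an isomorphism onto the graph of a morphism $(\mathbb{C}^{\times})^n\to H$, and Theorem \ref{fp} then makes $x^L_{\mathbf{i}}$ biregular onto an $n$-dimensional subvariety that is \emph{closed} in the open set $U:=x^G_{\mathbf{i}}(H\times(\mathbb{C}^{\times})^n)$. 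The membership step is sound in outline, but the fact you actually need is the cell-multiplication lemma $N\ovl{w}N\cdot N\ovl{s_i}N=N\ovl{ws_i}N$ for $l(ws_i)=l(w)+1$ (proved by splitting $N=(N\cap w^{-1}Nw)(N\cap w^{-1}N_-w)$ and noting that length-additivity keeps the relevant positive roots positive under $w'^{-1}$); merely invoking $B\ovl{u}B=N\ovl{u}N\cdot H$ does not by itself force the $H$-component to be trivial, as you yourself flag.

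The genuine gap is the final step. You deduce openness from ``$\dim L^{u,e}=n$'' and ``the irreducibility of $L^{u,e}$,'' but neither is available at that point: $L^{u,e}$ is an intersection of two irreducible varieties, which need not be irreducible nor of the expected dimension, and in \cite{B-Z:2001} these facts are \emph{consequences} of the very parametrization you are constructing. The repair stays inside your own framework: writing $x=n_1\ovl{u}hn_2$ with $n_1\in N\cap uN_-u^{-1}$ one has $\Delta_{u\Lambda_i,\Lambda_i}(x)=h^{\Lambda_i}$, so $N\ovl{u}N\cap BuB$ is exactly the locus where all $\Delta_{u\Lambda_i,\Lambda_i}=1$; since $\Delta_{u\Lambda_i,\Lambda_i}(ag)=a^{u\Lambda_i}\Delta_{u\Lambda_i,\Lambda_i}(g)$ and $a\mapsto(a^{u\Lambda_i})_i$ is an isomorphism $H\to(\mathbb{C}^{\times})^r$ (the $u\Lambda_i$ form a $\mathbb{Z}$-basis of $P$), each $c\in(\mathbb{C}^{\times})^n$ admits a unique $a(c)\in H$ with $x^G_{\mathbf{i}}(a(c);c)\in L^{u,e}$. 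Because $t\mapsto s(t)$ is bijective, the image of $\psi$ is precisely the graph $\{(a(c),c)\}$, hence $x^L_{\mathbf{i}}\bigl((\mathbb{C}^{\times})^n\bigr)=L^{u,e}\cap U$, which is open in $L^{u,e}$ with no appeal to irreducibility or dimension counts. (The same computation yields $G^{u,e}\cong H\times L^{u,e}$, which retroactively justifies the irreducibility and dimension claims you wanted to use.)
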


We define a map
$\ovl{x}^G_{\textbf{i}}:H\times(\mathbb{C}^{\times})^{n}\rightarrow
G^{u,e}$ as
\[ \ovl{x}^G_{\textbf{i}}(a;t_1,\cdots,t_n)
=ax^L_{{\rm \bf{i}}}(t_1,\cdots,t_n), \]
where $a\in H$ and $(t_1,\cdots,t_n)\in (\mathbb{C}^{\times})^{n}$. In \cite{KaN:2015, KaN2:2016, Ka1:2016}, we have proven the following proposition in the case $G$ is of type ${\rm A}_r$, ${\rm B}_r$ and ${\rm C}_r$. Similarly, we can prove it in the case $G$ is type ${\rm D}_r$. 

\begin{prop}\label{gprime}
In the above setting, the map $\ovl{x}^G_{{\rm \bf{i}}}$ is a biregular isomorphism between $H\times(\mathbb{C}^{\times})^{n}$ and a Zariski open subset of $G^{u,e}$.
\end{prop}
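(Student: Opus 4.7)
The strategy is to reduce Proposition~\ref{gprime} to Theorem~\ref{fp} by exhibiting an explicit biregular automorphism $\phi$ of $H\times(\mathbb{C}^{\times})^{n}$ such that $\ovl{x}^G_{\bfi} = x^G_{\bfi}\circ\phi$. The argument parallels the ones employed for types ${\rm A}_r$, ${\rm B}_r$, ${\rm C}_r$ in \cite{KaN:2015,KaN2:2016,Ka1:2016}, and, since the sole input is the commutation rule
$$\alpha_i^{\vee}(t)\,y_j(s)\,\alpha_i^{\vee}(t)^{-1} \;=\; y_j(t^{-a_{ij}} s)$$
(which follows from $\Ad(\alpha_i^{\vee}(t))f_j = t^{-a_{ij}} f_j$ and hence depends only on the Cartan matrix entries), the same argument carries over to type ${\rm D}_r$ verbatim once the ${\rm D}_r$ Cartan matrix is inserted.

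The first step is to rewrite $x^L_{\bfi}(t_1,\ldots,t_n) = x_{-i_1}(t_1)\cdots x_{-i_n}(t_n)$. Expanding each factor $x_{-i_k}(t_k) = y_{i_k}(t_k)\alpha_{i_k}^{\vee}(t_k^{-1})$ and using the above commutation rule to first sweep every $\alpha$-factor to the right of the product, and then conjugate the resulting torus element back to the left, a straightforward induction on $n$ yields
$$x^L_{\bfi}(t_1,\ldots,t_n) \;=\; \eta(t_1,\ldots,t_n)\cdot y_{i_1}(\sigma_1)\cdots y_{i_n}(\sigma_n),$$
where $\eta(t_1,\ldots,t_n) := \prod_{k=1}^{n}\alpha_{i_k}^{\vee}(t_k^{-1}) \in H$ and each $\sigma_\ell = \sigma_\ell(t_1,\ldots,t_n)$ is a Laurent monomial. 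The essential structural fact -- the engine of the proof -- is that $\sigma_\ell$ has the explicit shape $\sigma_\ell = t_\ell^{-1}\prod_{k>\ell} t_k^{-a_{i_k,i_\ell}}$, so that the exponent matrix of $(\sigma_1,\ldots,\sigma_n)$ with respect to $(t_1,\ldots,t_n)$ is upper triangular with $-1$'s on the diagonal.

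Setting $\phi(a;t_1,\ldots,t_n) := (a\cdot\eta(t_1,\ldots,t_n);\sigma_1,\ldots,\sigma_n)$, one has $\ovl{x}^G_{\bfi}(a;t_1,\ldots,t_n) = x^G_{\bfi}(\phi(a;t_1,\ldots,t_n))$ by construction. The triangularity above forces the monomial map $(t_1,\ldots,t_n)\mapsto(\sigma_1,\ldots,\sigma_n)$ to have determinant $(-1)^n$, hence to be a biregular automorphism of $(\mathbb{C}^{\times})^n$, while the $H$-coordinate $a\mapsto a\cdot\eta(t_1,\ldots,t_n)$ is plainly biregular in $a$ for each fixed $(t_1,\ldots,t_n)$. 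Thus $\phi$ is a biregular automorphism of $H\times(\mathbb{C}^{\times})^n$, and combining this with Theorem~\ref{fp} yields that $\ovl{x}^G_{\bfi} = x^G_{\bfi}\circ\phi$ is itself a biregular isomorphism from $H\times(\mathbb{C}^{\times})^n$ onto the same Zariski open subset of $G^{u,e}$ as the image of $x^G_{\bfi}$. The main (though routine) obstacle is the bookkeeping induction that pins down the explicit form of $\sigma_\ell$ and, in particular, the upper triangular shape of its exponent matrix; once this is in place the rest of the proof is formal, and because the commutation rule used is type-independent, no additional ingredient specific to type ${\rm D}_r$ is required beyond what has already been carried out for types ${\rm A}$, ${\rm B}$, ${\rm C}$ in \cite{KaN:2015,KaN2:2016,Ka1:2016}.
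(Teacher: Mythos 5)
Your proof is correct and is essentially the argument the paper relies on: the paper gives no in-text proof of Proposition \ref{gprime}, instead citing \cite{KaN:2015,KaN2:2016,Ka1:2016}, where the same reduction to Theorem \ref{fp} via the torus commutation rule and a unimodular monomial change of coordinates is carried out for types ${\rm A}_r$, ${\rm B}_r$, ${\rm C}_r$. Your observation that the computation depends only on the Cartan matrix entries, and hence transfers verbatim to type ${\rm D}_r$, is exactly the content of the paper's remark that the type ${\rm D}_r$ case is proved ``similarly.''
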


\section{Cluster algebras and generalized minors}\label{CluSect}
Following \cite{A-F-Z:2005,F-Z:1998,FZ2:2002,M-M-A:2010}, we review the definitions of cluster algebras and their generators called cluster variables. It is known that the coordinate rings of double Bruhat cells have cluster algebra structures, and generalized minors are their cluster variables \cite{A-F-Z:2005, GY:2016}. We will refer to a relation between certain cluster variables on double Bruhat cells and crystal bases in Sect.\ref{gmc}.

We set $[1,l]:=\{1,2,\cdots,l\}$ and $[-1,-l]:=\{-1,-2,\cdots,-l\}$ for $l\in \mathbb{Z}_{>0}$. For $n,m\in \mathbb{Z}_{>0}$, let $x_1, \cdots ,x_n,x_{n+1}, \cdots
,x_{n+m}$ be commuting variables and $\mathcal{P}$ be a free multiplicative
abelian group generated by $x_{n+1},\cdots,x_{n+m}$. We set ${\mathbb
Z}\cP:={\mathbb Z}[x_{n+1}^{\pm1}, \cdots ,x_{n+m}^{\pm1}]$. Let $\cF:=\mathbb{C}(x_{1}, \cdots ,x_{n},x_{n+1},\cdots,x_{n+m})$ 
be the field of rational functions.

\subsection{Cluster algebras of geometric type}

In this subsection, we recall the definitions of cluster algebras. Let $\tilde{B}=(b_{ij})_{1\leq i\leq n+m,\ 1\leq j \leq n}$ be an $(n+m)\times
n$ integer matrix. The {\it principal part} $B$ of $\tilde{B}$ is obtained from $\tilde{B}$ by deleting the last $m$ columns. For $\tilde{B}$ and $k\in [1,n]$, the new $(n+m)\times n$ integer matrix $\mu_k(\tilde{B})=(b'_{ij})$ is defined by
\[b_{ij}':=
\begin{cases}
	-b_{ij} & {\rm if}\ i=k\ {\rm or}\ j=k, \\
	b_{ij}+\frac{|b_{ik}|b_{kj}+b_{ik}|b_{kj}|}{2} & {\rm otherwise}.
\end{cases}
\]
One calls $\mu_k(\tilde{B})$ the {\it matrix mutation} in direction $k$ of $\tilde{B}$. If there exists a positive 
integer diagonal matrix $D$ such that $DB$ is skew symmetric, we say $B$ is {\it skew symmetrizable}. It is easily verified that if $\tilde{B}$ has a skew symmetrizable principal part then $\mu_k(\tilde{B})$ also has a skew symmetrizable principal part {\cite[Proposition\ 3.6]{M-M-A:2010}}. 
We can also verify that $\mu_k\mu_k(\tilde{B})=\tilde{B}$. Define $\textbf{x}:=(x_1,\cdots,x_{n+m})$ and we call the pair $(\textbf{x}, \tilde{B})$ {\it initial seed}. For $1\leq k\leq n$, a new cluster variable $x_k'$ is defined by
\[ x_k x_k' = 
\prod_{1\leq i \leq n+m,\ b_{ik}>0} x_i^{b_{ik}}
+\prod_{1\leq i \leq n+m,\ b_{ik}<0} x_i^{-b_{ik}}. \]
Let $\mu_k(\textbf{x})$ be the set of variables obtained from $\textbf{x}$ by replacing $x_k$ by $x'_k$. Ones call the pair $(\mu_k(\textbf{x}), \mu_k(\tilde{B}))$ the {\it mutation} in direction $k$ of the seed $(\textbf{x}, \tilde{B})$.

Now, we can repeat this process of mutation and obtain a set of 
seeds inductively. 
Hence, each seed consists of an $(n+m)$-tuple of variables and a matrix. 
Ones call this $(n+m)$-tuple and matrix 
{\it cluster} and {\it exchange matrix} respectively. 
Variables in cluster are called {\it cluster variables}.

\begin{defn}{\cite{F-Z:1998, M-M-A:2010}}\label{clusterdef}
Let $\tilde{B}$ be an integer matrix whose principal part is 
skew symmetrizable and $\Sigma=(\textbf{x},\tilde{B})$ a seed. 
We set ${\mathbb A}:={\mathbb Z}\cP$. 
The cluster algebra (of geometric type) 
$\cA=\cA(\Sigma)$ over $\mathbb A$ associated with
seed $\Sigma$ is defined as an ${\mathbb A}$-subalgebra of 
$\cF$ generated by all cluster variables in all 
seeds which can be obtained from $\Sigma$ by sequences of mutations.
\end{defn}



\subsection{Cluster algebra ${\cA}({\rm \bf{i}})$}
\label{UCACA}

Let $G$ be a simple classical algebraic group, 
$\ge:={\rm Lie}(G)$ and $A=(a_{i,j})$ be its Cartan matrix. 
In Definition \ref{redworddef}, 
we define a reduced word ${\rm \bf{i}}=(i_1,\cdots,i_{l(u)})$ 
for an element $u$ of Weyl group $W$. 
In this subsection, we define the cluster algebra ${\cA}({\rm \bf{i}})$, 
which obtained from ${\rm \bf{i}}$.
It satisfies that ${\cA}({\rm \bf{i}})\otimes \mathbb{C}$ is 
isomorphic to the coordinate ring $\mathbb{C}[G^{u,e}]$ 
of the double Bruhat cell \cite{A-F-Z:2005,GY:2016}. Indeed, in \cite{A-F-Z:2005}, it is shown that
$\mathbb{C}[G^{u,e}]$ holds the structure of an upper cluster algebra and 
in \cite{GY:2016} it possesses the structure of a cluster algebra.
Let $i_k$ $(k\in[1,l(u)])$ be the $k$-th index of ${\rm \bf{i}}$ from the left. For $t\in [-1,-r]$, we set $i_t:=t$.

For $k\in[-1,-r]\cup[1,l(u)]$, 
we denote by $k^+$ the smallest index $l$ such that $k<l$ and $|i_l|=|i_k|$. 
For example, if ${\rm \bf{i}}=(1,2,3,1,2)$ then, 
$1^+=4$, $2^+=5$ and $3^+$ is not defined. 
We define a set e({\rm \bf{i}}) as 
\[ e({\rm \bf{i}}):=\{k\in[1,l(u)]| k^+\ {\rm is\ well}-{\rm defined}\}.  
\]
Following \cite{A-F-Z:2005}, 
we define a quiver $\Gamma_{{\rm \bf{i}}}$ as follows. 
The vertices of $\Gamma_{{\rm \bf{i}}}$ are the numbers $[-1,-r]\cup[1,l(u)]$. 
For two vertices $k\in [-1,-r]\cup[1,l(u)]$ and $l\in[1,l(u)]$ with $k<l$, 
there exists an arrow $k\rightarrow l$ (resp. $l\rightarrow k$) 
if and only if $l=k^+$ (resp. $l<k^+<l^+$ and $a_{i_k,i_l}<0$). 
For $k,\ l\in [-1,-r]$, there exists an arrow $k\rightarrow l$ 
if and only if $l<l^+<k^+$ and $a_{i_k,i_l}<0$. 
Next, let us define a matrix $\tilde{B}=\tilde{B}({\rm \bf{i}})$. 
\begin{defn}
Let $\tilde{B}({\rm \bf{i}})$ be an integer matrix with 
rows labelled by all the indices in $[-1,-r]\cup [1,l(u)]$ 
and columns labelled by all the indices in $e({\rm \bf{i}})$. 
For $k\in[-1,-r]\cup [1,l(u)]$ and $l\in e({\rm \bf{i}})$, 
an entry $b_{kl}$ of $\tilde{B}({\rm \bf{i}})$ is determined as follows: 
If there exists an arrow $k\rightarrow l$ 
(resp. $l\rightarrow k$) in $\Gamma_{{\rm \bf{i}}}$, then
\[
b_{kl}:=\begin{cases}
		1\ ({\rm resp.}\ -1)& {\rm if}\ |i_k|=|i_l|, \\
		-a_{|i_k||i_l|}\ ({\rm resp.}\ a_{|i_k||i_l|})& {\rm if}\ |i_k|\neq|i_l|.
	\end{cases}
\]
If there exist no arrows between $k$ and $l$, we set $b_{kl}=0$.
\end{defn}

\begin{prop}\label{propss}{\cite[Proposition\ 2.6]{A-F-Z:2005}}
$\tilde{B}({\rm \bf{i}})$ is skew symmetrizable. 
\end{prop}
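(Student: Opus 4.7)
The plan is to produce a positive-integer diagonal matrix $D$ such that $D\cdot B(\mathbf{i})$ is skew-symmetric, where $B(\mathbf{i})$ is the principal part of $\tilde B(\mathbf{i})$ (the submatrix with both rows and columns indexed by $e(\mathbf{i})\subset[1,l(u)]$). Since the Cartan matrix $A=(a_{ij})$ is symmetrizable, I may fix positive integers $d_1,\ldots,d_r$ satisfying $d_i\,a_{ij}=d_j\,a_{ji}$ for all $i,j\in I$. I would then set $D:=\diag(d_{|i_k|})_{k\in e(\mathbf{i})}$ and verify directly from the definition of $\tilde B(\mathbf{i})$ that $d_{|i_k|}\,b_{kl}+d_{|i_l|}\,b_{lk}=0$ for every $k,l\in e(\mathbf{i})$.

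The verification is a case analysis on a pair $k<l$ in $e(\mathbf{i})$. If no arrow of $\Gamma_{\mathbf{i}}$ joins $k$ and $l$, then $b_{kl}=b_{lk}=0$ and there is nothing to check. Otherwise, the two conditions ``$l=k^{+}$'' and ``$l<k^{+}<l^{+}$ with $a_{i_k,i_l}<0$'' are mutually exclusive, so exactly one of the arrows $k\to l$ or $l\to k$ appears. In the first case one necessarily has $|i_k|=|i_l|$, hence $d_{|i_k|}=d_{|i_l|}$, and the definition gives $b_{kl}=1$, $b_{lk}=-1$, whence the required identity. In the second case $|i_k|\neq|i_l|$, and applying the definition of the matrix entry with the roles of $k$ and $l$ exchanged yields $b_{kl}=a_{|i_k||i_l|}$ and $b_{lk}=-a_{|i_l||i_k|}$ (the signs being forced by the arrow direction), so the symmetrizer relation $d_{|i_k|}\,a_{|i_k||i_l|}=d_{|i_l|}\,a_{|i_l||i_k|}$ gives $d_{|i_k|}\,b_{kl}+d_{|i_l|}\,b_{lk}=0$ immediately.

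I do not expect any serious obstacle: the statement is essentially a translation of the symmetrizability of $A$ through the bookkeeping of $\Gamma_{\mathbf{i}}$. The only careful point is to confirm that the rule determining $b_{kl}$ is applied consistently when $k$ and $l$ are swapped, so that the signs in the ``resp.'' cases line up correctly. A mild simplification is that restricting to the principal part removes the need to examine the rule for two indices in $[-1,-r]$, since both row and column indices of $B(\mathbf{i})$ lie in $e(\mathbf{i})\subset[1,l(u)]$; thus only the first clause in the definition of $\Gamma_{\mathbf{i}}$ is actually used in the verification.
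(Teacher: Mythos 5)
Your proof is correct. The paper gives no argument of its own for this proposition---it is quoted directly from \cite[Proposition 2.6]{A-F-Z:2005}---and your approach, taking $D=\mathrm{diag}(d_{|i_k|})_{k\in e({\rm \bf{i}})}$ with the $d_i$ symmetrizing the Cartan matrix and checking the two mutually exclusive arrow cases ($l=k^+$ forcing $|i_k|=|i_l|$, versus $a_{i_k,i_l}<0$ forcing $|i_k|\neq|i_l|$), is exactly the standard verification underlying the cited result.
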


\begin{defn}
By Proposition \ref{propss}, we can construct a cluster algebra from the 
matrix $\tilde{B}({\rm \bf{i}})$ by applying mutations. 
We denote this cluster algebra by $\cA({\rm \bf{i}})$.
\end{defn}

\subsection{Generalized minors and bilinear form}\label{bilingen}

Set ${\cA}({\rm \bf{i}})_{\mathbb{C}}:={\cA}({\rm \bf{i}})\otimes \mathbb{C}$. 
It is known that the coordinate ring $\mathbb{C}[G^{u,e}]$ 
of the double Bruhat cell is isomorphic to ${\cA}({\rm \bf{i}})_{\mathbb{C}}$ 
(Theorem \ref{clmainthm}). 
To describe this isomorphism explicitly, we need generalized minors.  

We set $G_0:=N_-HN$, and let $x=[x]_-[x]_0[x]_+$ with $[x]_-\in N_-$, $[x]_0\in H$, $[x]_+\in N$ be the corresponding decomposition. 

\begin{defn}
For $i\in[1,r]$ and $u'\in W$, the generalized minor $\Delta_{u\Lambda_i,\Lambda_i}$ is a regular function on $G$ whose restriction to the open set $u G_0$ is given by $\Delta_{u\Lambda_i,\Lambda_i}(x)=([u^{-1}x ]_0)^{\Lambda_i}$. Here, $\Lambda_i$ is the $i$-th  fundamental weight, for $a=t^h\in H$ $(h\in P^{*} )$ and $\lambda\in P$, we set $a^{\lambda}:=t^{\lambda(h)}$. In particular, we write $\Delta_{\Lambda_i}:=\Delta_{\Lambda_i,\Lambda_i}$ and call it a {\it principal minor}.
\end{defn}

We can calculate generalized minors by using a bilinear form in the fundamental representation of $\ge={\rm Lie}(G)$ (Sect.\ref{SectFund}). Let $x_i(t)$, $y_i(t)$ be the ones in Sect.\ref{DBCs} (\ref{xiyidef}), and $\omega:\ge\to\ge$ be the anti-involution 
\[
\omega(e_i)=f_i,\q
\omega(f_i)=e_i,\q \omega(h)=h,
\] and extend it to $G$ by setting
$\omega(x_i(c))=y_{i}(c)$, $\omega(y_{i}(c))=x_i(c)$ and $\omega(t)=t$
$(t\in H)$. There exists a $\ge$ (or $G$)-invariant bilinear form on the
fundamental representation $V(\Lm_i)$ of $\ge$ such that 
\[
 \lan au,v\ran=\lan u,\omega(a)v\ran,
\q\q(u,v\in V(\lm),\,\, a\in \ge\ (\text{or }G)).
\]
For $g\in G$, 
we have the following simple fact:
\[
 \Del_{\Lm_i}(g)=\lan gu_{\Lm_i},u_{\Lm_i}\ran,
\]
where $u_{\Lm_i}$ is a properly normalized highest weight vector in
$V(\Lm_i)$. Hence, for $w\in W$, we have
\begin{equation}\label{minor-bilin}
 \Del_{w\Lm_i,\Lm_i}(g)=
\Del_{\Lm_i}({\ovl w}^{-1}g)=
\lan {\ovl w}^{-1}g\cdot u_{\Lm_i},u_{\Lm_i}\ran
=\lan g\cdot u_{\Lm_i}\, ,\, \ovl{w}\cdot u_{\Lm_i}\ran,
\end{equation}
where $\ovl w$ is the one we defined in Sect.\ref{DBCs} (\ref{smpl}), and note that $\omega(\ovl s_i^{\pm})=\ovl s_i^{\mp}$.

\subsection{Cluster algebras on Double Bruhat cells}
\label{CADBC}

For $u=s_{i_1}s_{i_2}\cdots s_{i_n}$ and $k\in [1,l(u)]$, we set
\begin{equation}\label{inc}
u_{\leq k}=u_{\leq k}({\rm \bf{i}}):=s_{i_1}s_{i_2}\cdots s_{i_k}.
\end{equation}
For $k \in [-1,-r]$, we set $u_{\leq k}:=e$. For $k \in [-1,-r]\cup [1,\ l(u)]$, we define $\Delta(k;{\rm \bf{i}})(x):=\Delta_{u_{\leq k} \Lambda_{|i_k|},\Lambda_{|i_k|}}(x)$. 

We set $F({\rm \bf{i}}):=\{ \Delta(k;{\rm \bf{i}})(x)|k \in [-1,-r]\cup[1,\ l(u)] \}$. It is known that the set $F({\rm \bf{i}})$ is an algebraically independent generating set for the field of rational functions $\mathbb{C}(G^{u,e})$ \cite[Theorem 1.12]{F-Z:1998}. Then, we have the following.

\begin{thm}\label{clmainthm}{\cite{A-F-Z:2005,GY:2016}}
The isomorphism of fields 
$\varphi :\mathcal{F} \rightarrow \mathbb{C}(G^{u,e})$ 
defined by $\varphi (x_k)=\Delta(k;{\rm \bf{i}})
\ (k \in [-1,-r]\cup [1,l(u)] )$ 
restricts to an isomorphism of algebras 
${\cA}({\rm \bf{i}})_{\mathbb{C}}\rightarrow \mathbb{C}[G^{u,e}]$.
\end{thm}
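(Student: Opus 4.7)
The plan is to assemble the theorem from \cite{A-F-Z:2005} and \cite{GY:2016}. First, I would check that $\varphi$ is well-defined as a field isomorphism. By \cite[Theorem 1.12]{F-Z:1998} the set $F(\mathbf{i})$ is an algebraically independent generating set of $\mathbb{C}(G^{u,e})$, and its cardinality equals $r + l(u) = \dim G^{u,e}$ (cf.\ Theorem \ref{fp}), which agrees with the transcendence degree of $\mathcal{F}$. Consequently the assignment $x_k \mapsto \Delta(k;\mathbf{i})$ extends uniquely to a $\mathbb{C}$-algebra isomorphism $\varphi: \mathcal{F} \xrightarrow{\sim} \mathbb{C}(G^{u,e})$.

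Next I would invoke the main result of \cite{A-F-Z:2005}, where it is established that $\varphi$ restricts to an isomorphism from the upper cluster algebra $\overline{\mathcal{A}}(\mathbf{i})_{\mathbb{C}}$ onto $\mathbb{C}[G^{u,e}]$. The engine of that argument is the family of generalized minor exchange relations: for each mutable index $k \in e(\mathbf{i})$, the product $\Delta(k;\mathbf{i})\,\Delta(k^+;\mathbf{i})$ decomposes as a sum of two monomials in the neighbors of $k$ in the quiver $\Gamma_{\mathbf{i}}$, matching precisely the exchange relation prescribed by $\tilde{B}(\mathbf{i})$. Combined with the Laurent phenomenon and a careful characterization of which Laurent expressions in the initial seed extend to regular functions on $G^{u,e}$ (using the factorization $x^G_{\mathbf{i}}$ of Theorem \ref{fp}), this yields the isomorphism at the level of the upper cluster algebra.

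Finally I would apply the theorem of \cite{GY:2016}, which asserts the equality $\mathcal{A}(\mathbf{i})_{\mathbb{C}} = \overline{\mathcal{A}}(\mathbf{i})_{\mathbb{C}}$ for the cluster structures arising from double Bruhat cells. Composing this with the isomorphism of the previous paragraph upgrades the conclusion to $\mathcal{A}(\mathbf{i})_{\mathbb{C}} \xrightarrow{\sim} \mathbb{C}[G^{u,e}]$, which is what we want.

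The main obstacle is unquestionably this last step: in general $\mathcal{A}(\mathbf{i})_{\mathbb{C}}$ is strictly contained in $\overline{\mathcal{A}}(\mathbf{i})_{\mathbb{C}}$, and the equality is the technical heart of \cite{GY:2016}. Their strategy is to realize the quantized coordinate ring $\mathbb{C}_q[G^{u,e}]$ as a symmetric Cauchon--Goodearl--Letzter extension, then apply their general recognition theorem for CGL extensions to produce an explicit quantum cluster structure with the right initial seed. An explicit sequence of quantum cluster mutations, constructed from the CGL data, exhibits every generalized minor as a polynomial (not merely a Laurent polynomial) in some other cluster, and specializing $q\to 1$ delivers the classical statement. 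A direct classical proof bypassing the quantum machinery would be considerably harder, since even for small rank one must exhibit explicit mutation sequences resolving each non-initial generalized minor.
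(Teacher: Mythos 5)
Your proposal is correct and follows exactly the route the paper itself takes: Theorem \ref{clmainthm} is quoted from the literature, with \cite{A-F-Z:2005} supplying the upper cluster algebra isomorphism $\overline{\cA}({\rm \bf{i}})_{\mathbb{C}}\cong\mathbb{C}[G^{u,e}]$ and \cite{GY:2016} upgrading it to the cluster algebra $\cA({\rm \bf{i}})_{\mathbb{C}}$, just as you describe. Your additional remarks on the well-definedness of $\varphi$ via \cite[Theorem 1.12]{F-Z:1998} and on the CGL-extension strategy of Goodearl--Yakimov are accurate glosses on the cited sources rather than a departure from the paper's approach.
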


\section{Monomial realizations and Demazure crystals}

As mentioned in the beginnings of Sect.\ref{DBCs} and \ref{CluSect}, 
our findings are relations between generalized minors 
on double Bruhat cells and crystal bases. 
More precisely, we shall describe generalized minors in terms of 
the {\it monomial realizations} of Demazure crystals. 
Let us recall these notion in this section. 
Let $\ge$ be a complex simple Lie algebra 
and we will use the same notation as in Sect. \ref{SectFund}.

\subsection{Monomial realizations of crystals}

In this subsection, we review the monomial realizations of 
crystals~\cite{K2:1991,K:2003,Nj:2003}. First, let us recall the crystals.

\begin{defn}\label{defcry}\cite{K0:1990}
A~{\it crystal} associated with the Cartan matrix~$A$ is a~set~$B$ 
together with the maps $\text{wt}: B \rightarrow P$,
$\tilde{e_{i}}$, $\tilde{f_{i}}: B \cup \{0\} 
\rightarrow B \cup \{0\}$ and $\varepsilon_i$,
$\varphi_i: B \rightarrow {\mathbb Z} \cup \{-\infty \}$, 
$i \in I$, satisfying some properties.
\end{defn}
We call $\{\tilde{e}_i,\ \tilde{f}_i\}_{i\in I}$ the {\it Kashiwara operators}. 
Let $U_q(\mathfrak g)$ be the quantum enveloping algebra 
\cite{K0:1990} associated with the Cartan matrix~$A$, that is,  
$U_q(\mathfrak g)$ has generators $\{e_i,\ f_i,\ h_i |\ i\in I\}$ over 
$\mathbb{C}(q)$ satisfying some relations, 
where $q$ is an indeterminate. Let $V(\lm)$ 
($\lm\in P^+=\oplus_{i\in I}\mathbb{Z}_{\geq0}\Lambda_i$) 
be the finite dimensional irreducible representation of 
$U_q(\mathfrak g)$ which has the highest weight vector $u_{\lm}$, 
and $(L(\lm),B(\lm))$ be the crystal base of $V(\lm)$. 
The crystal $B(\lm)$ has a crystal structure.

Let us introduce monomial realizations of crystals which realize each element of 
crystals as a~certain Laurent monomial. 
We define a~set of integers $p=(p_{j,i})_{j,i \in I,\; j \neq i}$ such that
\begin{gather*}
p_{j,i}=
\begin{cases}
1 & \text{if} \quad  j<i,
\\
0 & \text{if} \quad  i<j.
\end{cases}
\end{gather*}
For doubly-indexed variables 
$\{Y_{s,i} \,|\, i \in I$, $s\in \mathbb{Z}\}$, 
we def\/ine the set of monomials
\begin{gather*}
{\mathcal Y}:=\left\{Y=\prod\limits_{s \in \mathbb{Z},\ i \in I}
Y_{s,i}^{\zeta_{s,i}}\, \Bigg| \,\zeta_{s,i} \in \mathbb{Z},\
\zeta_{s,i} =0~\text{except for f\/initely many}~(s,i) \right\}.
\end{gather*}

Finally, we def\/ine maps $\text{wt}: {\mathcal Y} \rightarrow P$, $\varepsilon_i$, $\varphi_i: {\mathcal Y} \rightarrow
\mathbb{Z}$, $i \in I$. 
For $Y=\prod\limits_{s \in \mathbb{Z},\; i \in I} Y_{s,i}^{\zeta_{s,i}}\in {\mathcal Y}$, 
let
\begin{gather}\label{wtph}
\text{wt}(Y):= \sum\limits_{i,s}\zeta_{s,i}\Lambda_i,\!
\quad
\varphi_i(Y):=\max\left\{\! \sum\limits_{k\leq s}\zeta_{k,i}  \,|\, s\in \mathbb{Z} \!\right\},\!
\quad
\varepsilon_i(Y):=\varphi_i(Y)-\text{wt}(Y)(h_i).
\end{gather}
We set
\begin{gather}
\label{asidef}
A_{s,i}:=Y_{s,i}Y_{s+1,i}\prod\limits_{j\neq i}Y_{s+p_{j,i},j}^{a_{j,i}}
\end{gather}
and def\/ine the Kashiwara operators as follows
\begin{gather*}
\tilde{f}_iY=
\begin{cases}
A_{n_{f_i},i}^{-1}Y & \text{if} \quad  \varphi_i(Y)>0,
\\
0 & \text{if} \quad  \varphi_i(Y)=0,
\end{cases}
\qquad
\tilde{e}_iY=
\begin{cases}
A_{n_{e_i},i}Y & \text{if} \quad  \varepsilon_i(Y)>0,
\\
0 & \text{if} \quad  \varepsilon_i(Y)=0,
\end{cases}
\end{gather*}
where
\begin{gather*}
n_{f_i}:=\min \left\{n \,\Bigg|\, \varphi_i(Y)= \sum\limits_{k\leq n}\zeta_{k,i}\right\},
\qquad
n_{e_i}:=\max \left\{n \,\Bigg|\, \varphi_i(Y)= \sum\limits_{k\leq n}\zeta_{k,i}\right\}.
\end{gather*}

Then the following theorem holds:


\begin{thm}[\cite{K:2003,Nj:2003}]\label{monorealmain}\quad

\begin{itemize}
\item[(i)] For the set $p=(p_{j,i})$ as above, 
$({\mathcal Y}, \text{wt}, \varphi_i, \varepsilon_i,\tilde{f}_i,
\tilde{e}_i)_{i\in I}$ is a~crystal.
When we emphasize~$p$, we write ${\mathcal Y}$ as ${\mathcal Y}(p)$.
\item[(ii)] If a~monomial $Y \in {\mathcal Y}(p)$ 
satisfies $\varepsilon_i(Y)=0$ $($resp.~$\varphi_i(Y)=0)$ 
for all $i \in I$,
then the connected component containing~$Y$ is isomorphic to $B(\text{wt}(Y))$ 
$($resp.\ $B(w_0\cdot\text{wt}(Y))$, where $w_0$ is the longest element of $W)$.
\end{itemize}

\end{thm}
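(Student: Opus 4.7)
The plan is to address parts~(i) and~(ii) separately. For part~(i), I would verify each crystal axiom by direct computation from the definitions of $\wt$, $\varepsilon_i$, $\varphi_i$, $\tilde e_i$ and $\tilde f_i$ given in \eqref{wtph}--\eqref{asidef}. The linchpin is the identity $\wt(A_{s,i})=\alpha_i$, which follows from $a_{i,i}=2$ together with $\alpha_i=\sum_{j}a_{j,i}\Lambda_j$ (the latter being dual to $\alpha_j(h_i)=a_{ij}$). Consequently $\wt(\tilde f_iY)=\wt(Y)-\alpha_i$ and $\wt(\tilde e_iY)=\wt(Y)+\alpha_i$. The relation $\varphi_i(Y)-\varepsilon_i(Y)=\wt(Y)(h_i)$ is forced by \eqref{wtph}, since $\wt(Y)(h_i)=\sum_{s}\zeta_{s,i}$. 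The identity $\varphi_i(\tilde f_iY)=\varphi_i(Y)-1$ reduces to a three-case analysis of the partial sums $\sum_{k\leq s}\zeta_{k,i}$ for $s<n_{f_i}$, $s=n_{f_i}$, and $s>n_{f_i}$: the factor $A_{n_{f_i},i}^{-1}$ leaves the sum unchanged for $s<n_{f_i}$, drops it by $1$ at $s=n_{f_i}$, and drops it by $2$ for $s>n_{f_i}$; the minimality of $n_{f_i}$ then guarantees that the new maximum equals $\varphi_i(Y)-1$ and is attained at $s=n_{f_i}$. The involutive identity $\tilde e_i\tilde f_iY=Y$ then follows after checking $n_{e_i}(\tilde f_iY)=n_{f_i}(Y)$.

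For part~(ii), the strategy is to invoke the uniqueness theorem for normal highest-weight crystals. Assume $\varepsilon_i(Y)=0$ for all $i\in I$; then $\tilde e_iY=0$ for every $i$, so $Y$ is a highest weight vertex of the connected component $\cC(Y)\subset\mathcal Y(p)$, of weight $\lambda:=\wt(Y)\in P^+$. First I would note that $\cC(Y)$ admits no other highest weight vertex: every vertex has weight in $\lambda-\sum_i\Zn\alpha_i$, so only weight $\lambda$ can be killed by all the $\tilde e_i$'s, and a second vertex of weight $\lambda$ in the kernel would lie in a different connected component. Next I would verify that $\cC(Y)$ is \emph{normal}, in the sense that each $i$-string is a genuine $\mathfrak{sl}_2$-string of the expected length and the rank-two subcrystals agree with those of $B(\Lambda_i+\Lambda_j)$. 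Granted normality together with uniqueness of the highest weight vertex, Kashiwara's theorem (\cite{K:2003}) identifies $\cC(Y)\simeq B(\lambda)$. The parenthetical case $\varphi_i(Y)=0$ is dual: $Y$ is then a lowest weight vertex, and the same argument with $\tilde e_i\leftrightarrow\tilde f_i$ yields $\cC(Y)\simeq B(w_0\cdot\wt(Y))$.

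The main obstacle is verifying normality of the monomial crystal. Following Nakajima (\cite{Nj:2003}), I would first exhibit each fundamental crystal $B(\Lambda_i)$ as a subcrystal of $\mathcal Y(p)$ by writing down its highest weight monomial and closing under the $\tilde f_j$'s; normality of these fundamental pieces can then be checked directly against the explicit realizations of Section~\ref{SectFund}. Multiplication of monomials induces an embedding $B(\Lambda_{i_1})\otimes\cdots\otimes B(\Lambda_{i_N})\hookrightarrow\mathcal Y(p)$, and the heart of the argument is to show that the Kashiwara tensor product rule for $\tilde e_i,\tilde f_i$ coincides with the rule read off from our prescription of $n_{e_i},n_{f_i}$ on a product of monomials. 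Once this matching is in place, $\cC(Y)\simeq B(\lambda)$ is obtained from the well-known fact that $B(\lambda)$ occurs as the connected component of the highest weight vertex inside $B(\Lambda_{i_1})\otimes\cdots\otimes B(\Lambda_{i_N})$ whenever $\lambda=\Lambda_{i_1}+\cdots+\Lambda_{i_N}$, finishing the proof.
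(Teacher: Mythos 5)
This theorem is quoted in the paper from \cite{K:2003,Nj:2003} and no proof of it appears in the text, so there is nothing internal to compare your argument against; I can only measure it against the standard proofs in those references. Your plan for part~(i) is the correct direct verification: the identity $\wt(A_{s,i})=\alpha_i$ (from $\alpha_i=\sum_j a_{j,i}\Lambda_j$) and the three-case analysis of the partial sums $\sum_{k\leq s}\zeta_{k,i}$ around $n_{f_i}$ are exactly what is needed, and the check $n_{e_i}(\tilde f_iY)=n_{f_i}(Y)$ closes the loop. For part~(ii), your final paragraph --- realizing the component of $Y$ inside a tensor product of fundamental crystals by showing that multiplication of monomials is compatible with Kashiwara's tensor product rule --- is precisely the mechanism used by Kashiwara and Nakajima, and it does deliver $\cC(Y)\cong B(\wt(Y))$.

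Two intermediate assertions in your second paragraph are not valid as stated, though they end up being harmless because the tensor-product embedding supersedes them. First, ``every vertex has weight in $\lambda-\sum_i\mathbb{Z}_{\geq0}\alpha_i$, so only weight $\lambda$ can be killed by all the $\tilde e_i$'s'' is a non sequitur: in an abstract crystal a vertex of weight strictly below $\lambda$ can perfectly well be annihilated by every $\tilde e_i$, so uniqueness of the highest weight vertex does not follow from weight considerations alone --- it is a \emph{consequence} of the eventual identification with $B(\lambda)$, not an input to it. Second, ``normality together with uniqueness of the highest weight vertex'' does not by itself characterize $B(\lambda)$ (outside special situations such as Stembridge's simply-laced axioms); the identification genuinely requires exhibiting $\cC(Y)$ as a connected component of a tensor product of crystals already known to be isomorphic to some $B(\mu)$, which is what your last paragraph does. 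If you restructure so that the strict morphism $B(\Lambda_{i_1})\otimes\cdots\otimes B(\Lambda_{i_N})\to\cY(p)$ carries the whole weight of the argument (taking care that an arbitrary $Y$ with all $\varepsilon_i(Y)=0$ must first be factored into monomials matching the tensor factors, possibly including weight-zero pieces), the proof is sound and agrees with the cited sources.
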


\begin{ex}\label{monorealex1}
Let us consider the case of type ${\rm C}_2$. By $(\ref{asidef})$, we get
\[ A_{s,1}=\frac{Y_{s,1}Y_{s+1,1}}{Y_{s,2}},\q  A_{s,2}
=\frac{Y_{s,2}Y_{s+1,2}}{Y^2_{s+1,1}}\qq (s\in\mathbb{Z}). \]
We set $Y:=Y_{0,2}\in \cY$. Following the definitions, 
we obtain {\rm wt}$(Y)=\Lm_2$, $\varphi_2(Y)=1$, 
$\varepsilon_2(Y)=\varphi_2(Y)-\Lm_2(h_2)=0$ and 
$\varphi_1(Y)=\varepsilon_1(Y)=0$. It follows from $n_{f_2}=0$ that
\[ \tilde{f}_2 Y=A^{-1}_{0,2}Y_{0,2}
=\frac{Y^2_{1,1}}{Y_{1,2}},\q \tilde{f}_1 Y=0.  \]
Similar to this, we also obtain
\[ \tilde{f}_1 \tilde{f}_2 Y=A^{-1}_{1,1}\frac{Y^2_{1,1}}{Y_{1,2}}
=\frac{Y_{1,1}}{Y_{2,1}},\q \tilde{f}_2 \tilde{f}_2 Y=0.  \]
Repeating this argument, we see that the connected component 
containing $Y=Y_{0,2}$ is as follows:
\begin{equation}\label{monorealdia1}
Y_{0,2}\overset{2}{\longrightarrow}\frac{Y^2_{1,1}}{Y_{1,2}}
\overset{1}{\longrightarrow}\frac{Y_{1,1}}{Y_{2,1}} 
\overset{1}{\longrightarrow}\frac{Y_{1,2}}{Y^2_{2,1}}
\overset{2}{\longrightarrow}\frac{1}{Y_{2,2}}.
\end{equation}
The graph $($\ref{monorealdia1}$)$ is
the monomial realization of the crystal base 
$B(\Lm_2)=B({\rm wt}(Y_{0,2}))$.
\end{ex}

\begin{ex}\label{monorealexspin1}
Let us consider the case of type ${\rm B}_3$. 
By $(\ref{asidef})$, we get
\[ A_{s,1}=\frac{Y_{s,1}Y_{s+1,1}}{Y_{s,2}},\q  
A_{s,2}=\frac{Y_{s,2}Y_{s+1,2}}{Y_{s+1,1}Y^2_{s,3}},\q 
A_{s,3}=\frac{Y_{s,3}Y_{s+1,3}}{Y_{s+1,2}},
\qq, \]
for $s\in\mathbb{Z}$.

We set $Y:=Y_{0,3}\in \cY$. Following the definitions, 
we obtain {\rm wt}$(Y)=\Lm_3$, $\varphi_3(Y)=1$, 
$\varepsilon_3(Y)=\varphi_3(Y)-\Lm_3(h_3)=0$ 
and $\varphi_i(Y)=\varepsilon_i(Y)=0$ $(i=1,2)$. It follows from $n_{f_3}=0$ that
\[ \tilde{f}_3 Y=A^{-1}_{0,3}Y_{0,3}=\frac{Y_{1,2}}{Y_{1,3}},\q \tilde{f}_i Y=0\ (i=1,2).  \]
Similar to this, we also obtain
\[ \tilde{f}_2 \tilde{f}_3 Y=\frac{Y_{1,3}Y_{2,1}}{Y_{2,2}},\q 
\tilde{f}_1\tilde{f}_2 \tilde{f}_3 Y=\frac{Y_{1,3}}{Y_{3,1}},\q 
\tilde{f}_3\tilde{f}_2 \tilde{f}_3 Y=\frac{Y_{2,1}}{Y_{2,3}}.
  \]
Repeating this argument, we see that 
the connected component containing $Y=Y_{0,3}$ is as follows:
\begin{equation}\label{monorealspindia1}
Y_{0,3}\overset{3}{\longrightarrow}\frac{Y_{1,2}}{Y_{1,3}}
\overset{2}{\longrightarrow}\frac{Y_{1,3}Y_{2,1}}{Y_{2,2}} 
\overset{1}{\longrightarrow}\frac{Y_{1,3}}{Y_{3,1}}
\qq \qq \qq \qq
\end{equation}
\[ \qq \downarrow\ ^3 \qq \q \downarrow\ ^3 \]
\[ \qq \qq \qq \qq \qq \frac{Y_{2,1}}{Y_{2,3}}
\overset{1}{\longrightarrow}\frac{Y_{2,2}}{Y_{2,3}Y_{3,1}}
\overset{2}{\longrightarrow} \frac{Y_{2,3}}{Y_{3,2}}
\overset{3}{\longrightarrow}\frac{1}{Y_{3,3}}.  \]
The graph $($\ref{monorealspindia1}$)$ is
the monomial realization of the crystal base 
$B(\Lm_3)=B({\rm wt}(Y_{0,3}))$.
\end{ex}

\subsection{Demazure crystals}\label{demazure}

For $w\in W$ and $\lambda\in P^+$, an {\it upper Demazure crystal}
$B^{+}(\lambda)_w\subset B(\lambda)$ is inductively defined as follows. 

\begin{defn}
Let $u_{\lambda}$ be the highest weight vector of $B(\lambda)$. For
 the identity element $e$ of $W$, we set
 $B^{+}(\lambda)_e:=\{u_{\lambda}\}$. 
For $w\in W$, if $s_iw<w$, 
\[ B^{+}(\lambda)_w:=\{\tilde{f}_i^kb\ |\ k\geq0,\ b\in
 B^{+}(\lambda)_{s_iw},\ \tilde{e}_ib=0\}\setminus \{0\}. 
\]
\end{defn}
Similarly, we define a {\it lower Demazure crystal}
$B^{-}(\lambda)_w$ inductively.
\begin{defn}
Let $v_{\lambda}$ be the lowest weight vector of $B(\lambda)$. 
We set $B^{-}(\lambda)_e:=\{v_{\lambda}\}$. For $w\in W$, 
if $s_iw<w$, 
\[ B^{-}(\lambda)_w:=\{\tilde{e}_i^kb\ |\ k\geq0,\ b\in
 B^{-}(\lambda)_{s_iw},\ \tilde{f}_ib=0\}\setminus \{0\}. 
\]
\end{defn}

\begin{thm}\label{kashidem}{\cite{K3:2002}}
For $w\in W$, let $w=s_{i_1}\cdots s_{i_n}$ be an arbitrary reduced
 expression. Let $u_{\lambda}$ $(resp.\ v_{\lambda})$ be the highest
 $(resp.\ lowest)$ weight vector of $B(\lambda)$. Then 
\begin{eqnarray*} 
&&B^{+}(\lambda)_w=\{\tilde{f}_{i_1}^{a(1)}\cdots
\tilde{f}_{i_n}^{a(n)}u_{\lambda}|a(1),\cdots,a(n)\in\mathbb{Z}_{\geq0}
\}
\setminus\{0\}, \\
&& B^{-}(\lambda)_w=\{\tilde{e}_{i_1}^{a(1)}\cdots
\tilde{e}_{i_n}^{a(n)}v_{\lambda}|a(1),\cdots,a(n)\in\mathbb{Z}_{\geq0}
 \}\setminus\{0\}. 
\end{eqnarray*}
\end{thm}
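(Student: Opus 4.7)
The plan is to prove the two displayed identities simultaneously by induction on $n=l(w)$, exploiting the symmetry $(\tilde{e}_i,u_\lambda)\leftrightarrow(\tilde{f}_i,v_\lambda)$ between the $B^+$ and $B^-$ versions. The base case $n=0$ is immediate: for $w=e$ both sides reduce to $\{u_\lambda\}$ (respectively $\{v_\lambda\}$), taking the empty product convention on the right.

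For the inductive step, write $w=s_{i_1}w'$ where $w'=s_{i_2}\cdots s_{i_n}$ is reduced of length $n-1$, so that $s_{i_1}w=w'<w$ in Bruhat order. The recursive definition yields
\[
B^+(\lambda)_w=\{\tilde{f}_{i_1}^k b\mid k\geq 0,\ b\in B^+(\lambda)_{w'},\ \tilde{e}_{i_1}b=0\}\setminus\{0\},
\]
and by induction $B^+(\lambda)_{w'}=\{\tilde{f}_{i_2}^{a(2)}\cdots\tilde{f}_{i_n}^{a(n)}u_\lambda\}\setminus\{0\}$. The inclusion $\supseteq$ in the claimed formula is then immediate. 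For the opposite inclusion, given a nonzero element of the form $\tilde{f}_{i_1}^{a(1)}b'$ with $b'=\tilde{f}_{i_2}^{a(2)}\cdots\tilde{f}_{i_n}^{a(n)}u_\lambda\in B^+(\lambda)_{w'}$, set $m:=\varepsilon_{i_1}(b')$ and $b:=\tilde{e}_{i_1}^{m}b'$; then $\tilde{e}_{i_1}b=0$ and $\tilde{f}_{i_1}^{a(1)}b'=\tilde{f}_{i_1}^{a(1)+m}b$, so the problem reduces to verifying $b\in B^+(\lambda)_{w'}$.

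This last verification is the heart of the matter and is captured by the following stability lemma, which must be proved in parallel with the main claim: \emph{if $s_{i}w'>w'$, then $\tilde{e}_{i}\bigl(B^+(\lambda)_{w'}\bigr)\subseteq B^+(\lambda)_{w'}\cup\{0\}$}. Proving this lemma is the main obstacle. The strategy is to choose a reduced expression $w'=s_{j_1}\cdots s_{j_{n-1}}$ with $j_1\neq i$, which is possible because $s_{i}w'>w'$ forbids $i$ from being a left descent of $w'$; peel off $s_{j_1}$ to write $w'=s_{j_1}w''$, and analyze how $\tilde{e}_{i}$ acts on the $\tilde{f}_{j_1}$-strings built from $B^+(\lambda)_{w''}$. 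When $a_{i,j_1}=0$ the operators $\tilde{e}_{i}$ and $\tilde{f}_{j_1}$ commute on $B(\lambda)$ and the conclusion is immediate from the inductive hypothesis applied to $w''$; when $a_{i,j_1}<0$ one invokes the rank-two relations among Kashiwara operators on $B(\lambda)$ to rewrite $\tilde{e}_{i}\tilde{f}_{j_1}^{k}$ in a form whose output stays inside $B^+(\lambda)_{w'}\cup\{0\}$, as in Kashiwara's original argument \cite{K3:2002}. Combined with the induction on $l(w')$, this establishes the Key Lemma and closes the outer induction. The $B^{-}$ statement follows by the analogous argument after interchanging $\tilde{e}_{i}\leftrightarrow\tilde{f}_{i}$ and $u_{\lambda}\leftrightarrow v_{\lambda}$.
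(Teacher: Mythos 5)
The paper does not prove this statement at all: Theorem \ref{kashidem} is quoted from Kashiwara \cite{K3:2002} and used as a black box, so there is no in-paper argument to compare against. Your outer induction is the right skeleton and is essentially how any proof must begin: the base case is trivial, one inclusion follows immediately from the recursive definition together with the inductive hypothesis (note, by the way, that you have the labels $\subseteq$ and $\supseteq$ swapped --- the ``immediate'' direction is $B^{+}(\lambda)_w\subseteq\{\tilde f_{i_1}^{a(1)}\cdots u_\lambda\}$, and the argument you then give via $m=\varepsilon_{i_1}(b')$, $b=\tilde e_{i_1}^m b'$ is the $\supseteq$ direction), and everything correctly reduces to the stability statement $\tilde e_{i}\bigl(B^{+}(\lambda)_{w'}\bigr)\subseteq B^{+}(\lambda)_{w'}\cup\{0\}$.

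The genuine gap is that your proof of this Key Lemma is not a proof, for two concrete reasons. First, the induction does not close as structured: you state the lemma only under the hypothesis $s_iw'>w'$, but after peeling off $s_{j_1}$ to get $w'=s_{j_1}w''$ there is no reason for $i$ to remain a non-descent of $w''$ (e.g.\ $w'=s_2s_1$, $i=1$: $s_1$ is not a left descent of $w'$ but is one of $w''=s_1$), so the inductive hypothesis is unavailable exactly where you need it; one must prove stability under \emph{all} $\tilde e_i$, which is what Kashiwara's theorem actually asserts. Second, the step ``invoke the rank-two relations among Kashiwara operators to rewrite $\tilde e_i\tilde f_{j_1}^k$'' has no content when $a_{i,j_1}<0$: crystal operators satisfy no usable local commutation or braid-type relations on a general crystal in that case, and this is precisely where all the difficulty of the theorem lives. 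Kashiwara's actual argument goes through the compatibility of Demazure modules with global bases (or, alternatively, Littelmann's path model), not a local rank-two computation. As written, your proposal defers the entire mathematical content to the reference at its one nontrivial point.
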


\section{Generalized minors and crystals}\label{gmc}

Let $G$ be a complex classical algebraic group. In this section, we describe certain initial cluster variables on $G^{u,e}$ by monomial realizations of Demazure crystals. In the rest of paper, we only treat elements $u\in W$ $(l(u)=n)$ whose reduced word ${\rm \bf{i}}$ can be written as a left factor of ${\rm \bf{i}}_0$ in (\ref{redwords}), which means that ${\rm \bf{i}}$ is defined by
\begin{equation}\label{redwords2}
\begin{cases}
(\underbrace{1,2,\cdots,r}_{1{\rm st\ cycle}},\underbrace{1,2,\cdots,r-1}_{2{\rm nd\ cycle} },\cdots,\underbrace{1,2,\cdots,r-m+1}_{m-1{\rm th\ cycle}},\underbrace{1,2,\cdots,i_n}_{m{\rm th\ cycle}}) & {\rm for}\ {\rm A}_r,\\
(\underbrace{1,2,\cdots,r}_{1{\rm st\ cycle}},\underbrace{1,2,\cdots,r}_{2{\rm nd\ cycle}}\cdots \underbrace{1,2,\cdots,r}_{m-1{\rm th\ cycle}},\underbrace{1,2,\cdots,i_n}_{m{\rm th\ cycle}}) & {\rm for}\ {\rm B}_r,\ {\rm C}_r,\ {\rm D}_r,
\end{cases}
\end{equation}
where $m\in [1,r]$ (for type ${\rm A}_r,\ {\rm B}_r,\ {\rm C}_r$) and $m\in [1,r-1]$ (for type ${\rm D}_r$), $i_n\in [1,r-m]$ (for type ${\rm A}_r$) and $i_n\in [1,r]$ (for type ${\rm B}_r,\ {\rm C}_r,\ {\rm D}_r$). Let $i_k$ be the $k$-th index of  ${\rm \bf{i}}$ from the left, and belong to $m'$-th cycle ($m'\leq m$). As we shall show in Lemma \ref{gmlem}, we may assume $i_n=i_k$. 

By Theorem \ref{clmainthm}, we can regard $\mathbb{C}[G^{u,e}]$ as a cluster algebra and $\{\Delta(k;{\rm \bf{i}})\}$ as its cluster
variables. Each $\Delta(k;{\rm \bf{i}})$ is a regular function on
$G^{u,e}$. On the other hand, by Proposition \ref{gprime} (resp. Theorem
\ref{fp2}), $\Delta(k;{\rm \bf{i}})$ can be seen as a function on
$H\times
(\mathbb{C}^{\times})^{l(u)}$ (resp. $(\mathbb{C}^{\times})^{l(u)}$). Here,
we change the variables of 
$\{\Delta(k;{\rm \bf{i}})\}$ as follows: 
\begin{defn}\label{gendef}
Along (\ref{redwords2}), we set a variable $\textbf{Y}\in  (\mathbb{C}^{\times})^{n}$ as
\begin{equation}\label{yset}
\textbf{Y}:= \\
\begin{cases}
(Y_{1,1},Y_{1,2},\cdots,Y_{1,r},Y_{2,1},Y_{2,2},\cdots,Y_{2,r-1}, \\
\qq \qq \qq \cdots,Y_{m-1,1},\cdots,Y_{m-1,r-m+1},Y_{m,1},\cdots,Y_{m,i_n}), & ({\rm A}_r) 
\\
(Y_{1,1},Y_{1,2},\cdots,Y_{1,r},Y_{2,1},Y_{2,2},\cdots,Y_{2,r}, \\
\qq \qq \qq \cdots,Y_{m-1,1},\cdots,Y_{m-1,r},Y_{m,1},\cdots,Y_{m,i_n}), & ({\rm B}_r,\ {\rm C}_r\ {\rm D}_r)
\end{cases}
\end{equation}
and for $a\in H$, define
\[ 
\Delta^G(k;{\rm \bf{i}})(a,\textbf{Y}):=(\Delta(k;{\rm \bf{i}})\circ
 \ovl{x}^G_{{\rm \bf{i}}})(a,\textbf{Y}), \q
\Delta^L(k;{\rm \bf{i}})(\textbf{Y}):=(\Delta(k;{\rm \bf{i}})\circ
 x^L_{{\rm \bf{i}}})(\textbf{Y}),
\]
where $\ovl{x}^G_{{\rm \bf{i}}}$ and $x^L_{{\rm \bf{i}}}$ are as in Subsection \ref{fuctorisec}.
\end{defn}

\begin{rem}\label{importantrem}
If we see the variables $Y_{s,0}$, $Y_{s,r+1}$ 
$(1\leq s\leq m)$ then 
we understand $Y_{s,0}=Y_{s,r+1}=1$. For example, if $i=1$ then $Y_{s,i-1}=1$.
\end{rem}

Next proposition implies that
$\Delta^G(k;{\rm \bf{i}})(a,\textbf{Y})$ is 
immediately obtained from $\Delta^L(k;{\rm \bf{i}})(\textbf{Y})$:
\begin{prop}\cite{KaN:2015, KaN2:2016, Ka1:2016}\label{gprop}
We set $d:=i_k$. For $a\in H$, we have
\[ 
\Delta^G(k;{\rm \bf{i}})(a,\textbf{Y})=
(a^{u_{\leq k}\Lambda_d})\Delta^L(k;{\rm \bf{i}})(\textbf{Y}) 
\] 
\end{prop}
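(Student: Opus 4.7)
The plan is to unwind all the definitions and reduce the identity to the elementary fact that a generalized minor $\Delta_{w\Lambda_d,\Lambda_d}$ transforms by the scalar $a^{w\Lambda_d}$ under left multiplication by $a\in H$.

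First, I would unpack the two sides. By Definition \ref{gendef} and the formula $\ovl{x}^G_{\bfi}(a,\mathbf Y)=a\,x^L_{\bfi}(\mathbf Y)$ recalled just above Proposition \ref{gprime}, one has
\[
\Del^G(k;\bfi)(a,\mathbf Y)=\Del_{u_{\leq k}\Lm_d,\Lm_d}\bigl(a\cdot x^L_{\bfi}(\mathbf Y)\bigr),\qquad
\Del^L(k;\bfi)(\mathbf Y)=\Del_{u_{\leq k}\Lm_d,\Lm_d}\bigl(x^L_{\bfi}(\mathbf Y)\bigr).
\]
Hence it suffices to prove the pointwise identity
\[
\Del_{u_{\leq k}\Lm_d,\Lm_d}(a\,g)=a^{u_{\leq k}\Lm_d}\,\Del_{u_{\leq k}\Lm_d,\Lm_d}(g)\qquad(a\in H,\ g\in G),
\]
and then specialize $g=x^L_{\bfi}(\mathbf Y)$.

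For the pointwise identity I would argue directly from the bilinear-form realization \eqref{minor-bilin}. Writing $w:=u_{\leq k}$ and using \eqref{minor-bilin} in the fundamental representation $V(\Lm_d)$,
\[
\Del_{w\Lm_d,\Lm_d}(a\,g)=\bigl\langle (a g)\cdot u_{\Lm_d}\,,\,\ovl w\cdot u_{\Lm_d}\bigr\rangle=\bigl\langle a\,(g\cdot u_{\Lm_d})\,,\,\ovl w\cdot u_{\Lm_d}\bigr\rangle.
\]
Since $\omega$ fixes $H$ pointwise, the contravariance property $\langle au,v\rangle=\langle u,\omega(a)v\rangle$ of the bilinear form gives $\langle au,v\rangle=\langle u,a v\rangle$ for $a\in H$. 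Applying this and then using that $\ovl w\cdot u_{\Lm_d}$ is a weight vector of weight $w\Lm_d$, so that $a\cdot(\ovl w\,u_{\Lm_d})=a^{w\Lm_d}\,\ovl w\,u_{\Lm_d}$, we conclude
\[
\Del_{w\Lm_d,\Lm_d}(a g)=a^{w\Lm_d}\bigl\langle g\cdot u_{\Lm_d}\,,\,\ovl w\cdot u_{\Lm_d}\bigr\rangle=a^{w\Lm_d}\,\Del_{w\Lm_d,\Lm_d}(g).
\]
Setting $g=x^L_{\bfi}(\mathbf Y)$ and $w=u_{\leq k}$ yields the stated formula.

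There is no real obstacle: the argument is uniform across types $\rA_r,\rB_r,\rC_r,\rD_r$ because it uses only that $V(\Lm_d)$ carries a contravariant bilinear form and that $\ovl{u_{\leq k}}\,u_{\Lm_d}$ is a weight vector of weight $u_{\leq k}\Lm_d$; the latter does not depend on the particular model (wedge product or spin) of the fundamental representation recalled in Section \ref{SectFund}. The only point to keep in mind is that the factorization $\ovl{x}^G_{\bfi}(a,\mathbf Y)=a\,x^L_{\bfi}(\mathbf Y)$ places $a\in H$ on the \emph{left}, which is precisely the side on which the first index of a generalized minor records its weight; this is the reason $a^{u_{\leq k}\Lm_d}$ (rather than $a^{\Lm_d}$) appears as the proportionality factor.
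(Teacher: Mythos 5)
Your argument is correct, and it is the natural one: the paper itself gives no proof of Proposition \ref{gprop} (it is quoted from \cite{KaN:2015, KaN2:2016, Ka1:2016}), and the computation there is essentially what you wrote, namely reducing to $\Delta_{w\Lambda_d,\Lambda_d}(ag)=a^{w\Lambda_d}\Delta_{w\Lambda_d,\Lambda_d}(g)$ via the contravariant form \eqref{minor-bilin}, the fact that $\omega$ fixes $H$, and the fact that $\ovl{w}\cdot u_{\Lambda_d}$ has weight $w\Lambda_d$. The only point worth making explicit is the normalization $\langle u_{\Lambda_d},u_{\Lambda_d}\rangle=1$ implicit in \eqref{minor-bilin}, which does not affect the identity.
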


In the rest of the paper, we will treat $\Delta^L(k;{\rm \bf{i}})(\textbf{Y})$ only due to this proposition.

\begin{lem}\cite{KaN:2015, KaN2:2016, Ka1:2016}\label{gmlem}
Let ${\rm \bf{i}}$, $\textbf{Y}$ be as in $(\ref{redwords2})$, $(\ref{yset})$, and $u\in W$ be an element whose reduced word is ${\rm \bf{i}}$. Let $i_{n+1}\in [1,r]$ be an index such that $u':=us_{i_{n+1}}\in W$ satisfies $l(u')>l(u)$. We set the reduced word ${\rm \bf{i}}'$ for $u'$ as
\[
{\rm \bf{i}}'=(\underbrace{1,\cdots,r}_{1{\rm \,st\
  cycle}},\underbrace{1,\cdots,r}_{2{\rm \,nd\
  cycle}},\cdots,\underbrace{1,\cdots,r}_{m-1{\rm \,th\ cycle}},
\underbrace{1,\cdots,i_n}_{m{\rm \,th\ cycle}},i_{n+1}),
\]
and set $\textbf{Y}'\in(\mathbb{C}^{\times})^{n+1}$ as
\[ \textbf{Y}':=(Y_{1,1},\cdots,Y_{1,r},\cdots,Y_{m-1,1},\cdots,Y_{m-1,r},Y_{m,1},\cdots,Y_{m,i_n},Y).
\] 
For an integer $k$ $(1\leq k\leq n)$, if $d:=i_k\neq i_{n+1}$, then $\Delta^L(k;{\rm \bf{i}}')(\textbf{Y}')$ does not depend on $Y$. So, we can regard it as a function on $(\mathbb{C}^{\times})^{n}$. Furthermore, we have 
\[
\Delta^L(k;{\rm \bf{i}})(\textbf{Y})=\Delta^L(k;{\rm \bf{i}}')(\textbf{Y}') .
\]
\end{lem}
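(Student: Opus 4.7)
The plan is to express both generalized minors via the bilinear form formula \eqref{minor-bilin} on the fundamental representation $V(\Lambda_d)$, and then show that the extra factor $x_{-i_{n+1}}(Y)$ at the end of $x^L_{\bf{i}'}$ acts trivially on the highest weight vector $u_{\Lambda_d}$.

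First I would note that since $k\leq n$, the truncated Weyl element $u_{\leq k}({\rm \bf{i}})=s_{i_1}\cdots s_{i_k}$ coincides with $u_{\leq k}({\rm \bf{i}}')$; in particular $\overline{u_{\leq k}}\cdot u_{\Lambda_d}$ is the same vector in $V(\Lambda_d)$ for both reduced words. By \eqref{minor-bilin}, it therefore suffices to establish the identity
\begin{equation*}
x^L_{\bf{i}'}(\textbf{Y}')\cdot u_{\Lambda_d}\;=\;x^L_{\bf{i}}(\textbf{Y})\cdot u_{\Lambda_d}
\end{equation*}
in $V(\Lambda_d)$, for then pairing with $\overline{u_{\leq k}}\cdot u_{\Lambda_d}$ on the right yields the desired equality of generalized minors.

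Second, from the definition \eqref{xldef} we may factor
\begin{equation*}
x^L_{\bf{i}'}(\textbf{Y}')\;=\;x^L_{\bf{i}}(\textbf{Y})\cdot x_{-i_{n+1}}(Y),
\end{equation*}
so the claim reduces to $x_{-i_{n+1}}(Y)\cdot u_{\Lambda_d}=u_{\Lambda_d}$. Using \eqref{alxmdef}, I would compute
\begin{equation*}
x_{-i_{n+1}}(Y)\cdot u_{\Lambda_d}\;=\;y_{i_{n+1}}(Y)\,\alpha^{\vee}_{i_{n+1}}(Y^{-1})\cdot u_{\Lambda_d}\;=\;Y^{-\Lambda_d(h_{i_{n+1}})}\,y_{i_{n+1}}(Y)\cdot u_{\Lambda_d},
\end{equation*}
since $u_{\Lambda_d}$ has weight $\Lambda_d$. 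Because $\Lambda_d(h_{i_{n+1}})=\delta_{d,i_{n+1}}$ and by hypothesis $d\neq i_{n+1}$, the torus factor is trivial. Moreover, the $\mathfrak{sl}_2$-triple at node $i_{n+1}$ acting on the highest weight vector $u_{\Lambda_d}$ gives $f_{i_{n+1}}u_{\Lambda_d}=0$ (since $\langle h_{i_{n+1}},\Lambda_d\rangle=0$), so $y_{i_{n+1}}(Y)\cdot u_{\Lambda_d}=\exp(Y f_{i_{n+1}})u_{\Lambda_d}=u_{\Lambda_d}$. Combining these yields $x_{-i_{n+1}}(Y)\cdot u_{\Lambda_d}=u_{\Lambda_d}$, which completes the proof and simultaneously shows that $\Delta^L(k;{\rm \bf{i}}')(\textbf{Y}')$ does not involve the variable $Y$.

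There is no real obstacle here: once one has the bilinear-form description \eqref{minor-bilin} and the explicit decomposition $x_{-i}(t)=y_i(t)\alpha^{\vee}_i(t^{-1})$ from \eqref{alxmdef}, the argument is a one-line verification on the highest weight vector. The only point requiring mild care is to confirm that $u_{\leq k}$ is unchanged when passing from ${\rm \bf{i}}$ to ${\rm \bf{i}}'$, which is immediate because the first $n$ letters coincide and $k\leq n$.
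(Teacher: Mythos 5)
Your proof is correct. The paper itself does not prove Lemma \ref{gmlem} here (it only cites \cite{KaN:2015, KaN2:2016, Ka1:2016}), but your argument --- factoring $x^L_{{\rm \bf{i}}'}(\textbf{Y}')=x^L_{{\rm \bf{i}}}(\textbf{Y})\,x_{-i_{n+1}}(Y)$, observing that $u_{\leq k}$ and $i_k$ are unchanged since $k\leq n$, and checking via $\Lambda_d(h_{i_{n+1}})=0$ that both the torus factor and $y_{i_{n+1}}(Y)$ fix the highest weight vector $u_{\Lambda_d}$ --- is exactly the computation underlying the bilinear-form formula (\ref{minor-bilin}) that the paper uses everywhere else (e.g.\ Example \ref{monorealex2} and the proofs in Sect.~7), and it is complete as written.
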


By this lemma, when we calculate $\Delta^L(k;{\rm \bf{i}})(\textbf{Y})$, we may
assume that $i_n=i_k$ 
without loss of generality.

Let $B(\Lm_d)$ $(1\leq d\leq r)$ be the crystal base of the same type as $G$ and $v_{\Lm_d}\in B(\Lm_d)$ be its lowest weight vector. The following theorem is our main result.

\begin{thm}\label{thm1}
Let ${\rm \bf{i}}$ be the reduced word of $u\in W$ in $(\ref{redwords2})$ and $\textbf{Y}\in (\mathbb{C}^{\times})^n$ be the variables as in $(\ref{yset})$. We suppose that the 
index $i_k$ belongs to $(m-1)$th cycle. Setting $d:=i_k=i_n$, there exist positive integers $\{a_b|\ b\in B^-(\Lm_d)_{u_{\leq k}}\}$ such that
\begin{equation}\label{GenDem}
\Delta^L(k;{\rm \bf{i}})(\textbf{Y})= \sum_{b\in B^-(\Lm_d)_{u_{\leq k}}}a_b\ \mu(b),
\end{equation}
where $B^-(\Lm_d)_{u_{\leq k}}$ is the lower Demazure crystal of $B(\Lm_d)$ and $\mu:B(\Lm_d)\rightarrow \cY$ is the monomial realization of the crystal base $B(\Lm_d)$ such that $\mu(v_{\Lm_d})=\frac{1}{Y_{m,d}}$.
\end{thm}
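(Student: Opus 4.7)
The plan is to prove Theorem \ref{thm1} by carrying out two parallel computations and matching them term by term: an explicit expansion of $\Delta^L(k;\mathbf{i})(\mathbf{Y})$ via the bilinear form on the fundamental representation $V(\Lm_d)$, and a parametrization of the Demazure crystal $B^-(\Lm_d)_{u_{\leq k}}$ via iterated Kashiwara operators as in Theorem \ref{kashidem}.

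First, starting from \eqref{minor-bilin}, I would write
\begin{equation*}
\Delta^L(k;\mathbf{i})(\mathbf{Y}) = \langle\, x_{-i_1}(Y_{1,1})\cdots x_{-i_n}(Y_{m,i_n})\cdot u_{\Lm_d},\;\overline{u_{\leq k}}\cdot u_{\Lm_d}\,\rangle,
\end{equation*}
and expand each factor using $x_{-i}(t)=\exp(tf_i)\cdot t^{-h_i}$. This produces a Laurent polynomial indexed by tuples $(a_1,\ldots,a_n)$ of non-negative integers recording how many $f_{i_j}$'s are applied in each slot. Only tuples for which the resulting vector pairs non-trivially with the extremal vector $\overline{u_{\leq k}}\,u_{\Lm_d}$ contribute, forcing the weight equality $\Lm_d-\sum_j a_j\alpha_{i_j}=u_{\leq k}\Lm_d$. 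Using the explicit $\frg$-action on $V(\Lm_d)$ from Sect.~\ref{SectFund} (and the spin realizations $V_{\rm sp}^{(\pm,r)}$ when $d$ is a spin node in type $\rB_r$ or $\rD_r$), the matrix coefficients become monomials in the $Y_{s,i}$. For types $\rA_r$, $\rB_r$, $\rC_r$ the closed-form expansions are already available in \cite{KaN:2015,KaN2:2016,Ka1:2016}, and the same style of computation is carried out for $\rD_r$ in Sect.~7.

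Second, I would apply Theorem \ref{kashidem} to write every element of $B^-(\Lm_d)_{u_{\leq k}}$ in the form $\tilde{e}_{i_1}^{b_1}\cdots\tilde{e}_{i_k}^{b_k}v_{\Lm_d}$ with $b_j\geq 0$, and then push this description through the monomial realization $\mu$ normalized by $\mu(v_{\Lm_d})=1/Y_{m,d}$. The recursive formula \eqref{asidef} for the $A_{s,i}$'s and the Kashiwara rules for $\mathcal{Y}$ in Sect.~5.1 convert each $\tilde{e}_{i_j}$ into an explicit Laurent monomial factor, so each element of the Demazure crystal acquires a concrete Laurent monomial image. The crucial point is that because $\mathbf{i}$ is a left factor of $\mathbf{i}_0$, the exponents $(b_1,\ldots,b_k)$ range freely and cut out precisely the Demazure pattern; this is what allows a clean bijection with the tuples $(a_j)$ contributing in Step~1.

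The main obstacle lies in the third step: establishing the bijection between admissible tuples $(a_1,\ldots,a_n)$ from the bilinear form expansion and sequences $(b_1,\ldots,b_k)$ parametrizing $B^-(\Lm_d)_{u_{\leq k}}$, together with verifying that the accompanying coefficients $a_b$ are \emph{positive integers}. In type $\rA_r$ all such coefficients are $1$, but in types $\rB_r$, $\rC_r$, $\rD_r$ they can exceed $1$ (see the coefficient $2$ in \eqref{intro1}); this is traceable to the non-trivial structure constants in \eqref{B-f2} and in the spin actions \eqref{Bsp-f1}--\eqref{Dsp-f2}, together with the binomial coefficients arising when $f_{i_j}^{a_j}$ acts along chains of weight vectors. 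I expect the cleanest way to control the coefficients is an induction on $k$ (equivalently, on the length of the Demazure substring), using the factorization $x_{-i_k}(Y_{m',d})$ on the rightmost slot at which $i_j=d$ and matching the resulting recursion with the inductive definition of $B^-(\Lm_d)_{u_{\leq k}}$ from $B^-(\Lm_d)_{u_{\leq k-1}}$. The spin nodes $d=r$ in type $\rB_r$ and $d\in\{r-1,r\}$ in type $\rD_r$ require separate bookkeeping using the patterns exhibited in Examples \ref{monorealex1} and \ref{monorealexspin1}.
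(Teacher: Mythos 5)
Your overall architecture (expand $\Delta^L(k;\mathbf{i})$ via the bilinear form, parametrize $B^-(\Lm_d)_{u_{\leq k}}$ by Theorem \ref{kashidem}, then match) is the same as the paper's, but the decisive step is exactly the one you defer to as ``the main obstacle,'' and the mechanism you propose for it --- a bijection between exponent tuples $(a_1,\dots,a_n)$ of the $f_{i_j}$'s and exponent sequences $(b_1,\dots,b_k)$ of the $\tilde e_{i_j}$'s, controlled by induction on $k$ --- is not what closes the gap and would be hard to make work as stated. No such bijection of tuples exists (the multiplicities $a_b>1$ already rule it out), and an induction on $k$ does not naturally relate $\Delta^L(k;\mathbf{i})$ to $\Delta^L(k-1;\mathbf{i})$, since these are minors attached to different fundamental weights $\Lambda_{i_k}\neq\Lambda_{i_{k-1}}$. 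Your diagnosis of where the coefficients $>1$ come from is also only partly right: in type ${\rm C}_r$ the vector representation has no nontrivial structure constants, yet the coefficient $2$ in \eqref{intro1} still appears because two distinct index tuples produce the same Laurent monomial ($C(2,1)C(1,\ovl 1)=C(1,2)C(1,\ovl 2)=Y_{1,1}/Y_{2,1}$ in Example \ref{monorealex3}); only in type ${\rm B}_r$ is the $2$ traceable to the structure constant $f_rv_0=2v_{\ovl r}$.

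What the paper actually does is decouple the two computations through an intermediate combinatorial object: the closed-form expansion writes $\Delta^L(k;\mathbf{i})$ as a sum of products of elementary factors $C(l,k)$, $B(l,k)$, $D(l,k)$ over tuples $(k_1,\dots,k_d)$ satisfying explicit interlacing conditions, and one defines $\mathbb{B}\subset\cY$ to be the \emph{set} of Laurent monomials so arising. The theorem then reduces to the set equality $\mu(B^-(\Lm_d)_{u_{\leq k}})=\mathbb{B}$, proved by two inclusions: (a) $\tilde e_j$ acts on a product $C(l_1,k_1)\cdots C(l_d,k_d)\in\mathbb{B}$ by modifying a single factor ($C(l,j)\mapsto C(l-1,j+1)$ or $C(l,\ovl{j+1})\mapsto C(l,\ovl j)$, Lemmas \ref{Clem1}, \ref{Blem1}, \ref{Dlem1}), so the admissibility conditions are preserved and $E_1\cdots E_{m-1}(1/Y_{m,d})\in\mathbb{B}\cup\{0\}$; (b) conversely, for each $X\in\mathbb{B}$ one explicitly constructs a word in the $\tilde e_i$'s, compatible with the reduced word of $u_{\leq k}$, that produces $X$ from $1/Y_{m,d}$ (Lemmas \ref{Clem2}, \ref{Blem2}, \ref{Dlem2}, by induction on the number of ``barred'' factors, not on $k$). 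Positivity and integrality of the $a_b$ then come for free by collecting equal monomials in the closed formula. Without something playing the role of inclusion (b) --- an explicit Kashiwara-operator string reaching every monomial of the expansion --- your plan does not establish that the sum in \eqref{GenDem} runs over \emph{all} of $B^-(\Lm_d)_{u_{\leq k}}$ and over nothing more.
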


\begin{ex}\label{monorealex2}
Let us consider the case of type ${\rm C}_2$. Take $u=s_1s_2s_1s_2\in W$ and let ${\rm \bf{i}}=(1,2,1,2)$ be its reduced word. Following Subsection \ref{SectFundC} and \ref{bilingen}, we shall calculate $\Delta^L(2;{\rm \bf{i}})(\textbf{Y})$. First, it follows from $\al_i(t)=t^{-h_i}$ and $y_i(t)={\rm exp}(t f_i)=1+t f_i$ on $V(\Lm_2)$ that
\begin{eqnarray*}
& &x^L_{{\rm \bf{i}}}(\textbf{Y})(v_1\wedge v_2) \\
&=& y_1(Y_{1,1})\al^{\vee}_1(Y^{-1}_{1,1})y_2(Y_{1,2})\al^{\vee}_2(Y^{-1}_{1,2})
y_1(Y_{2,1})\al^{\vee}_1(Y^{-1}_{2,1})y_2(Y_{2,2})
\al^{\vee}_2(Y^{-1}_{2,2})(v_1\wedge v_2)\\
&=& y_1(Y_{1,1})\al^{\vee}_1(Y^{-1}_{1,1})y_2(Y_{1,2})\al^{\vee}_2(Y^{-1}_{1,2}) 
\left(\frac{1}{Y_{2,1}}v_1+v_2\right)\wedge \left(\frac{Y_{2,1}}{Y_{2,2}}v_2+\frac{1}{Y_{2,1}}v_{\ovl{2}}+v_{\ovl{1}}\right)\\
&=& \left(\frac{1}{Y_{1,1}Y_{2,1}}v_1+\left(\frac{1}{Y_{2,1}}+\frac{Y_{1,1}}{Y_{1,2}}\right)v_2+\frac{1}{Y_{1,1}}v_{\ovl{2}}+v_{\ovl{1}} \right)\wedge \\
& &\qq \qq \left(\frac{Y_{1,1}Y_{2,1}}{Y_{1,2}Y_{2,2}}v_2+\left(\frac{Y_{2,1}}{Y_{1,1}Y_{2,2}}+\frac{Y_{1,2}}{Y_{2,1}Y_{1,1}} \right)v_{\ovl{2}}+\left(\frac{Y_{2,1}}{Y_{2,2}}+\frac{Y_{1,2}}{Y_{2,1}}+Y_{1,1} \right)v_{\ovl{1}} \right).
\end{eqnarray*}
Since $u_{\leq2}=s_1s_2$, we obtain
\begin{eqnarray}
\Delta^L(2;{\rm \bf{i}})(\textbf{Y})
&=&\lan x^L_{{\rm \bf{i}}}(\textbf{Y})(v_1\wedge v_2) ,\ s_1s_2(v_1\wedge v_2)\ran= \lan x^L_{{\rm \bf{i}}}(\textbf{Y})(v_1\wedge v_2) ,\ v_2\wedge v_{\ovl{1}} \ran \nonumber \\
&=&\left(\frac{1}{Y_{2,1}}+\frac{Y_{1,1}}{Y_{1,2}}\right)\cdot\left(\frac{Y_{2,1}}{Y_{2,2}}+\frac{Y_{1,2}}{Y_{2,1}}+Y_{1,1} \right)-\frac{Y_{1,1}Y_{2,1}}{Y_{1,2}Y_{2,2}} \nonumber \\
&=&\frac{Y^2_{1,1}}{Y_{1,2}}+2\frac{Y_{1,1}}{Y_{2,1}} +\frac{Y_{1,2}}{Y^2_{2,1}}+\frac{1}{Y_{2,2}}.\label{mono2exp}
\end{eqnarray}
Note that these four terms coincide with the monomials in Example \ref{monorealex1} $(\ref{monorealdia1})$ except for $Y_{0,2}$. The set $\{\frac{Y^2_{1,1}}{Y_{1,2}},\ \frac{Y_{1,1}}{Y_{2,1}},\ \frac{Y_{1,2}}{Y^2_{2,1}},\ \frac{1}{Y_{2,2}}\}$ is a monomial realization of Demazure crystal $B^-(\Lm_2)_{s_1s_2}$.
\end{ex}

\begin{ex}\label{monorealspinex2}
Let us consider the case of type ${\rm B}_3$. Take the element $u=s_1s_2s_3s_1s_2s_3s_1s_2s_3\in W$ and let ${\rm \bf{i}}=(1,2,3,1,2,3,1,2,3)$ be its reduced word. Following Subsection \ref{SectFundB} and \ref{bilingen}, we shall calculate $\Delta^L(6;{\rm \bf{i}})(\textbf{Y})$:
\[
\Delta^L(6;{\rm \bf{i}})(\textbf{Y})=\frac{Y_{1,2}}{Y_{1,3}}+\frac{Y_{1,3}Y_{2,1}}{Y_{2,2}}+\frac{Y_{1,3}}{Y_{3,1}}
+\frac{Y_{2,1}}{Y_{2,3}}+\frac{Y_{2,2}}{Y_{2,3}Y_{3,1}}+\frac{Y_{2,3}}{Y_{3,2}}+\frac{1}{Y_{3,3}}.
\]
Note that these seven terms coincide with the monomials in Example \ref{monorealexspin1} $(\ref{monorealspindia1})$ except for $Y_{0,3}$. The set $\{\frac{Y_{1,2}}{Y_{1,3}},\ \frac{Y_{1,3}Y_{2,1}}{Y_{2,2}},\ \frac{Y_{1,3}}{Y_{3,1}},\ \frac{Y_{2,1}}{Y_{2,3}},\ \frac{Y_{2,2}}{Y_{2,3}Y_{3,1}},\ \frac{Y_{2,3}}{Y_{3,2}},\ \frac{1}{Y_{3,3}}\}$ is a monomial realization of Demazure crystal $B^-(\Lm_3)_{s_1s_2s_3s_1s_2s_3}$.
\end{ex}

\section{The explicit formula of $\Delta^L(k;{\rm \bf{i}})$ of type ${\rm D}_r$}

In \cite{KaN:2015, KaN2:2016, Ka1:2016}, we have given the explicit formula of minors $\Delta^L(k;{\rm \bf{i}})$ of type ${\rm A}_r$, ${\rm B}_r$ and ${\rm C}_r$. In this section, we shall give the one for type ${\rm D}_r$ in some special cases, which is needed in the proof of Theorem \ref{thm1}. Let ${\rm \bf{i}}$ be a reduced word in (\ref{redwords}) and $i_k$ be its $k$-th the index from the left. We suppose that $i_k$ belongs to $(m-1)$ th cycle as in the previous section.

\subsection{The explicit formula of $\Delta^L(k;{\rm \bf{i}})$}

For $l\in[1,m]$ and $k\in J_{{\rm D}}$, we define the Laurent monomials $D(l,k)$ by
\begin{equation}\label{ddbar}
D(l,k):=
\begin{cases}
\frac{Y_{l,k-1}}{Y_{l,k}} & {\rm if}\ 1\leq k\leq r-2, \\
\frac{Y_{l,r-2}}{Y_{l,r-1}Y_{l,r}} & {\rm if}\ k= r-1, \\
\frac{Y_{l,r-1}}{Y_{l+1,r}} & {\rm if}\ k=r, \\
\frac{Y_{l,r}Y_{l,r-1}}{Y_{l+1,r-2}} & {\rm if}\ k= \ovl{r-1}, \\
\frac{Y_{l,|k|}}{Y_{l+1,|k|-1}} & {\rm if}\ \ovl{r-2}\leq k\leq \ovl{1},\ {\rm or}\ k=\ovl{r}.
\end{cases}
\end{equation}
and set formally
\begin{equation}\label{ddbar2} D(l,r+1):=\frac{1}{Y_{l,r}}. \end{equation}

We shall prove the following theorems:

\begin{thm}\label{thmd1}
In the setting of Theorem \ref{thm1}, suppose that $i_k=d<r-1$. We have
\begin{equation}\label{D-explicit}
 \Delta^L(k;{\rm \bf{i}})(\textbf{Y})=\sum_{(*)} D(l_1,k_1)D(l_2,k_2)\cdots D(l_d,k_d),
\end{equation}
\[ l_i:=
\begin{cases}
m-k_i+i & {\rm if}\ 1\leq k_i\leq r-1, \\
m-r+i & {\rm if}\ k_i\in\{\ovl{r},\cdots,\ovl{1}\}\cup\{r\},
\end{cases}
\]
where $(*)$ is the condition for $k_i$ $(1\leq i\leq d)$ $:$
\begin{equation}\label{D-cond1}
 k_1\ngeq k_2\ngeq \cdots\ngeq k_d,
\end{equation}
\begin{equation}\label{D-cond2}
i\leq k_i\leq m-1+i\qq (1\leq i\leq r-m),
\end{equation}
\begin{equation}\label{D-cond3}
i\leq k_i\leq \ovl{d-i+1}\qq (r-m< i\leq d).
\end{equation}
\end{thm}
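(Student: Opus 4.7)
The plan is to adapt to type ${\rm D}_r$ the direct matrix-coefficient computation used in the authors' earlier papers \cite{KaN:2015, KaN2:2016, Ka1:2016} for types ${\rm A}_r$, ${\rm B}_r$, ${\rm C}_r$. By the bilinear-form identity (\ref{minor-bilin}),
\[
\Delta^L(k;\bfi)(\mathbf{Y}) = \langle x^L_{\bfi}(\mathbf{Y})\cdot u_{\Lambda_d},\; \overline{u_{\leq k}}\cdot u_{\Lambda_d}\rangle,
\]
and since $d=i_k<r-1$, we work inside the wedge realization $V(\Lambda_d)\hookrightarrow \wedge^d V(\Lambda_1)$ from Subsection \ref{SectFundD}, with $u_{\Lambda_d}=v_1\wedge\cdots\wedge v_d$. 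The reduced word $\bfi$ decomposes $x^L_{\bfi}(\mathbf{Y})$ into a product of $m$ ``cycles'', the $l$-th cycle being $x_{-1}(Y_{l,1})x_{-2}(Y_{l,2})\cdots x_{-r}(Y_{l,r})$ (with the $m$-th cycle possibly truncated at $i_n$).

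First, I would compute the action of a single factor $x_{-i}(t)=y_i(t)\alpha_i^{\vee}(t^{-1})$ on each basis vector $v_j,v_{\bar j}$ of $V(\Lambda_1)$ using formulas (\ref{D-f1})--(\ref{D-f2}) together with $\alpha_i^{\vee}(t^{-1})v=t^{-\mathrm{wt}(v)(h_i)}v$ and $y_i(t)=1+tf_i$ on $V(\Lambda_1)$. Then, composing one full cycle starting from $v_i$, an induction would show that the coefficient of $v_k$ (for $k\in J_{{\rm D}}$) after applying the $(l-1)$-st cycle to $v_i$ is exactly the Laurent monomial $D(l,k)$ of (\ref{ddbar}) (with $D(l,i)=1$). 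The three different definitions for $D(l,r-1),\,D(l,r),\,D(l,\overline{r-1})$ in (\ref{ddbar}) precisely record the branching of $f_r$ in type ${\rm D}_r$, namely $f_r v_{r-1}=v_{\bar r}$ and $f_r v_r=v_{\overline{r-1}}$.

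Second, applying $x^L_{\bfi}(\mathbf{Y})$ to $u_{\Lambda_d}=v_1\wedge\cdots\wedge v_d$ and expanding in the basis of $\wedge^d V(\Lambda_1)$ yields a sum of wedges $v_{k_1}\wedge\cdots\wedge v_{k_d}$ whose coefficient is a product $\prod_{i=1}^d D(l_i,k_i)$, where $l_i$ records the cycle in which the $i$-th tensor factor ``stops''. The total count of $m$ cycles together with the starting index $i$ of the $i$-th slot furnish the range constraints (\ref{D-cond2})--(\ref{D-cond3}); antisymmetry of the wedge product and the non-collision of the $d$ propagating indices force the ordering (\ref{D-cond1}), where the incomparability of $r$ and $\bar r$ in (\ref{D-order}) is exactly what allows the slightly weaker strict-non-greater-or-equal condition. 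Finally, since $i_k=d$ sits in the $(m-1)$-st cycle and $\overline{u_{\leq k}}u_{\Lambda_d}$ has the corresponding extremal weight, the pairing selects exactly the admissible wedges and contributes a factor $1$ to each, yielding (\ref{D-explicit}).

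The main obstacle will be the bookkeeping at the forked node: within a single cycle, both $x_{-(r-1)}(Y_{l,r-1})$ and $x_{-r}(Y_{l,r})$ act nontrivially on the four vectors $v_{r-1},v_r,v_{\bar r},v_{\overline{r-1}}$, so verifying that the monomials $D(l,r-1),D(l,r),D(l,\overline{r-1})$ come out exactly as in (\ref{ddbar}) requires a careful case analysis of the order of application within that cycle. This parallels, but differs in detail from, the type ${\rm B}_r$ computation in \cite{Ka1:2016} where the short root is $\alpha_r$ rather than the forked pair $\alpha_{r-1},\alpha_r$. Once the single-cycle propagation of one basis vector is pinned down, the remainder is a straightforward bookkeeping induction on the pair $(d,m)$ and the argument closes by comparing coefficients.
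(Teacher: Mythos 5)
Your overall route is the same one the paper takes in Section 7: write $\Delta^L(k;\bfi)(\textbf{Y})=\lan x^L_{\bfi}(\textbf{Y})u_{\Lm_d},\ovl{u_{\leq k}}\,u_{\Lm_d}\ran$, realize $V(\Lm_d)$ inside $\wedge^dV(\Lm_1)$, push the wedge $v_1\wedge\cdots\wedge v_d$ through the $m$ cycles one at a time (Lemma \ref{xmprod2}), and organize the surviving terms combinatorially (the paper encodes them as the paths of Definition \ref{Dpathdef} and Proposition \ref{pathlemD}, then telescopes the edge labels into the monomials $D(l_i,k_i)$ in Lemma \ref{kpro2d}). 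Most of your plan would execute.

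There is, however, one concrete gap, and you have mislocated the main obstacle. You assert that ``antisymmetry of the wedge product and the non-collision of the $d$ propagating indices force the ordering (\ref{D-cond1})'' and that ``the pairing selects exactly the admissible wedges.'' Two distinct mechanisms are hiding there. First, the out-of-order terms are not killed because two slots collide (they generally carry distinct basis vectors); the paper cancels them pairwise via an explicit involution $\tau$ swapping two adjacent slots that carry equal coefficients (proof of Lemma \ref{drc}(i)). That part you could reasonably reconstruct. The genuinely type-${\rm D}$ difficulty is that wedges in which adjacent slots carry $v_r$ and $v_{\ovl{r}}$ are \emph{not} annihilated by antisymmetry, are \emph{not} excluded by the partial-order condition $k_i\ngeq k_{i+1}$ of (\ref{D-order}) (since $r$ and $\ovl{r}$ are incomparable, $r\ngeq\ovl{r}$ holds), and are \emph{not} automatically discarded by the final pairing term-by-term: one must prove that such intermediate configurations occurring too early, i.e.\ at stage $s\leq m-r+j-1$, can never propagate to $\ovl{u_{\leq k}}\,u_{\Lm_d}$. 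This is the content of Lemma \ref{drc}(ii),(iii), proved by a separate induction on the cycle index, and it is exactly what licenses restricting to the path set $X_d(m,m-1)$. Your sketch never confronts these terms; the ``careful case analysis at the forked node'' you flag (the shapes of $D(l,r-1)$, $D(l,r)$, $D(l,\ovl{r-1})$) is by comparison a routine computation. A secondary caveat: the claim that the $i$-th slot contributes exactly $D(l_i,k_i)$ is not true slot-by-slot once the slot enters the barred region; the single-step labels only telescope to $\prod_i D(m-l_i,k_i)$ after a cross-slot cancellation (see the extra factors in the proof of Lemma \ref{kpro2d}(iii)), which relies on the rigidity statement $a^{(m)}_i=\cdots=a^{(r-i+1)}_i=\ovl{d-i+1}$ of Lemma \ref{fixedeqlem}(i).
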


\begin{thm}\label{thmd2}
In the setting of Theorem \ref{thm1}, suppose that $i_k=r$ (resp. $i_k=r-1$). We have
\[
\Delta^L(k;{\rm \bf{i}})(\textbf{Y})=\sum_{(*)} D(m-1,\ovl{k_1})D(m-2,\ovl{k_2})\cdots D(m-s,\ovl{k_s})D(m-s,r+1),
\]
where $(*)$ is the condition for $s\in\mathbb{Z}_{\geq0}$ and $k_i$ $(1\leq i\leq s)$ $:$ 
\begin{equation}\label{D-spin-set}
0\leq s\leq m-1\ {\rm and}\ s\ {\rm is\ even\ (resp.\ odd),} \qq 1\leq k_1<k_2<\cdots<k_s\leq r.
\end{equation}
\end{thm}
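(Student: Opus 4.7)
The plan is to express $\Delta^L(k;\mathbf{i})(\mathbf{Y})$ via the bilinear-form formula \eqref{minor-bilin} as a matrix coefficient on the spin module, and then extract the coefficient by a direct expansion. First I would invoke Lemma \ref{gmlem} to reduce to the case $i_n = i_k = d \in \{r-1, r\}$, so that
$$\Delta^L(k;\mathbf{i})(\mathbf{Y}) = \bigl\langle x^L_{\mathbf{i}}(\mathbf{Y}) \cdot u_{\Lambda_d},\; \overline{u_{\leq k}} \cdot u_{\Lambda_d}\bigr\rangle$$
on $V(\Lambda_d) \cong V_{{\rm sp}}^{(\pm, r)}$. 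Using \eqref{Dsp-f1}--\eqref{Dsp-f2} together with the explicit reduced expression and the action $\overline{s_i} = x_i(-1)y_i(1)x_i(-1)$, the target vector $\overline{u_{\leq k}} \cdot u_{\Lambda_d}$ is identified as a specific sign sequence in $\mathbf{B}_{{\rm sp}}^{(\pm,r)}$, whose product sign (equivalently, membership in $V_{{\rm sp}}^{(+,r)}$ or $V_{{\rm sp}}^{(-,r)}$) is forced by $d$.

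The key simplification is that on the spin representation every $f_i$ squares to zero on each basis vector (for $i<r$ a flip destroys the $(+,-)$ pattern, and for $i=r$ a flip destroys the $(+,+)$ pattern at positions $r-1,r$), so $y_i(t) = 1 + tf_i$ and therefore $x_{-i}(t) = (1 + tf_i)\,\alpha_i^\vee(t^{-1})$. Expanding
$$x^L_{\mathbf{i}}(\mathbf{Y}) = \prod_{j=1}^{n} \bigl(1 + Y_{l_j,\, i_j}\, f_{i_j}\bigr)\,\alpha^\vee_{i_j}\bigl(Y_{l_j,\, i_j}^{-1}\bigr)$$
produces a sum over subsets $S \subseteq \{1, \ldots, n\}$ encoding which factors activate $f_{i_j}$ rather than the identity. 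After shuffling the Cartan scalings to the right using $\alpha_j^\vee(t) f_i \alpha_j^\vee(t)^{-1} = t^{-a_{ji}} f_i$, each such term gives a single Laurent monomial in $\mathbf{Y}$ acting on $u_{\Lambda_d}$ and producing a specific spin basis vector; only the subsets whose image equals $\overline{u_{\leq k}} \cdot u_{\Lambda_d}$ survive.

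The combinatorial heart of the proof is a bijection between these surviving subsets and the tuples $(s;\, 1\leq k_1 < k_2 < \cdots < k_s \leq r)$ satisfying \eqref{D-spin-set}: $s$ counts the activations, $k_j$ records the sign-position toggled by the $j$-th activation (read right-to-left), the strict increase $k_1<\cdots<k_s$ reflects the fact that a flip at position $k_j$ in an earlier cycle cannot propagate further without being undone by a flip at a strictly smaller position in a later cycle, and the parity condition on $s$ encodes the parity of $f_r$-activations needed to land in the correct spin component $V_{{\rm sp}}^{(+,r)}$ or $V_{{\rm sp}}^{(-,r)}$.

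The hardest step will be verifying that each such subset contributes exactly the monomial $D(m-1, \overline{k_1}) \cdots D(m-s, \overline{k_s})\, D(m-s, r+1)$ with coefficient $1$, and that no cancellation among distinct subsets occurs. To handle this I would induct on $m$: the base case forces $s=0$ and reduces to a direct computation yielding $1/Y_{m,r}$, while the inductive step peels off the first cycle $x^L_{(1,\ldots,r)}(Y_{1,\cdot})$, uses its explicit action on each spin basis vector (computed exactly as in Examples \ref{monorealex1}--\ref{monorealspinex2}), and invokes the induction hypothesis for the remaining $m-1$ cycles. The Cartan scalings from the inactive factors then combine with each activated $f_i$-factor to assemble the ratio $Y_{l,k_j}/Y_{l+1,\,k_j-1}$ defining $D(l,\overline{k_j})$, with the residual tail $1/Y_{m-s,r}$ coming from the purely Cartan contribution of the $(m-s)$-th cycle after the final activation; analogous techniques developed in \cite{KaN:2015, KaN2:2016, Ka1:2016} for types $A$, $B$, $C$ provide the template.
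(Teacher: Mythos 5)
Your proposal follows essentially the same route as the paper: the minor is computed as a matrix coefficient on the spin module via (\ref{minor-bilin}), the product $x^L_{\mathbf{i}}(\mathbf{Y})$ is expanded cycle by cycle using the multiplicity-free nilpotent action on the sign-vector basis, and the surviving terms are matched bijectively with the tuples $(s;k_1<\cdots<k_s)$ --- the paper merely packages your ``subsets of activated factors'' as labelled paths in a directed graph (Definitions \ref{vspesp}--\ref{Dlabeldef-sp}) and reads off each $D$-factor from the edge labels rather than by induction on $m$. One small caveat: your gloss on the parity of $s$ is off (every $f_i$, including $f_r$, flips exactly two signs and so preserves the spin component; the parity is instead forced by the number of minus signs in the target vector $\overline{u_{\leq k}}\cdot u_{\Lambda_d}$ relative to the highest weight vector), but this does not affect the substance of the argument.
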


\subsection{The set $X_d(m,m-1)$ of paths}

In this subsection, we shall introduce a set $X_d(m,m-1)$ of paths
which will correspond to the set of the monomials in $\Delta^L(k;{\rm \bf{i}})(\textbf{Y})$. Let
$m$ and $d$ be the positive integers as in 
Sect.\ref{gmc}. Let $J_{{\rm D}}:=\{j,\ovl{j}|\ 1\leq j\leq r\}$ be as in Subsection \ref{SectFundD} and for $l\in\{1,2,\cdots,r\}$, set $|l|=|\ovl{l}|=l$.
\begin{defn}
Let us define the directed graph $(V_d,E_d)$ as follows:
The set $V_d=V_d(m)$ of vertices is defined by 
\[V_d(m):=\{{\rm vt}(m-s;a^{(s)})|\ 0\leq s\leq m,\
 a^{(s)}=(a^{(s)}_1,a^{(s)}_2,\cdots,a^{(s)}_d)\in J^d_D \}. 
\]
And we define the set $E_d=E_d(m)$ of directed edges as 
\begin{multline*} E_d(m):=\{{\rm vt}(m-s;a^{(s)})\rightarrow
{\rm vt}(m-s-1;a^{(s+1)})\\
|\ 0\leq s\leq m-1,\ {\rm vt}(m-s;a^{(s)}),\ {\rm vt}(m-s-1;a^{(s+1)})\in V_d(m)\}.
\end{multline*}
\end{defn}
\begin{defn}\label{Dpathdef}
Let $X_d(m,m-1)$ be the set of directed paths $p$
\begin{multline*}p={\rm vt}(m;a^{(0)}_1,\cdots,a^{(0)}_d)
\rightarrow{\rm vt}(m-1;a^{(1)}_1,\cdots,a^{(1)}_d)\rightarrow {\rm vt}(m-2;a^{(2)}_1,
\cdots,a^{(2)}_d)\\
\rightarrow
\cdots\rightarrow{\rm vt}(1;a^{(m-1)}_1,\cdots,a^{(m-1)}_d)
\rightarrow{\rm vt}(0;a^{(m)}_1,\cdots,a^{(m)}_d),
\end{multline*}
which satisfy the following conditions (i)-(v) : For $s\in\mathbb{Z}$ $(0\leq s\leq m)$,

\begin{itemize}
\item[(i)] $a^{(s)}_{1}<a^{(s)}_{2}<\cdots<a^{(s)}_{d}$ in the order (\ref{D-order}), 
\item[(ii)] If $1\leq a^{(s)}_{\zeta}\leq r-2$, then $a^{(s+1)}_{\zeta}=a^{(s)}_{\zeta}$ or $a^{(s)}_{\zeta}+1$. If $r-1\leq a^{(s)}_{\zeta}\leq \ovl{1}$, then $a^{(s)}_{\zeta}\leq a^{(s+1)}_{\zeta}\leq\ovl{1}$,
\item[(iii)] $(a^{(0)}_1,a^{(0)}_2,\cdots,a^{(0)}_d)=(1,2,\cdots,d)$ and
\[ (a^{(m)}_1,\cdots,a^{(m)}_d)=
\begin{cases}
(m,m+1,\cdots,r-1,\ovl{d-r+m},\cdots,\ovl{2},\ovl{1}) & {\rm if}\ m+d>r,\\
(m,m+1,\cdots,m+d-1) & {\rm if}\ m+d\leq r,
\end{cases}
\]
\item[(iv)] If $a^{(s+1)}_{\zeta}\in\{r,\ovl{r},\ovl{r-1},\cdots,\ovl{1}\}$, then $a^{(s+1)}_{\zeta}\ngeq a^{(s)}_{\zeta+1}$ in the order (\ref{D-order}). 
\end{itemize}

We say that two vertices ${\rm vt}(m-s;a^{(s)}_1,\cdots,a^{(s)}_d)$ and ${\rm vt}(m-s-1;a^{(s+1)}_1,\cdots,a^{(s+1)}_d)$ in $V_d(m)$ are {\it connected} if the conditions (i), (ii) and (iv) in Definition \ref{Dpathdef} are satisfied. And for a path $p$ as above, we call the sequence $a^{(0)}_{i}\rightarrow a^{(1)}_{i}\rightarrow a^{(2)}_{i}\rightarrow \cdots\rightarrow a^{(m)}_{i}$ the $i$-sequence of $p$.
\end{defn}

We define Laurent monomials associated with edges in $E_d(m)$.
\begin{defn}\label{Dlabeldef}

\begin{itemize}
\item[(i)] For each $s$ $(0\leq s\leq m)$ and $i,j\in J_{{\rm D}}$ with $i\leq j$, we set
\[Q^{(s)}(i\rightarrow j):=
\begin{cases}
\frac{Y_{m-s,i-1}}{Y_{m-s,i}} & {\rm if}\ j=i,\ i\in[1,r-2], \\
1 & {\rm if}\ j>i,\ i\in[1,r-2],
\end{cases}
\]
\[
Q^{(s)}(r-1\rightarrow j):=
\begin{cases}
\frac{Y_{m-s,r-2}}{Y_{m-s,r-1}Y_{m-s,r}} & {\rm if}\ j=r-1,\\
\frac{1}{Y_{m-s,r}} & {\rm if}\ j=r, \\
\frac{1}{Y_{m-s,|j|-1}} & {\rm if}\ j\in\{\ovl{r},\ovl{r-1},\cdots,\ovl{1}\}, \\
\end{cases} 
\]
\[Q^{(s)}(r\rightarrow j):=
\begin{cases}
\frac{Y_{m-s,r-1}}{Y_{m-s,r}} & {\rm if}\ j=r, \\
\frac{Y_{m-s,r-1}}{Y_{m-s,|j|-1}} & {\rm if}\ j\in\{\ovl{r-1},\cdots,\ovl{1}\}, \\
\end{cases} 
\ Q^{(s)}(i\rightarrow j):=
\begin{cases}
\frac{Y_{m-s,r}}{Y_{m-s,|j|-1}} & {\rm if}\ i=\ovl{r}, \\
\frac{Y_{m-s,r}Y_{m-s,r-1}}{Y_{m-s,|j|-1}} & {\rm if}\ i=\ovl{r-1}, \\
\frac{Y_{m-s,|i|}}{Y_{m-s,|j|-1}} & {\rm if}\ i\in\{\ovl{r-2},\cdots,\ovl{2},\ovl{1}\}.
\end{cases}
\]
For an edge $e^{i_1,\cdots,i_d}_{j_1,\cdots,j_d}={\rm vt}(m-s;i_1,\cdots,i_d)\rightarrow {\rm vt}(m-s-1;j_1,\cdots,j_d)$ in $E_d(m)$ such that $i_k\leq j_k$ $(k=1,2,\cdots,d)$, we define the {\it label} $Q^{(s)}(e^{i_1,\cdots,i_d}_{j_1,\cdots,j_d})$ {\it of the edge} $e^{i_1,\cdots,i_d}_{j_1,\cdots,j_d}$ as
\[ Q^{(s)}(e^{i_1,\cdots,i_d}_{j_1,\cdots,j_d}):=\prod^d_{k=1} Q^{(s)}(i_k\rightarrow j_k). \]
\item[(ii)] Let $p\in X_d(m,m-1)$ be a path:
\begin{multline*}p={\rm vt}(m;a^{(0)}_1,\cdots,a^{(0)}_d)
\rightarrow{\rm vt}(m-1;a^{(1)}_1,\cdots,a^{(1)}_d)\rightarrow {\rm vt}(m-2;a^{(2)}_1,
\cdots,a^{(2)}_d)\\
\rightarrow
\cdots\rightarrow{\rm vt}(1;a^{(m-1)}_1,\cdots,a^{(m-1)}_d)
\rightarrow{\rm vt}(0;a^{(m)}_1,\cdots,a^{(m)}_d).
\end{multline*}
For each $s$ $(0\leq s\leq m-1)$, we denote the label of the $(m-s)$ th edge ${\rm vt}(m-s;a^{(s)}_1,\cdots,a^{(s)}_d)\rightarrow{\rm vt}(m-s-1;a^{(s+1)}_1,\cdots,a^{(s+1)}_d)$ of $p$ by $Q^{(s)}(p)$.
And we define the {\it label} $Q(p)$ {\it of the path} $p$ as the total product:
\[ Q(p):=\prod_{s=0}^{m-1}Q^{(s)}(p). 
\]
\item[(iii)] For a subpath $p'$
\[
p'={\rm vt}(m-s';a^{(s')})\rightarrow{\rm vt}(m-s'-1;a^{(s'+1)})\rightarrow \\
\cdots\rightarrow{\rm vt}(m-s'';a^{(s'')})
\]
of $p$ $(0\leq s'<s''\leq m)$, we define the {\it label} {\it of the subpath} $p'$ as 
\[
Q(p'):=\prod_{s=s'}^{s''-1}Q^{(s)}(p). 
\]
\end{itemize}

\end{defn}

\subsection{One-to-one correspondence between paths in $X_d(m,m-1)$ and monomials in $\Delta^L(k;{\rm \bf{i}})(\textbf{Y})$}

We define $x_{-i}(Y)=y_i(Y)\cdot Y^{-h_i}$ in $(\ref{alxmdef})$ $(Y\in \mathbb{C}^{\times})$. 
For $1\leq s\leq m$ and $i_1,\cdots,i_d\in J_D=\{i,\ovl i|1\leq i\leq r\}$, let us define
\begin{equation}\label{xmdef1}
x^{(s)}_{-[1,r]}:=x_{-1}(Y_{s,1})\cdots x_{-r}(Y_{s,r}).
\end{equation}
and 
\begin{equation}\label{xmdef2}
 (s;i_1,i_2,\cdots,i_d):=\lan x^{(1)}_{-[1,r]} x^{(2)}_{-[1,r]}\cdots x^{(s)}_{-[1,r]}
(v_{i_1}\wedge\cdots\wedge v_{i_d}), \ u_{\leq k}(v_1\wedge\cdots\wedge v_d)  \ran . 
\end{equation}
Since $i_k$ belongs to $(m-1)$ th cycle of {\rm \bf{i}}, by (\ref{smpl}) we have
\begin{equation}\label{xmdef3}
u_{\leq k}(v_1\wedge\cdots\wedge v_d)=
\begin{cases}
v_m\wedge\cdots\wedge v_{m+d-1} & {\rm if}\ d\leq r-m, \\
v_m\wedge\cdots\wedge v_{r-1}\wedge v_{\ovl{d-r+m}}\wedge\cdots\wedge v_{\ovl{1}} & {\rm if}\ d>r-m.
\end{cases}
\end{equation}
We can verify $\Delta^L(k;{\rm \bf{i}})(\textbf{Y})=(m;1,\cdots,d)$. Following Subsection \ref{SectFundD}, we get the following:

\begin{lem}\label{xmprod2}We obtain
\[
x^{(s)}_{-[1,r]} v_j= \frac{Y_{s,j-1}}{Y_{s,j}}v_j+v_{j+1},\q x^{(s)}_{-[1,r]} v_{\ovl{j}}=\sum^{j}_{i=1} \frac{Y_{s,j}}{Y_{s,i-1}}v_{\ovl{i}}\q (1\leq j\leq r-2)
\]
\[
x^{(s)}_{-[1,r]} v_{r-1}= \frac{Y_{s,r-2}}{Y_{s,r-1}Y_{s,r}}v_{r-1}+\frac{1}{Y_{s,r}}v_{r}+\sum^r_{i=1} \frac{1}{Y_{s,i-1}}v_{\ovl{i}},\ \ x^{(s)}_{-[1,r]} v_{r}=\frac{Y_{s,r-1}}{Y_{s,r}}v_{r}+\sum^{r-1}_{i=1} \frac{Y_{s,r-1}}{Y_{s,i-1}}v_{\ovl{i}}
\]
\[
x^{(s)}_{-[1,r]} v_{\ovl{r}}=\sum^{r}_{i=1} \frac{Y_{s,r}}{Y_{s,i-1}}v_{\ovl{i}},\q 
x^{(s)}_{-[1,r]} v_{\ovl{r-1}}=\sum^{r-1}_{i=1} \frac{Y_{s,r-1}Y_{s,r}}{Y_{s,i-1}}v_{\ovl{i}}.
\]
\end{lem}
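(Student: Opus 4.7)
The plan is to prove the six identities by direct computation, applying the factors of $x^{(s)}_{-[1,r]} = x_{-1}(Y_{s,1})\cdots x_{-r}(Y_{s,r})$ to each basis vector successively, starting from the rightmost operator $x_{-r}(Y_{s,r})$. The key observation is that on $V(\Lambda_1)$ each lowering operator $f_i$ connects a single pair of basis vectors and annihilates everything else, so $f_i^2 = 0$ on $V(\Lambda_1)$. Consequently $y_i(t) = \exp(tf_i) = 1 + tf_i$ on $V(\Lambda_1)$, and combined with $\alpha_i^\vee(t)v = t^{\wt(v)(h_i)}v$ we get the basic rule
\[
x_{-k}(Y_{s,k})\, v \;=\; Y_{s,k}^{-\wt(v)(h_k)}\bigl(v + Y_{s,k}\, f_k v\bigr),
\]
valid for every basis vector $v$. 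This reduces the lemma to finite bookkeeping using only the weights (\ref{D-wtv}) and the Chevalley rules (\ref{D-f1}), (\ref{D-f2}).

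First I would dispose of the cases $v_j$ and $v_{\bar j}$ with $1 \le j \le r-2$. Here $x_{-k}(Y_{s,k})$ acts trivially whenever $k$ lies outside a short window around $j$, so only a few factors contribute. For $v_j$ only $x_{-j}$ and $x_{-(j-1)}$ act non-trivially, giving $\frac{Y_{s,j-1}}{Y_{s,j}}v_j + v_{j+1}$ directly. For $v_{\bar j}$ a cascade appears: starting from $x_{-j}$, each successive operator $x_{-(j-1)}, x_{-(j-2)}, \dots, x_{-1}$ generates one new basis vector $v_{\overline{j-1}}, v_{\overline{j-2}}, \dots, v_{\bar 1}$ while rescaling the previously generated one by the appropriate power of $Y_{s,\bullet}$. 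A short induction yields the telescoping sum $\sum_{i=1}^{j} \frac{Y_{s,j}}{Y_{s,i-1}} v_{\bar i}$.

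The more delicate cases are $v_{r-1}, v_r, v_{\bar r}, v_{\overline{r-1}}$, which involve the branch at the end of the ${\rm D}_r$ Dynkin diagram. Here I would carry out the first two steps $x_{-(r-1)}(Y_{s,r-1})\,x_{-r}(Y_{s,r})$ by hand on each of these four vectors, using the explicit rules $f_r v_{r-1} = v_{\bar r}$, $f_r v_r = v_{\overline{r-1}}$, $f_{r-1}v_{r-1} = v_r$ and $f_{r-1}v_{\bar r} = v_{\overline{r-1}}$. The resulting intermediate states involve only vectors from the tail $\{v_{r-1}, v_r, v_{\bar r}, v_{\overline{r-1}}\}$ with explicit coefficients; from that point onward $x_{-(r-2)}, \dots, x_{-1}$ act exactly as in the $v_{\bar j}$-cascade of the previous paragraph, so the same inductive step finishes each computation and produces the claimed leading factors $\frac{1}{Y_{s,r}}$, $\frac{Y_{s,r-1}}{Y_{s,r}}$, $Y_{s,r}$, and $Y_{s,r-1}Y_{s,r}$.

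The main obstacle will be the boundary behaviour at the branch: one must verify by weight considerations that $f_r v_{\bar r} = 0$, $f_r v_{\overline{r-1}} = 0$, $f_{r-1} v_r = 0$ and $f_{r-1} v_{\overline{r-1}} = 0$, otherwise spurious terms would appear and destroy the explicit coefficient $\frac{Y_{s,r-2}}{Y_{s,r-1}Y_{s,r}}$ in front of $v_{r-1}$, as well as the analogous leading coefficients in the other tail cases. These vanishings follow from the fact that the shifted weights $\wt(v) - \alpha_r$ or $\wt(v) - \alpha_{r-1}$ do not belong to the weight set of $V(\Lambda_1)$. Once they are confirmed, the approach runs in close parallel with the analogous computations performed for types ${\rm B}_r$ and ${\rm C}_r$ in \cite{KaN2:2016, Ka1:2016}, so I would organise the bookkeeping in exactly the same way.
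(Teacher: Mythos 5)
Your proposal is correct and follows essentially the same route as the paper, which offers no written proof at all and simply derives the lemma from the explicit $\frg$-action on $V(\Lm_1)$ in Subsection \ref{SectFundD}; your reduction to the rule $x_{-k}(Y_{s,k})v=Y_{s,k}^{-\wt(v)(h_k)}(v+Y_{s,k}f_kv)$ and the subsequent cascade bookkeeping reproduce exactly the stated formulas. One cosmetic point: each $f_i$ actually connects \emph{two} disjoint pairs of basis vectors (e.g.\ $v_i\mapsto v_{i+1}$ and $v_{\ovl{i+1}}\mapsto v_{\ovl i}$), not one, but since no target is ever a source the key fact $f_i^2=0$ on $V(\Lm_1)$ still holds and your argument is unaffected.
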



\begin{lem}\label{drc}

\begin{itemize} Let $(s;i_1,i_2,\cdots,i_d)$ $(1\leq s\leq m)$ be as in $(\ref{xmdef2})$,
\item[(i)] if $i_1\ngeq \cdots \ngeq i_d$ and there is no number $j\in[1,d-1]$ such that $i_j=r$, $i_{j+1}=\ovl{r}$ or $i_j=\ovl{r}$, $i_{j+1}=r$ then
\begin{equation}\label{xmlem120}
(s;i_1,i_2,\cdots,i_d)=\sum_{V} \prod^d_{t=1}Q^{(m-s)}(i_t\rightarrow j_t)(s-1;j_1,\cdots,j_d),
\end{equation}
where $V:=\{(j_1,\cdots,j_d)|\ i_t\leq j_t,\ {\rm if}\ i_t\leq r-2\ {\rm then}\ j_t=i_t\ {\rm or}\ i_t+1,\ {\rm if}\ i_t\geq r-1\ {\rm then}\ j_t\ngeq i_{t+1} \}$.
\item[(ii)] We suppose that $s\leq m-r+j-1$ and $i_1<\cdots<i_{j-1}$ for some $1\leq j\leq d$.
If $i_{j-1}\leq r-1$, $\ovl{r-1}\leq i_{j+1}$ and either $i_{j}=r$ or $i_{j}=\ovl{r}$, then $(s;i_1,i_2,\cdots,i_d)=0$.
\item[(iii)] We suppose that $i_1\ngeq \cdots \ngeq i_d$ and $s\leq m-r+j-1$ for some $1\leq j\leq d$. If $i_{j-2}\leq r-1$, $\ovl{r-1}\leq i_{j+1}$ and either $i_{j-1}=r$, $i_{j}=\ovl{r}$ or $i_{j-1}=\ovl{r}$, $i_{j}=r$ then $(s;i_1,i_2,\cdots,i_d)=0$.
\end{itemize}

\end{lem}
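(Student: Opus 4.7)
The plan for part (i) is a direct expansion. Since $x^{(s)}_{-[1,r]}$ acts on the wedge power via $g\cdot(v_{i_1}\wedge\cdots\wedge v_{i_d}) = (gv_{i_1})\wedge\cdots\wedge(gv_{i_d})$, I would substitute Lemma \ref{xmprod2} into each factor and distribute the wedge. The result is a sum over tuples $(j_1,\ldots,j_d)$, where each $j_t$ runs over the support of $x^{(s)}_{-[1,r]}v_{i_t}$, weighted by the coefficients $\prod_t Q^{(m-s)}(i_t\to j_t)$. Feeding this into the definition (\ref{xmdef2}), each such tuple contributes $\prod_t Q^{(m-s)}(i_t\to j_t)\cdot(s-1;j_1,\ldots,j_d)$, so proving (i) reduces to matching the surviving tuples with the set $V$.

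For that identification, I would show that any tuple outside $V$ either produces a wedge $v_{j_1}\wedge\cdots\wedge v_{j_d}$ with a repeated index, or, after sorting, contributes twice with opposite signs and cancels. The hypothesis $i_1\ngeq\cdots\ngeq i_d$ together with the absence of adjacent $(r,\bar r)$ or $(\bar r,r)$ is exactly what rules out unwanted coincidences: for $i_t\leq r-2$, Lemma \ref{xmprod2} already restricts $j_t\in\{i_t,i_t+1\}$; for $i_t\geq r-1$, the extra constraint $j_t\ngeq i_{t+1}$ is the precise condition that $j_t$ lie strictly below $j_{t+1}$ in the partial order (\ref{D-order}), so that no sign cancellation with a companion term occurs. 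A case analysis following the five formulas of Lemma \ref{xmprod2} should complete the argument.

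For parts (ii) and (iii), I would use a reachability argument combined with a few explicit local computations. In (ii) the indices $i_{j-1}\leq r-1$, $i_j\in\{r,\bar r\}$, $i_{j+1}\geq \overline{r-1}$ confine the evolution of the $j$-th slot to $\{r\}\cup\{\bar i:1\leq i\leq r-1\}$ and of the $(j+1)$-th slot to the strictly barred part, because Lemma \ref{xmprod2} shows $x^{(s')}_{-[1,r]}$ can only lower a barred index further. Comparing with the explicit form (\ref{xmdef3}) of $u_{\leq k}(v_1\wedge\cdots\wedge v_d)$, reaching the prescribed values at the $j$-th and $(j+1)$-th slots costs at least $m-r+j-1$ cycles, so for strictly smaller $s$ the pairing vanishes. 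Part (iii) I would reduce to (ii) through the identity
\begin{equation*}
x^{(s)}_{-[1,r]}(v_r\wedge v_{\bar r}) = \sum_{i=1}^{r-1}\frac{Y_{s,r-1}}{Y_{s,i-1}}\, v_r\wedge v_{\bar i},
\end{equation*}
which arises because the symmetric coefficient attached to the antisymmetric $v_{\bar i}\wedge v_{\bar{i'}}$ component cancels; this effectively ``consumes'' one cycle on the $(r,\bar r)$ block and converts the configuration into one to which (ii) applies after an index shift.

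The main technical obstacle I anticipate is the bookkeeping in part (i): one must group the expanded wedge terms by their sorted representative and verify that, outside $V$, coefficients organize into cancelling orbits under transpositions. The mixed regime in which some indices lie in $[1,r-2]$ (with only two possible $j_t$) while others lie in $\{r-1,r,\bar r,\overline{r-1},\ldots,\bar 1\}$ (with many possible $j_t$) requires the most delicate case analysis, especially around the boundary indices $r-1$ and $\overline{r-1}$, where Lemma \ref{xmprod2} takes special forms and the partial rather than total order on $J_D$ can spoil a naive comparison.
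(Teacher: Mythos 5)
Your treatment of (i) follows the paper's route exactly: expand each factor via Lemma \ref{xmprod2}, distribute the wedge, and cancel the tuples violating $j_t\ngeq i_{t+1}$ in pairs under the transposition that swaps $j_l$ and $j_{l+1}$ at the first offending index (the paper packages this as an involution $\tau$ on the set of bad tuples). Your (ii) is also the paper's idea in substance --- a count of how many barred slots can be produced in $s$ steps against the $d-r+m$ barred slots forced by the target vector in (\ref{xmdef3}) --- though the paper runs it as an induction on $s$ with the sharper bound $s+d-j\leq m-r+d-1$; note that your phrase ``for strictly smaller $s$'' does not obviously cover the boundary case $s=m-r+j-1$, which the hypothesis includes.

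The genuine gap is in (iii). The identity you rely on is false. Computing from Lemma \ref{xmprod2},
\[
x^{(s)}_{-[1,r]}\bigl(v_r\wedge v_{\ovl r}\bigr)=\sum_{j=1}^{r}\frac{Y_{s,r-1}}{Y_{s,j-1}}\,v_r\wedge v_{\ovl j}\;+\;\sum_{i=1}^{r-1}\frac{Y_{s,r}}{Y_{s,i-1}}\,v_{\ovl i}\wedge v_{\ovl r}.
\]
The symmetric-coefficient cancellation you invoke kills only the components $v_{\ovl i}\wedge v_{\ovl j}$ with $i,j\leq r-1$; the $j=r$ term of the first sum is $v_r\wedge v_{\ovl r}$ itself with coefficient $1$, and the entire second family $v_{\ovl i}\wedge v_{\ovl r}$ survives as well. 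So the $(r,\ovl r)$ block is not ``consumed'' in one cycle: it persists with coefficient $1$ through arbitrarily many applications of $x_{-[1,r]}$, and your one-step conversion to a configuration covered by (ii) does not go through. This is precisely why the paper proves (iii) by induction on $s$: after one expansion the surviving terms have either $l_{j-1}=r$ or $l_j=\ovl r$; the term where both persist is disposed of by the induction hypothesis, the terms with $l_{j-1}=r$ and $l_{j-2}\leq r-1$ by (ii), and the terms with $l_j=\ovl r$ and $l_{j-1}\leq r-1$ by one further expansion followed by (ii). You would need to restore this inductive structure (or some equivalent bookkeeping of the persisting block) for your argument to close.
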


\begin{proof}
(i) Let us calculate $x^{(s)}_{-[1,r]}(v_{i_1}\wedge\cdots\wedge v_{i_d})$. It follows from Lemma \ref{xmprod2} and Definition \ref{Dlabeldef} (i) that
\begin{equation}\label{xmlem12}
x^{(s)}_{-[1,r]}(v_{i_1}\wedge\cdots\wedge v_{i_d})=\sum_{V'} \prod^{d}_{t=1} Q^{(m-s)}(i_t\rightarrow j_t)v_{j_1}\wedge\cdots\wedge v_{j_d},  
\end{equation}
where $(j_1,\cdots,j_d)$ runs over the set $V':=\{(j_1,\cdots,j_d)|\ i_t\leq j_t,\ {\rm if}\ i_t\leq r-2\ {\rm then}\ j_t=i_t\ {\rm or}\ i_t+1\}$. Now we set $V'':=\{(j_1,\cdots,j_d)\in V'|\ \exists l\ {\rm such\ that}\ |j_{l}|\leq |i_{l+1}|\}$. We define the map $\tau:V''\rightarrow V''$ as follows: Take $(j_1,\cdots,j_d)\in V''$. Let $l$ $(1\leq l\leq d-1)$ be the index such that $|j_{1}|>|i_{2}|,\ |j_{2}|>|i_{3}|,\cdots, |j_{l-1}|>|i_{l}|$ and $|j_{l}|\leq |i_{l+1}|$. Since $|j_{l+1}|\leq |i_{l+1}|$ by the definition of $V'$, we have $(j_1,\cdots,j_{l+1},j_{l},\cdots,j_d)\in V''$. So, we define $\tau(\cdots,j_{l},j_{l+1},\cdots):= (\cdots,j_{l+1},j_{l},\cdots)$. We can easily see that $\tau^2=id_{V''}$.

In $(\ref{xmlem12})$, $(v_{j_1}\wedge \cdots\wedge j_l\wedge j_{l+1},\cdots,j_d)$ and $(v_{j_1}\wedge\cdots\wedge j_{l+1}\wedge j_l\wedge\cdots\wedge j_d)$ have the same coefficient and then they are cancelled, which yields that
\[x^{(s)}_{-[1,r]}(v_{i_1}\wedge\cdots\wedge v_{i_d})=\sum_{V} \prod^{d}_{t=1} Q^{(m-s)}(i_t\rightarrow j_t)v_{j_1}\wedge\cdots\wedge v_{j_d}, \]
since $V=V'\setminus V''$. The definition (\ref{xmdef2}) implies our desired result. 

(ii) We use induction on $s$. We suppose that $i_{j-1}\leq r-1$, $i_{j}=r$. By the same argument as in (i), $(s;i_1,i_2,\cdots,i_d)$ is a linear combination of $\{(s-1;l_1,\cdots,l_d)\}$ such that $l_{j-2}\leq r-1$ and $l_{j-1}\leq r-1$ or $l_{j-1}=\ovl{r}$. In the case $l_{j-1}=\ovl{r}$, $(s-1;l_1,\cdots,l_d)=0$ by the assumption of induction. Hence, we may assume that $l_{j-1}\leq r-1$. Then $(s-1;l_1,\cdots,l_d)$ is a linear combination of $(0;\zeta_1,\cdots,\zeta_d)$ such that 
$\#\{1\leq \al\leq d|\ \ovl{r}\leq\zeta_{\al}\leq\ovl{1}\}\leq(s-2)+(d-j+2)=s+d-j\leq m-r+d-1$, which implies that $(0;\zeta_1,\cdots,\zeta_d)=0$ by (\ref{xmdef2}) and (\ref{xmdef3}). Hence we obtain $(s;i_1,i_2,\cdots,i_d)=0$. Similarly, we can verify that if $i_{j-1}\leq r-1$, $i_{j}=\ovl{r}$ then $(s;i_1,i_2,\cdots,i_d)=0$.

\nd
(iii) Similar to (ii), we use induction on $s$. We suppose that $i_{j-1}=r$, $i_{j}=\ovl{r}$.
By the same argument as in (i), $(s;i_1,i_2,\cdots,i_d)$ is a linear combination of $\{(s-1;l_1,\cdots,l_d)\}$ such that $i_{t+1}\nleq l_{t}$ and either $l_{j-1}=r$ or $l_{j}=\ovl{r}$. If $l_{j-1}=r$ then we may assume that $l_{j-2}\leq r-1$ by the assumption of induction. In this case, however, $(s-1;l_1,\cdots,l_d)=0$ by (ii), which implies that $(s;i_1,i_2,\cdots,i_d)=0$. If $l_{j}=\ovl{r}$ then we also get $l_{j-2}\leq r-1$, and $(s-1;l_1,\cdots,l_d)$ is a linear combination of $\{(s-2;m_1,\cdots,m_d)\}$ such that $m_{j-2}=r-1$ or $r$. Using (ii), we have $(s-2;m_1,\cdots,m_d)=0$, which implies $(s;i_1,i_2,\cdots,i_d)=0$. 

\end{proof}

Supposing $s\leq m-r+j-1$ and writing $(m;1,\cdots,d)$ as a linear combination of $\{(s;i_1,\cdots,i_d)|\ 1\leq i_1,\cdots,i_d\leq\ovl{1}\}$, then the coefficient of $(s;i_1,\cdots,i_d)$ such that either $i_1<\cdots<i_{j-1}=r$, $i_{j}=\ovl{r}$ or $i_1<\cdots<i_{j-1}=\ovl{r}$, $i_{j}=r$ is $0$. 

\begin{prop}\label{pathlemD} We have
\[ \Delta^L(k;{\rm \bf{i}})(\textbf{Y})=\sum_{p\in X_d(m,m-1)} Q(p). \]
\end{prop}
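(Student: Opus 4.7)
The plan is to first identify $\Delta^L(k;\mathbf{i})(\mathbf{Y})$ with the pairing $(m;1,2,\ldots,d)$ (as already observed just after (\ref{xmdef3})), and then peel off one level at a time using Lemma \ref{drc}(i). Each peel produces the product $\prod_t Q^{(m-s)}(i_t\to j_t)$, which by Definition \ref{Dlabeldef}(i) is exactly the label of an edge in the graph $(V_d(m),E_d(m))$. After $m$ peels we reach level $s=0$, where the pairing $(0;i_1,\ldots,i_d)$ collapses by (\ref{xmdef2}) together with (\ref{xmdef3}) to a Kronecker delta: nonzero only when $(i_1,\ldots,i_d)$ equals the terminal tuple $(a^{(m)}_1,\ldots,a^{(m)}_d)$ prescribed by Definition \ref{Dpathdef}(iii).

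Concretely, I would prove by downward induction on $s$ from $s=m$ to $s=0$ the identity
\[
(s;i_1,\ldots,i_d) \;=\; \sum_{p} Q(p),
\]
where $p$ ranges over all subpaths of the graph $(V_d(m),E_d(m))$ starting at ${\rm vt}(s;i_1,\ldots,i_d)$, ending at ${\rm vt}(0;a^{(m)}_1,\ldots,a^{(m)}_d)$, and satisfying the edge constraints (ii), (iv) of Definition \ref{Dpathdef} (so that the strict-order condition (i) is propagated along the way). The base case $s=0$ is immediate from (\ref{xmdef2}) and (\ref{xmdef3}). For the inductive step, Lemma \ref{drc}(i) yields
\[
(s;i_1,\ldots,i_d) \;=\; \sum_{(j_*)\in V}\Bigl(\prod_{t=1}^{d} Q^{(m-s)}(i_t\to j_t)\Bigr)\cdot (s-1;j_1,\ldots,j_d),
\]
in which the prefactor coincides with the label of the edge ${\rm vt}(s;i_*)\to{\rm vt}(s-1;j_*)$ in Definition \ref{Dlabeldef}(i), and the conditions defining $V$ coincide with the edge-admissibility conditions (ii), (iv). Applying the induction hypothesis to each surviving summand and using the multiplicative decomposition $Q(p)=Q^{(m-s)}(\text{first edge})\cdot Q(\text{tail path})$ completes the step. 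Setting $s=m$ and $(i_1,\ldots,i_d)=(1,2,\ldots,d)$ gives the proposition.

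The main obstacle is reconciling the summation set $V$ in Lemma \ref{drc}(i) with the path-admissibility conditions of Definition \ref{Dpathdef}. Three bookkeeping points demand care. First, the inductive hypothesis is formulated only for tuples satisfying the strict-order condition (i); tuples $(j_*)\in V$ failing (i) either produce zero wedge products (repeated indices) or involve the incomparable pair $\{r,\ovl{r}\}$ in adjacent positions, in which case Lemma \ref{drc}(ii), (iii) forces the summand to vanish, consistent with the absence of any path in the graph through such a vertex. Second, one must verify case by case that the entries of $Q^{(s)}(i\to j)$ in Definition \ref{Dlabeldef}(i), especially the exceptional cases $i,j\in\{r-1,r,\ovl{r-1},\ovl{r}\}$, do indeed arise from the explicit actions of $x^{(s)}_{-[1,r]}$ computed in Lemma \ref{xmprod2}. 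Third, the terminal-vertex condition (iii) of Definition \ref{Dpathdef} must be matched with the explicit form of $u_{\leq k}(v_1\wedge\cdots\wedge v_d)$ from (\ref{xmdef3}), which splits naturally into the cases $d\leq r-m$ and $d>r-m$. Once these match-ups are verified, the induction runs smoothly and the proposition follows.
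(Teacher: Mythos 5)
Your proposal is correct and follows essentially the same route as the paper: both identify $\Delta^L(k;{\rm \bf{i}})(\textbf{Y})$ with $(m;1,\ldots,d)$, expand level by level via Lemma \ref{drc}(i), match the summation set $V$ with edge-connectivity and the coefficients with the edge labels of Definition \ref{Dlabeldef}, and evaluate the $s=0$ pairings as $1$ or $0$ via (\ref{xmdef2}) and (\ref{xmdef3}). The paper phrases the iteration as ``repeating the argument'' rather than a formal downward induction, and disposes of the order condition (i) and the incomparable pair $\{r,\ovl{r}\}$ inside Lemma \ref{drc} and the remark following it, exactly as you anticipate in your bookkeeping points.
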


\begin{proof}

We suppose that $1\leq i_1<\cdots<i_d\leq\ovl{1}$ and $1\leq s\leq m$. By the definition of $V$ in Lemma \ref{drc}, we see that $(j_1,\cdots,j_d)\in V$ if and only if the vertices vt$(s-1;j_1,\cdots,j_d)$ and vt$(s;i_1,\cdots,i_d)$ are connected (Definition \ref{Dpathdef}). Further, the coefficient of $(s-1;j_1,\cdots,j_d)$ in $(\ref{xmlem120})$ coincides with the label of the edge $e^{i_1,\cdots,i_d}_{j_1,\cdots,j_d}$ between vt$(s;i_1,\cdots,i_d)$ and vt$(s-1;j_1,\cdots,j_d)$ (Definition \ref{Dlabeldef} (i)). Hence, we get
\begin{equation}\label{plpr1}
 (s;i_1,\cdots,i_d)=\sum_{(j_1,\cdots,j_d)} Q^{(m-s)}(e^{i_1,\cdots,i_d}_{j_1,\cdots,j_d}) \cdot(s-1;j_1,\cdots,j_d),
\end{equation}
where $(j_1,\cdots,j_d)$ runs over the set $\{(j_1,\cdots,j_d)|\ {\rm vt}(s-1;j_1,\cdots,j_d)$ and ${\rm vt}(s;i_1$,
$\cdots,i_d)\ {\rm are\ connected}\}$. Since we know that if $j_l\in\{\ovl{r},\cdots,\ovl{1}\}\cup\{0\}$ then $j_l\leq i_{l+1}$, if $i_l\leq r-1$ then $j_l=i_l$ or $i_l+1$, and $i_{l+1}\leq j_{l+1}$ in $V$, one gets $j_l<j_{l+1}$, and then $j_1<j_2<\cdots<j_d$. The followings is obtained in the same way as (\ref{plpr1}):
\begin{equation}\label{plpr2}
 (s-1;j_1,\cdots,j_d)=\sum_{(k_1,\cdots,k_d)} Q^{(m-s+1)}(e^{j_1,\cdots,j_d}_{k_1,\cdots,k_d}) \cdot(s-2;k_1,\cdots,k_d),
\end{equation}
where $(k_1,\cdots,k_d)$ runs over the set $\{(k_1,\cdots,k_d)|\ {\rm vt}(s-2;k_1,\cdots,k_d)$ and ${\rm vt}(s-1;j_1,\cdots,j_d)\ {\rm are\ connected}\}$ and $e^{j_1,\cdots,j_d}_{k_1,\cdots,k_d}$ is the edge between vertices ${\rm vt}(s-1;j_1,\cdots,j_d)$ and ${\rm vt}(s-2;k_1,\cdots,k_d)$. By (\ref{plpr1}), (\ref{plpr2}), $(s;i_1,\cdots,i_d)$ is a linear combination of $\{(s-2;k_1,\cdots,k_d)\}$, and the coefficient of $(s-2;k_1,\cdots,k_d)$ is as follows:
\[ \sum_{(j_1,\cdots,j_d)} Q^{(m-s)}(e^{i_1,\cdots,i_d}_{j_1,\cdots,j_d}) \cdot Q^{(m-s+1)}(e^{j_1,\cdots,j_d}_{k_1,\cdots,k_d}) \cdot(s-2;k_1,\cdots,k_d),\]
where $(j_1,\cdots,j_d)$ runs over the set $\{(j_1,\cdots,j_d)|\ {\rm vt}(s-1;j_1,\cdots,j_d)$ is connected to the vertices vt$(s;i_1,\cdots,i_d)$ and vt$(s-2;k_1,\cdots,k_d)\}$. The coefficient of $(s-2;k_1,\cdots,k_d)$ coincides with the label of subpath (Definition \ref{Dlabeldef} (iii))
\[{\rm vt}(s;i_1,\cdots,i_d)\rightarrow {\rm vt}(s-1;j_1,\cdots,j_d)\rightarrow{\rm vt}(s-2;k_1,\cdots,k_d).\]
Repeating this argument, we see that $(s;i_1,\cdots,i_d)$ is a linear combination of $\{(0;l_1,\cdots,l_d)\}$ $(1\leq l_1<\cdots<l_d\leq \ovl{1})$. The coefficient of $(0;l_1,\cdots,l_d)$ is equal to the sum of labels of all subpaths from vt$(s;i_1,\cdots,i_d)$ to vt$(0;l_1,\cdots,l_d)$. In the case $m'+d>r$ (resp. $m'+d\leq r$), for $1\leq l_1<\cdots<l_d\leq \ovl{1}$, if $(l_1,\cdots,l_d)=(m'+1,m'+2,\cdots,r,\ovl{d-r+m'},\cdots,\ovl{2},\ovl{1})$ (resp. $=(m'+1,m'+2,\cdots,m'+d)$), then we obtain $(0;l_1,\cdots,l_d)=1$ by (\ref{xmdef2}). If $(l_1,\cdots,l_d)$ is not as above, we obtain $(0;l_1,\cdots,l_d)=0$. Therefore, we see that $(s;i_1,\cdots,i_d)$ is equal to the sum of labels of subpaths from vt$(s;i_1,\cdots,i_d)$ to vt$(0;m,m+1,\cdots,r-1,\ovl{d-r+m},\cdots,\ovl{2},\ovl{1})$ (resp. vt$(0;m,m+1,\cdots,m+d-1))$.
In particular, $\Delta^L(k;{\rm \bf{i}})(\textbf{Y})=(m;1,2,\cdots,d)$ is equal to the sum of labels of paths in $X_d(m,m-1)$, which shows $\Delta^L(k;{\rm \bf{i}})(\textbf{Y})=\sum_{p\in
X_d(m,m-1)} Q(p)$. 
\end{proof}

\subsection{The properties of paths in $X_d(m,m-1)$}

\begin{lem}\label{fixedeqlem}
Let $p\in X_d(m,m-1)$ be a path in the form
\begin{multline}\label{fixpath}
p={\rm vt}(m;a^{(0)}_1,\cdots,a^{(0)}_d)\rightarrow
\cdots\rightarrow{\rm vt}(2;a^{(m-2)}_1,\cdots,a^{(m-2)}_d)  \\ 
\rightarrow{\rm vt}(1;a^{(m-1)}_1,\cdots,a^{(m-1)}_d)\rightarrow{\rm vt}(0;a^{(m)}_1,\cdots,a^{(m)}_d).
\end{multline}

\begin{itemize} 
\item[(i)] For $i$ with $r-m+1\leq i\leq d$, we obtain
\begin{equation}\label{fixedeq}
a^{(m)}_i=a^{(m-1)}_i=\cdots=a^{(r-i+1)}_i=\ovl{d-i+1}.
\end{equation}
\item[(ii)]
Let $a^{(0)}_{i}\rightarrow a^{(1)}_{i}\rightarrow a^{(2)}_{i}\rightarrow \cdots\rightarrow a^{(m)}_{i}$ be the $i$-sequence of the path $p$ $(Definition\ \ref{Dpathdef})$.

\begin{itemize} 
\item[(1)] In the case $i\leq r-m$, one has
\[   \#\{0\leq s\leq m-1|\ 1\leq a^{(s)}_i\leq r-1,\ {\rm and}\ a^{(s)}_i=a^{(s+1)}_i\}=1. \]
\item[(2)] In the case $i> r-m$, one has
\[
\#\{0\leq s\leq r-i+1|\ 1\leq a^{(s)}_i\leq r-1\, {\rm and}\ a^{(s)}_i=a^{(s+1)}_i\}+
\#\{0\leq s\leq r-i+1|\ a^{(s)}_i\in\{r,\ovl{r},\cdots,\ovl{1}\}\}=1. 
\]
\end{itemize}

\end{itemize}

\end{lem}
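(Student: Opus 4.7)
The plan is to prove part (i) by induction on the column index $i$ running from $r-m+1$ up to $d$, using condition (iv) of Definition \ref{Dpathdef} to propagate information from column $i-1$ to column $i$, and then to read off part (ii) as a counting consequence of part (i) together with the transition rules in condition (ii).

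For part (i), the base case $i=r-m+1$ is trivial because $r-i+1=m$, reducing the claim to $a^{(m)}_i=\ovl{d-i+1}$, which is condition (iii) of Definition \ref{Dpathdef}. For the inductive step, fix $i\in[r-m+2,d]$ and assume $a^{(r-i+2)}_{i-1}=\ovl{d-i+2}$. Under the hypothesis of Theorem \ref{thmd1} we have $d\le r-2$, and together with $i\le d$ and $i-1\ge r-m+1$ this yields $d-i+2\in[2,r-3]$; hence $\ovl{d-i+2}$ lies in $\{\ovl{r-1},\ldots,\ovl 2\}\subset\{r,\ovl r,\ovl{r-1},\ldots,\ovl 1\}$ and is comparable to every element of $J_{\rm D}$, since it avoids the unique incomparable pair $\{r,\ovl r\}$ in the order \eqref{D-order}. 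Applying condition (iv) of Definition \ref{Dpathdef} with $\zeta=i-1$ and $s+1=r-i+2$ gives $\ovl{d-i+2}\not\ge a^{(r-i+1)}_i$, hence $a^{(r-i+1)}_i>\ovl{d-i+2}$ strictly in the order \eqref{D-order}. Monotonicity of the $i$-sequence via condition (ii), combined with $a^{(m)}_i=\ovl{d-i+1}$, supplies the upper bound $a^{(r-i+1)}_i\le\ovl{d-i+1}$, so the two bounds pin down $a^{(r-i+1)}_i=\ovl{d-i+1}$, and monotonicity propagates this equality to every $s\in[r-i+1,m]$.

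For part (ii), case (1) is a direct move-count: when $i\le r-m$, the equality $a^{(m)}_i=m+i-1\le r-1$ forces the entire $i$-sequence into $[1,r-1]$, and by condition (ii) each transition is either a stay or a "$+1$" step, so net displacement $m-1$ over $m$ transitions gives exactly one stay. In case (2), let $s_\ast$ be the smallest $s$ with $a^{(s)}_i$ in the barred region $\{r,\ovl r,\ldots,\ovl 1\}$. Condition (ii) forces $a^{(s_\ast-1)}_i=r-1$ (there is no direct jump from $[1,r-2]$ into barred), so $s_\ast\ge r-i$, while part (i) supplies $s_\ast\le r-i+1$. On the prefix $[0,s_\ast-1]$ the sequence performs $r-1-i$ moves and $s_\ast-1-(r-1-i)=s_\ast-r+i$ stays in $[1,r-1]$, while the number of indices $s\in[0,r-i]$ with $a^{(s)}_i$ in the barred region is $r-i-s_\ast+1$. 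Summing, $(s_\ast-r+i)+(r-i-s_\ast+1)=1$, and this, together with part (i) which shows that every transition beyond $s=r-i+1$ is a trivial stay at $\ovl{d-i+1}$, yields the identity claimed in (2).

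The main obstacle is the partial-order subtlety of $J_{\rm D}$: the incomparable pair $\{r,\ovl r\}$ means condition (iv) only forces a strict inequality when $\ovl{d-i+2}$ is comparable to every candidate for $a^{(r-i+1)}_i$. One must carefully exploit $d\le r-2$ to exclude $\ovl{d-i+2}\in\{r,\ovl r\}$ and to confirm that every element $\le\ovl{d-i+1}$ in the order \eqref{D-order} is comparable to $\ovl{d-i+2}$, so that the uniqueness step "$x>\ovl{d-i+2}$ and $x\le\ovl{d-i+1}$ imply $x=\ovl{d-i+1}$" genuinely isolates a single value. The remaining technical issue is bookkeeping: matching the range stated in part (ii) with the effective window of the counting argument relies on part (i) to collapse the constant tail, after which the count reduces to the elementary identity above.
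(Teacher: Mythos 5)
Your proof is correct and takes essentially the same route as the paper: your induction on $i$ in part (i) unwinds to the paper's single chain $\ovl{d-r+m}=a^{(m)}_{r-m+1}<a^{(m-1)}_{r-m+2}<\cdots<a^{(r-d+1)}_{d}\leq\ovl{1}$ obtained by iterating condition (iv) together with the sandwich $a^{(r-i+1)}_i\leq a^{(m)}_i=\ovl{d-i+1}$, and your move-counting in part (ii) is the paper's argument almost verbatim (you are in fact more careful than the paper in justifying that $\ngeq$ upgrades to $<$ despite the incomparable pair $\{r,\ovl{r}\}$, using $d\leq r-2$). One caveat: like the paper's own proof, your computation in (ii)(2) actually establishes the identity with the second set ranged over $0\leq s\leq r-i$ rather than $0\leq s\leq r-i+1$ as printed --- including $s=r-i+1$, where $a^{(r-i+1)}_i=\ovl{d-i+1}$ is barred by part (i), would make the total $2$ --- so your closing remark about "collapsing the constant tail" does not remove this off-by-one, which sits in the statement itself rather than in either proof.
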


\begin{proof}
(i) By Definition \ref{Dpathdef} (iii), we get $a^{(m)}_{r-m+1}=\ovl{d-r+m}$. Using Definition \ref{Dpathdef} (iv) repeatedly, we obtain $\ovl{d-r+m}=a^{(m)}_{r-m+1}<a^{(m-1)}_{r-m+2}<a^{(m-2)}_{r-m+3}<\cdots<a^{(r-d+1)}_{d}\leq \ovl{1}$, which means $a^{(r-i+1)}_{i}=\ovl{d-i+1}$ $(r-m+1\leq i\leq d)$. It follows from Definition \ref{Dpathdef} (ii) and (iii) that $\ovl{d-i+1}=a^{(r-i+1)}_{i}\leq a^{(r-i+2)}_{i}\leq\cdots\leq a^{(m-1)}_{i}\leq a^{(m)}_{i}=\ovl{d-i+1}$, which yields (\ref{fixedeq}).

(ii) (1) In the case $i\leq r-m$, Definition \ref{Dpathdef} (ii) and (iii) show that 
\begin{equation}\label{tab-length1}
 i=a^{(0)}_i\leq a^{(1)}_i\leq\cdots\leq a^{(m)}_i=m+i-1,\ \ a^{(s+1)}_i=a^{(s)}_i\ {\rm or}\ a^{(s)}_i+1. 
\end{equation}
In particular, we get $1\leq a^{(s)}_i\leq r$ for $1\leq s\leq m$.
By (\ref{tab-length1}), we obtain
\[ \#\{0\leq s\leq m-1|\ a^{(s+1)}_i=a^{(s)}_i+1\}=m-1,
\]
which implies $\#\{0\leq s\leq m-1|\ a^{(s)}_i=a^{(s+1)}_i\}=1$. 

(2) In the case $i>r-m$, by Definition \ref{Dpathdef} (ii), we have
\[
 i=a^{(0)}_i\leq a^{(1)}_i\leq\cdots\leq a^{(r-i+1)}_i\leq \ovl{1}. 
\]
We suppose that 
\begin{equation}\label{tab-length2}
i= a^{(0)}_i\leq a^{(1)}_i\leq \cdots\leq a^{(l)}_i\leq r-1,\ {\rm and}\ \ r-1< a^{(l+1)}_i\leq \cdots\leq a^{(r-i)}_i\leq\ovl{1},
\end{equation}
for some $l$ $(1\leq l\leq r-i)$. Definition \ref{Dpathdef} (ii) implies that $a^{(s+1)}_i=a^{(s)}_i$ or $a^{(s)}_i+1$ $(0\leq s\leq l-1)$ and $a^{(l)}_i=r-1$. Therefore, 
\[ i=a^{(0)}_i\leq a^{(1)}_i\leq\cdots\leq a^{(l)}_i=r-1,\qq a^{(s+1)}_i=a^{(s)}_i\ {\rm or}\ a^{(s)}_i+1. \]
So, we have $\#\{0\leq s\leq l-1 |\ a^{(s+1)}_i=a^{(s)}_i \}=l-(r-1-i)=l-r+i+1$ in the same way as (2)(i).

On the other hand, the assumption $r-1< a^{(l+1)}_i\leq\cdots\leq a^{(r-i)}_i\leq \ovl{1}$ in (\ref{tab-length2}) means that $\{l+1\leq s\leq r-i |\ a^{(s)}_i\in \{r,\ovl{r},\cdots,\ovl{1}\}\}=\{l+1,l+2,\cdots, r-i\}$. Hence, $\#\{0\leq s\leq l-1 |\ a^{(s+1)}_i=a^{(s)}_i \}+\#\{l+1\leq s\leq r-i |\ a^{(s)}_i\in \{r,\ovl{r},\cdots,\ovl{1}\}\}=(l-r+i+1)+(r-i-l)=1$. 
\end{proof}

By this lemma, we define $l_i\in\{0,1,\cdots,m\}$ $(1\leq i\leq d)$ for the path $p\in X_d(m,m-1)$ in (\ref{fixpath}) as follows: For $i\leq r-m$, we define $0\leq l_i\leq m$ as the unique number which satisfies $a^{(l_i)}_i=a^{(l_i+1)}_i\leq r-1$ (Lemma \ref{fixedeqlem}(ii) (1)). For $i>r-m$, we define $l_i$ $(0\leq l_i\leq r-i+1)$ as the unique number which satisfies either $a^{(l_i)}_i=a^{(l_i+1)}_i\leq r-1$ or $a^{(l_i)}_i\in\{r,\ovl{r},\cdots,\ovl{1}\}$ (Lemma \ref{fixedeqlem}(ii) (2)). We also set $k_i\in \{j,\ovl{j}|\ 1\leq j\leq r\}$ $(1\leq i\leq d)$ as $k_i:=a^{(l_i)}_i$.

\begin{lem}\label{kpro2d} For a path $p$ as in the paragraph above, we have the following.

\begin{itemize} 
\item[(i)] For $1\leq i\leq d$,
\[ l_{i}=
\begin{cases}
k_{i}-i & {\rm if}\  k_{i}\in\{1,2,\cdots,r-1\}, \\
r-i & {\rm if}\ k_{i}\in\{r,\ovl{r},\ovl{r-1},\cdots,\ovl{1}\}.
\end{cases}
\]
\item[(ii)] For $1\leq i\leq d-1$, if $k_{i}\in\{1,2,\cdots,r-1\}$, then
\[ k_{i}<k_{i+1},\q l_i\leq l_{i+1}. \]
For $1\leq i\leq d-1$, if $k_{i}\in\{r,\ovl{r},\ovl{r-1},\cdots,\ovl{1}\}$, then
\[ k_{i}\ngeq k_{i+1},\q l_{i}=l_{i+1}+1. \]
\item[(iii)] One has $Q(p)=D(m-l_1,k_1)D(m-l_2,k_2)\cdots D(m-l_d,k_d)$.
\end{itemize}

\end{lem}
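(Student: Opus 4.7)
The plan is to derive (i) and (ii) as direct structural consequences of Lemma \ref{fixedeqlem} and Definition \ref{Dpathdef}, and then to prove (iii) by a case-by-case computation of the label contributions of each column $\zeta$, combined with a telescoping identity for the correction factors.

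For (i), the key point is that by Lemma \ref{fixedeqlem} (ii) each $i$-sequence has a unique ``special'' step $l_i$ (either the stability step $a^{(l_i)}_i=a^{(l_i+1)}_i\leq r-1$, or, when $i>r-m$, the first step whose value lies in $\{r,\ovl r,\dots,\ovl 1\}$). Outside this step, Definition \ref{Dpathdef} (ii) together with uniqueness forces the strict increment $a^{(s+1)}_i=a^{(s)}_i+1$, so that $a^{(s)}_i=i+s$ for $s\leq l_i$ (stability case) or $s<l_i$ (bar-transition case). In the stability case this gives $k_i=i+l_i$, hence $l_i=k_i-i$; in the bar-transition case the sequence reaches $r-1$ at step $r-1-i$ and must jump at the very next step to avoid a second special index in $[0,r-i]$, so $l_i=r-i$.

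For (ii), when $k_i\in[1,r-1]$ the strict column inequality $a^{(l_i)}_i<a^{(l_i)}_{i+1}$ from Definition \ref{Dpathdef} (i), combined with the formula $a^{(s)}_{i+1}=(i+1)+s$ before the special step of the $(i+1)$-sequence, rules out $l_{i+1}<l_i$ (which would give $a^{(l_i)}_{i+1}=k_i$), so $l_{i+1}\geq l_i$. In the sub-case $k_{i+1}\in[1,r-1]$ this yields $k_{i+1}=(i+1)+l_{i+1}\geq k_i+1$, while in the sub-case $k_{i+1}\in\{r,\ovl r,\dots,\ovl 1\}$ one has $l_{i+1}=r-i-1\geq k_i-i=l_i$ and $k_{i+1}>r-1\geq k_i$ in the $\mathrm D$-order automatically. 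When $k_i\in\{r,\ovl r,\dots,\ovl 1\}$, so $l_i=r-i$, apply Definition \ref{Dpathdef} (iv) at $(s,\zeta)=(l_i-1,i)$ to get $k_i\ngeq a^{(r-i-1)}_{i+1}$; the assumption $k_{i+1}\in[1,r-1]$ would force $a^{(r-i-1)}_{i+1}=r-1$ via (i), but every element of $\{r,\ovl r,\dots,\ovl 1\}$ is $\geq r-1$ in the $\mathrm D$-order, a contradiction. Hence $k_{i+1}\in\{r,\ovl r,\dots,\ovl 1\}$, giving $l_{i+1}=r-i-1=l_i-1$ and $a^{(r-i-1)}_{i+1}=k_{i+1}$, so the Definition \ref{Dpathdef} (iv) inequality becomes exactly $k_i\ngeq k_{i+1}$.

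For (iii), factor $Q(p)=\prod_{\zeta=1}^d E_\zeta$ with $E_\zeta:=\prod_{s=0}^{m-1}Q^{(s)}(a^{(s)}_\zeta\to a^{(s+1)}_\zeta)$. When $\zeta\leq r-m$ every edge except the one at $l_\zeta$ has trivial label and a one-line check gives $E_\zeta=D(m-l_\zeta,k_\zeta)$. When $\zeta>r-m$ the $\zeta$-sequence also carries the jump into the bar region at step $r-\zeta$ together with the stable-bar labels $Q^{(s)}(\ovl{d-\zeta+1}\to\ovl{d-\zeta+1})$ for $s=r-\zeta+1,\dots,m-1$; running through each possible value of $k_\zeta$ (in $[1,r-2]$, or equal to $r-1$, $r$, $\ovl r$, $\ovl{r-1}$, or $\ovl j$ with $j\leq r-2$) one finds the uniform factorization
\begin{equation*}
E_\zeta=D(m-l_\zeta,k_\zeta)\cdot C_\zeta,\qquad C_\zeta=\frac{\prod_{t=1}^{m-r+\zeta-1}Y_{t,d-\zeta+1}}{\prod_{t=1}^{m-r+\zeta}Y_{t,d-\zeta}}.
\end{equation*}
Reindexing by $\eta=\zeta-(r-m)$ and $j=d-\zeta+1$, a direct rewriting (using $Y_{t,0}=1$) shows that the numerator and denominator of $\prod_{\zeta>r-m}C_\zeta$ range over exactly the same index set $\{(t,j):1\leq j\leq N-1,\;1\leq t\leq N-j\}$ with $N=d-r+m$, so this product telescopes to $1$. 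Multiplying over all $\zeta$ gives $Q(p)=\prod_{\zeta=1}^d D(m-l_\zeta,k_\zeta)$. I expect the main obstacle to be the exhaustive case analysis needed to establish the uniform form of $C_\zeta$: the boundary values $d-\zeta+1\in\{1,r-1\}$ and the exceptional label formulas when $u$ or $v$ equals $\ovl r$ or $\ovl{r-1}$, together with the matching exceptional cases of $D(l,k)$, force several sub-cases to be checked by hand, using the conventions $Y_{s,0}=Y_{s,r+1}=1$ of Remark \ref{importantrem} to make the formulas agree.
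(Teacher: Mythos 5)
Your proposal is correct and follows essentially the same route as the paper's own proof: parts (i) and (ii) are read off from Lemma \ref{fixedeqlem} and Definition \ref{Dpathdef} exactly as there (you merely spell out why $k_i$ in the barred region forces $k_{i+1}$ into the barred region, which the paper compresses into ``it is easy to see $l_i=l_{i+1}+1$''), and for (iii) you compute the same column-wise label products $E_\zeta=D(m-l_\zeta,k_\zeta)\cdot C_\zeta$ that appear in the paper's displayed formula. The only substantive difference is that you make explicit the telescoping cancellation $\prod_{\zeta>r-m}C_\zeta=1$, which the paper asserts implicitly when passing from the column products to $Q(p)=D(m-l_1,k_1)\cdots D(m-l_d,k_d)$; your index-set matching for that cancellation checks out.
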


\begin{proof}
(i) First, we suppose that $k_{i}\in\{1,2,\cdots,r-1\}$. The definition of $l_i$ says
\[ a^{(0)}_i=i,\ a^{(1)}_i=i+1,\ a^{(2)}_i=i+2,\cdots,\ a^{(l_i)}_i=l_i+i. \]
Hence, $k_{i}=a^{(l_i)}_i=l_i+i$, which means that $l_i=k_i-i$. Next, we suppose that $k_{i}\in\{r,\ovl{r},\ovl{r-1},\cdots,\ovl{1}\}$. It follows from Lemma \ref{fixedeqlem} (i) that $l_i=r-i$.

(ii) Since $a^{(s)}_i<a^{(s)}_{i+1}$ for $0\leq s\leq m$, if $k_i\in\{1,2,\cdots,r-1\}$ then $k_{i}<k_{i+1}$. The inequality $l_i\leq l_{i+1}$ is obtained by (i). If $k_{i}\in\{r,\ovl{r},\ovl{r-1},\cdots,\ovl{1}\}$ then $k_{i}\ngeq k_{i+1}$ by Definition \ref{Dpathdef} (iv). It is easily to see $l_{i}=l_{i+1}+1$ by using (i).

(iii) By Definition \ref{Dlabeldef} (i) and Lemma \ref{fixedeqlem} (ii), one gets
\begin{multline*} 
Q^{(0)}(a^{(0)}_i\rightarrow a^{(1)}_i) Q^{(1)}(a^{(1)}_i\rightarrow a^{(2)}_i)
\cdots Q^{(m-1)}(a^{(m-1)}_i\rightarrow a^{(m)}_i) \\
=\begin{cases}
D(m-l_i,\ k_i) & {\rm if}\ i\leq r-m, \\
D(m-l_i,\ k_i)\cdot \frac{1}{Y_{m-r+i,d-i}}\prod^{m}_{s=r-i+1} \frac{Y_{m-s,d-i+1}}{Y_{m-s,d-i}} & {\rm if}\ i>r-m. 
\end{cases}
\end{multline*}
Therefore, we get
\[ Q(p)=\prod^{d}_{i=1}\prod^{m-1}_{s=0}Q^{(s)}(a^{(s)}_i\rightarrow a^{(s+1)}_i)
	=D(m-l_1,k_1)\cdots D(m-l_d,k_d).\]
\end{proof}

\subsection{The proof of Theorem \ref{thmd1}}

 We shall prove that
\begin{equation}\label{thmd1ins}
\Delta^L(k;{\rm \bf{i}})(\textbf{Y})=\sum_{(*)} D(m-l_1,k_1)D(m-l_2,k_2)\cdots D(m-l_d,k_d),
\end{equation}
\[ l_i:=
\begin{cases}
k_i-i & {\rm if}\ 1\leq k_i\leq r-1, \\
r-i & {\rm if}\ k_i\in\{\ovl{r},\cdots,\ovl{1}\}\cup\{r\},
\end{cases}
\]
which is equivalent to Theorem \ref{thmd1}.

[{\sl Proof of Theorem \ref{thmd1}.}]

As we have seen in Lemma \ref{kpro2d}, the monomial $Q(p)$ $(p\in X_d(m,m-1))$ is described as
\[ Q(p)=D(m-l_1,k_1)D(m-l_2,k_2)\cdots D(m-l_d,k_d) \]
with $k_1\ngeq k_2\ngeq \cdots \ngeq k_d$. Definition \ref{Dpathdef} (iii) and the definition of $l_i$ imply that $a^{(0)}_i=i\leq a^{(l_i)}_i=k_i \leq m+i-1=a^{(m)}_i$ $(i\leq r-m)$ and $a^{(0)}_i=i\leq a^{(l_i)}_i=k_i \leq \ovl{d-i+1}=a^{(m)}_i$ $(i>r-m)$. Therefore, these $\{k_i\}_{1\leq i\leq d}$ satisfy the conditions (\ref{D-cond1}), (\ref{D-cond2}) and (\ref{D-cond3}) in Theorem \ref{thmd1}.

Conversely, let $\{K_i\}_{1\leq i\leq d}$ be the sequence in $J_{{\rm D}}$ which satisfies the conditions (\ref{D-cond1}), (\ref{D-cond2}) and (\ref{D-cond3}) in Theorem \ref{thmd1}:
\begin{equation}\label{D-cond1dd}
 K_1\ngeq K_2\ngeq \cdots\ngeq K_d,
\end{equation}
\begin{equation}\label{D-cond2dd}
i\leq K_i\leq m-1+i\qq (1\leq i\leq r-m),
\end{equation}
\begin{equation}\label{D-cond3dd}
i\leq K_i\leq \ovl{d-i+1}\qq (r-m< i\leq d).
\end{equation}
By Proposition \ref{pathlemD}, it suffices to show that there exists a path $p\in X_d(m,m-1)$ such that
\begin{equation}\label{finclaimd}
Q(p)=D(m-L_1,K_1)D(m-L_2,K_2)\cdots D(m-L_d,K_d)
\end{equation}
and
\[ L_{i}:=
\begin{cases}
K_{i}-i & {\rm if}\  K_{i}\in\{1,\cdots,r-1\}, \\
r-i & {\rm if}\ K_{i}\in\{r,\ovl{r},\cdots,\ovl{1}\},
\end{cases}
\]
for $1\leq i\leq d$. Since we supposed $K_i\ngeq K_{i+1}$, we can easily verify if $K_i\in\{1,\cdots,r-1\}$ then $L_i\leq L_{i+1}$ and if $K_i\in\{r,\ovl{r},\cdots,\ovl{1}\}$ then $L_i=L_{i+1}+1$. We define a path $p={\rm vt}(m;a^{(0)}_1,\cdots,a^{(0)}_d)\rightarrow\cdots \rightarrow {\rm vt}(0;a^{(m)}_1,\cdots,a^{(m)}_d)\in X_d(m,m-1)$ as follows: For $i$ $(1\leq i\leq r-m)$, we define the $i$-sequence of $p$ as
\[ a^{(0)}_i=i,\ a^{(1)}_i=i+1, \cdots,\ a^{(L_i)}_i=L_i+i,\ a^{(L_i+1)}_i=L_i+i,\ a^{(L_i+2)}_i=L_i+i+1,\]
\[ a^{(L_i+3)}_i=L_i+i+2,\cdots, a^{(m)}_i=m+i-1. \]
For $i$ $(r-m<i\leq d)$, if $K_{i}\in\{1,2,\cdots,r-1\}$, we define
\[ a^{(0)}_i=i,\ a^{(1)}_i=i+1, \cdots,\ a^{(L_i)}_i=L_i+i,\ a^{(L_i+1)}_i=L_i+i,\ a^{(L_i+2)}_i=L_i+i+1,\]
\[ a^{(L_i+3)}_i=L_i+i+2,\cdots, a^{(r-i)}_i=r-1, a^{(r-i+1)}_i=a^{(r-i+2)}_i=\cdots=a^{(m)}_i=\ovl{d-i+1}, \]
if $K_{i}\in \{r,\ovl{r},\cdots,\ovl{1}\}$, define
\[ a^{(0)}_i=i,\ a^{(1)}_i=i+1, \cdots,\ a^{(r-i-1)}_i=r-1,\ a^{(r-i)}_i=K_{i},\ a^{(r-i+1)}_i=a^{(r-i+2)}_i=\cdots=a^{(m)}_i=\ovl{d-i+1}. \]
These mean that $a^{(L_i)}_i=K_i$. We can verify (\ref{finclaimd}) and the path $p$ satisfies the conditions of $X_d(m,m-1)$ in Definition \ref{Dpathdef}. Hence, by Proposition \ref{pathlemD}, we obtain (\ref{thmd1ins}). \qed

\subsection{The proof of Theorem \ref{thmd2}}

\begin{defn}\label{vspesp}
Let us define the directed graph $(V^{sp},E^{sp})$ as follows:
We define the set $V^{sp}=V^{sp}(m)$ of vertices as 
\[
V^{sp}(m):=\{{\rm vt}(s;k^{(s)}_1,\cdots,k^{(s)}_t)|0\leq t\leq s\leq m,\ 1\leq k^{(s)}_1<k^{(s)}_2<\cdots<k^{(s)}_t\leq r \}. 
\]
And we define the set $E^{sp}=E^{sp}(m)$ of directed edges as 
\begin{multline*} E^{sp}(m):=\{{\rm vt}(s;k^{(s)}_1,\cdots,k^{(s)}_t)\rightarrow
{\rm vt}(s-1;k^{(s+1)}_1,\cdots,k^{(s+1)}_{t'})\\
|\ 0\leq s\leq m-1,\ {\rm vt}(s;k^{(s)}_1,\cdots,k^{(s)}_t),\ {\rm vt}(s-1;k^{(s-1)}_1,\cdots,k^{(s-1)}_{t'})\in V^{sp}(m)\}.
\end{multline*}
\end{defn}

\begin{defn}\label{Dpathdef-sp}
Let $X^r(m,m-1)$  (resp. $X^{r-1}(m,m-1)$) be the set of directed paths $p$ in $(V^{sp},E^{sp})$
which satisfy the following conditions: For $s\in\mathbb{Z}$ $(1\leq s\leq m)$, if ${\rm vt}(s;k^{(s)}_1,\cdots,k^{(s)}_t)\rightarrow{\rm vt}(s-1;k^{(s-1)}_1,\cdots,k^{(s-1)}_{t'})$ is an edge included in $p$, then

\begin{itemize} 
\item[(i)] $t'=t$ or $t'=t+2$,
\item[(ii)] if $t'=t+2$, then $k^{(s-1)}_{t+2}=r$,
\item[(iii)] $k^{(s-1)}_i\leq k^{(s)}_i<k^{(s-1)}_{i+1}$,
\item[(iv)] the starting vertex of $p$ is ${\rm vt}(m;\phi)$ (resp. ${\rm vt}(m;r)$),
\item[(v)] if $m$ is odd (resp. even) then the ending vertex of $p$ is ${\rm vt}(0;1,2,\cdots,m-1)$, if $m$ is even (resp. odd) then one of $p$ is ${\rm vt}(0;1,2,\cdots,m-1,r)$.
\end{itemize}

\end{defn}

Define a Laurent monomial associated with each edge of paths in $X^r(m,m-1)$ and $X^{r-1}(m,m-1)$.
\begin{defn}\label{Dlabeldef-sp}
Let $p$ be a path in $X^r(m,m-1)$ or $X^{r-1}(m,m-1)$.
\item For each $s$ $(1\leq s\leq m)$, we define the {\it label of an edge} ${\rm vt}(s;i_1,i_2,\cdots,i_t)\rightarrow{\rm vt}(s-1;j_1,j_2,\cdots,j_{t'})$ in $p$ as the Laurent monomial which is given as follows and denote it $Q^{(s)}(p)$ or $Q({\rm vt}(s;i_1,i_2,\cdots,i_t)\rightarrow{\rm vt}(s-1;j_1,j_2,\cdots,j_{t'}))$: 
\[ Q^{(s)}(i_p\rightarrow j_p):=
\begin{cases}
\frac{Y_{m-s,i_p}}{Y_{m-s,j_p-1}} & {\rm if}\ i_p\leq r-2, \\
\frac{Y_{m-s,r-1}Y_{m-s,r}}{Y_{m-s,j_p-1}} & {\rm if}\ i_p=r-1, \\
\frac{Y_{m-s,r}}{Y_{m-s,j_p-1}} & {\rm if}\ i_p=r, \\
\end{cases} 
\ \ Q^{(s)}(p):=
\begin{cases}
\frac{1}{Y_{m-s,r}}\prod^t_{p=1}Q^{(s)}(i_p\rightarrow j_p) & {\rm if}\ t'=t, \\
\frac{1}{Y_{m-s,j_{t+1}-1}}\prod^t_{p=1}Q^{(s)}(i_p\rightarrow j_p) & {\rm if}\ t'=t+2. 
\end{cases}
.\]
And we define the {\it label} $Q(p)$ {\it of the path} $p$ as the total product:
\[Q(p):=\prod_{s=0}^{m-1}Q^{(s)}(p). 
\]
\end{defn}

\begin{prop}
In the setting of Theorem \ref{thmd2}, we suppose that $i_k=r-1$ or $r$.
\begin{equation}\label{Dpathlem-spin}
 \Delta^L(k;{\rm \bf{i}})(\textbf{Y})=\sum_{p\in X^{i_k}(m,m-1)} Q(p). 
\end{equation}
\end{prop}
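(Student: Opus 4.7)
The plan is to adapt the path/label framework used for Proposition \ref{pathlemD} to the spin representations, replacing the basis $\{v_{i_1}\wedge\cdots\wedge v_{i_d}\}$ of $\wedge^d V(\Lm_1)$ by the $\ep$-basis of $V^{(\pm,r)}_{\rm sp}\simeq V(\Lm_{d})$ with $d\in\{r-1,r\}$, introduced in Subsection \ref{SectFundD}. Via (\ref{minor-bilin}), we have
\[
\Del^L(k;\bfi)(\textbf{Y})=\lan x^{(1)}_{-[1,r]}\cdots x^{(m)}_{-[1,r]}u_{\Lm_d},\,\ovl{u_{\leq k}}\,u_{\Lm_d}\ran,
\]
computed inside $V^{(\pm,r)}_{\rm sp}$. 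Here $u_{\Lm_d}=(+,\cdots,+)$ when $d=r$ and $u_{\Lm_d}=(+,\cdots,+,-)$ when $d=r-1$. I will encode each basis vector $(\ep_1,\cdots,\ep_r)$ by the subset $K:=\{\,i\mid \ep_i=-\,\}\subset[1,r]$, written as an increasing tuple $(k_1<\cdots<k_t)$; this is exactly the labelling of the vertices of $V^{sp}(m)$ in Definition \ref{vspesp}, and the highest weight $u_{\Lm_d}$ corresponds to the vertex ${\rm vt}(m;\phi)$ or ${\rm vt}(m;r)$ demanded in Definition \ref{Dpathdef-sp}(iv).

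The first step is to establish the spin analogue of Lemma \ref{xmprod2}: compute
\[
x^{(s)}_{-[1,r]}\,(\ep_1,\cdots,\ep_r)=\sum_{(\ep'_1,\cdots,\ep'_r)} c_{\ep,\ep'}\,(\ep'_1,\cdots,\ep'_r),
\]
using $x_{-i}(Y)=y_i(Y)\al_i^\vee(Y^{-1})=(1+Yf_i)Y^{-h_i}$ and formulas (\ref{Dsp-f0})--(\ref{Dsp-f2}), applied in the order $i=1,2,\cdots,r$. For $i<r$ the operator $f_i$ toggles $(+,-)\mapsto(-,+)$ in positions $(i,i+1)$; for $i=r$ it toggles $(+,+)\mapsto(-,-)$ in positions $(r-1,r)$. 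Translating to subsets $K$, this produces exactly the transition rules recorded in Definition \ref{Dpathdef-sp}: the subset size either stays the same (ordinary shifts from $x_{-i}$, $i<r$) or increases by $2$ via the addition of the element $r$ (and only via the $i=r$ toggle), with the monotonicity $k^{(s-1)}_i\leq k^{(s)}_i<k^{(s-1)}_{i+1}$ enforced by antisymmetric cancellations of the kind used in the proof of Lemma \ref{drc}(i). A direct inspection of the toric shift $Y^{-h_i}$ applied to the weight recovers precisely the Laurent monomial edge-labels of Definition \ref{Dlabeldef-sp}; the $\tfrac{1}{Y_{m-s,r}}$ factor in the $t'=t$ case records the $-h_r$ shift that is not otherwise absorbed by a toggle.

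The second step is to iterate. Applying $x^{(1)}_{-[1,r]}\cdots x^{(m)}_{-[1,r]}$ to $u_{\Lm_d}$ and expanding as a sum over sequences of subsets $K^{(m)}\to K^{(m-1)}\to\cdots\to K^{(0)}$ yields a sum indexed by directed paths in $(V^{sp},E^{sp})$ starting at ${\rm vt}(m;\phi)$ or ${\rm vt}(m;r)$, with multiplicative coefficient equal to the product $\prod_s Q^{(s)}(p)=Q(p)$. Pairing against $\ovl{u_{\leq k}}\,u_{\Lm_d}$ selects one specific terminal basis vector: since $u_{\leq k}=(s_1\cdots s_r)^{m-1}s_1\cdots s_d$, iterated use of (\ref{Dsp-f1})--(\ref{Dsp-f2}) together with the identity $\omega(\ovl s_i^{\pm})=\ovl s_i^{\mp}$ shows that $\ovl{u_{\leq k}}u_{\Lm_d}$ is the basis vector indexed by exactly the terminal subsets allowed by Definition \ref{Dpathdef-sp}(v) (i.e.\ $\{1,\cdots,m-1\}$ or $\{1,\cdots,m-1,r\}$, according to the parity of $m$ and the choice $d=r$ or $d=r-1$). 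This pins the terminal vertex and yields (\ref{Dpathlem-spin}).

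The main obstacle is the careful bookkeeping in the first step: unlike the vector representation, the spin $i=r$ toggle changes two coordinates simultaneously, so the combinatorial reduction to the ``$t\to t$ or $t\to t+2$'' rule of Definition \ref{Dpathdef-sp}(i) is not immediate, and one must verify that the natural involution $\tau$ (analogue of the swap used in the proof of Lemma \ref{drc}(i)) cancels all sequences violating the strict inequality $k^{(s-1)}_i<k^{(s-1)}_{i+1}$ or the monotonicity $k^{(s-1)}_i\leq k^{(s)}_i$. Once this cancellation is in place and the edge label is matched monomial-by-monomial with Definition \ref{Dlabeldef-sp}, the remaining steps are routine, paralleling Proposition \ref{pathlemD}.
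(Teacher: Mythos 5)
Your proposal is correct and follows essentially the same route as the paper: encode the spin basis vectors $(\ep_1,\cdots,\ep_r)$ by the subset of positions carrying $-$, compute the action of $x^{(s)}_{-[1,r]}=x_{-1}(Y_{s,1})\cdots x_{-r}(Y_{s,r})$ on these vectors so that the coefficients reproduce the edge labels of Definition \ref{Dlabeldef-sp}, identify $u_{\Lm_d}$ and $\ovl{u_{\leq k}}\,u_{\Lm_d}$ with the prescribed initial and terminal vertices, and sum over paths. One small remark: the cancellation involution $\tau$ you anticipate from Lemma \ref{drc}(i) is not actually needed here, since the spin basis vectors are not wedge products and carry no antisymmetry; the constraints $k^{(s-1)}_i\leq k^{(s)}_i<k^{(s-1)}_{i+1}$ hold automatically because each $f_i$ ($i<r$) only moves a $-$ from position $i+1$ to an unoccupied position $i$, so distinct entries can never collide or cross.
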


\begin{proof}
We shall see the case $i_k=r$. Using the bilinear form (\ref{minor-bilin}), we obtain
\[ \Delta^L(k;{\rm \bf{i}})(\textbf{Y})=
 \Del_{u_{\leq k}\Lm_r,\Lm_r}(x^L_{{\rm \bf{i}}}(\textbf{Y}))
=\lan x^L_{{\rm \bf{i}}}(\textbf{Y})\cdot u_{\Lm_r}\, ,\, \ovl{u_{\leq k}}\cdot u_{\Lm_r}\ran,
\]
where $u_{\Lm_r}$ is the highest vector in the spin representation $V(\Lm_r)$. As we have seen in Subsection \ref{SectFundD}, we can describe basis vectors of $V(\Lm_r)$ as $(\ep_1,\cdots,\ep_r)$ $(\ep_i\in\{+,-\})$. In particular, the highest weight vector $u_{\Lm_r}$ is equal to $(+,+,\cdots,+)$. Using (\ref{Dsp-f1}) and (\ref{Dsp-f2}), we have
\begin{eqnarray*}
\ovl{u_{\leq k}}\cdot u_{\Lm_r}&=&(\ovl{s_1}\ \ovl{s_2}\cdots\ovl{s_r})^{m-1}(+,+,+,\cdots,+) \\
&=&(\ovl{s_1}\ \ovl{s_2}\cdots\ovl{s_r})^{m-2}(-,+,+,\cdots,-) \\
&\vdots&\\
&=&\begin{cases}(\underbrace{-,-,\cdots,-}_{m-1},+,\cdots,+,-) & {\rm if}\ m\ {\rm is\ even}, \\ 
(\underbrace{-,-,\cdots,-}_{m-1},+,\cdots,+,+) & {\rm if}\ m\ {\rm is\ odd}. \\ 
\end{cases}
\end{eqnarray*}
Now, we define the following notation: For $(\ep_1,\cdots,\ep_r)\in{\mathbf B}_{{\rm sp}}^{(r)}$, let $\{k_1,\cdots,k_t\}$ be the set of indices such that $\{1\leq i\leq r|\ \ep_i=- \}=\{k_1,\cdots,k_t\}$. Then we denote $[k_1,\cdots,k_t]:=(\ep_1,\cdots,\ep_r)$. Applying $x_{-1}(Y_{s,1})\cdots x_{-r}(Y_{s,r})$ on $[k_1,\cdots,k_t]$ $(1\leq s\leq m)$, it becomes a linear combination of $\{[j_1,\cdots,j_t]|\ k_{l-1}<j_l\leq k_l\ (1\leq l\leq t)\}\cup\{[j_1,\cdots,j_t,j_{t+1},r]|k_{l-1}<j_l\leq k_l\ (1\leq l\leq t+1)\}$. It follows from (\ref{Bsp-f0}) and (\ref{Bsp-f1}) that
\begin{multline*}
x_{-k_{l-1}}(Y_{s,k_{l-1}})\cdots x_{-(k_l-1)}(Y_{s,k_l-1})[\cdots,k_l,\cdots]=\cdots+\left(\frac{1}{Y_{s,k_l-1}}\right)^{\frac{1}{2}}\left(\frac{Y_{s,k_l-1}}{Y_{s,k_l-2}}\right)^{\frac{1}{2}}\cdots 
\left(\frac{Y_{s,j_l}}{Y_{s,j_l-1}}\right)^{\frac{1}{2}} \left(\frac{Y_{s,j_l-2}}{Y_{s,j_l-1}}\right)^{\frac{1}{2}}
\\
\left(\frac{Y_{s,j_l-3}}{Y_{s,j_l-2}}\right)^{\frac{1}{2}}
 \cdots \left(\frac{Y_{s,k_{l-1}+1}}{Y_{s,k_{l-1}+2}}\right)^{\frac{1}{2}}\frac{Y_{s,k_{l-1}}}{(Y_{s,k_{l-1}+1})^{\frac{1}{2}}}[\cdots,j_l,\cdots]+\cdots =\cdots+\frac{Y_{s,k_{l-1}}}{Y_{s,j_l-1}}[\cdots,j_l,\cdots]+\cdots.
\end{multline*}
Thus, the coefficient of $[j_1,\cdots,j_t]$ in $x_{-1}(Y_{s,1})\cdots x_{-r}(Y_{s,r})[k_1,\cdots,k_t]$ is
\[
\begin{cases} \frac{Y_{s,k_1}}{Y_{s,j_1-1}}\frac{Y_{s,k_2}}{Y_{s,j_2-1}}\cdots \frac{Y_{s,k_t}}{Y_{s,j_t-1}}\cdot \frac{1}{Y_{s,r}} & {\rm if}\ k_{t-1}, k_{t}\neq r-1, \\
\frac{Y_{s,k_1}}{Y_{s,j_1-1}}\frac{Y_{s,k_2}}{Y_{s,j_2-1}}\cdots \frac{Y_{s,k_t}}{Y_{s,j_t-1}} & {\rm if}\ k_{t-1}\ {\rm or}\ k_{t}=r-1. \\
\end{cases}
 \]
and the one of $[j_1,\cdots,j_t,j_{t+1},r]$ is
\[ \frac{Y_{s,k_1}}{Y_{s,j_1-1}}\frac{Y_{s,k_2}}{Y_{s,j_2-1}}\cdots \frac{Y_{s,k_t}}{Y_{s,j_t-1}}\frac{1}{Y_{s,j_{t+1}-1}}, \]
which coincide with $Q({\rm vt}(m-s;k_1,\cdots,k_t)\rightarrow{\rm vt}(m-s+1;j_1,\cdots,j_{t'}))$ in Definition \ref{Dlabeldef-sp}. In our notation, we can write $[\phi]:=(+,+,\cdots,+)$, $[1,2,\cdots,m-1]=(\underbrace{-,-,\cdots,-}_{m-1},+,\cdots,+)$ and $[1,2,\cdots,m-1,r]=(\underbrace{-,-,\cdots,-}_{m-1},+,\cdots,+-)$. Therefore, we get (\ref{Dpathlem-spin}). We can similarly verify the case $i_k=r-1$ by the same way as the case $i_k=r$. 

\end{proof}

[{\sl Proof of Theorem \ref{thmd2}.}]

We take a path in $X^{r}(m,m-1)$ in the form $p={\rm vt}(m;\phi)\rightarrow {\rm vt}(m-1;k^{(m-1)}_1,\cdots,k^{(m-1)}_{t_{m-1}})\rightarrow {\rm vt}(m-2;k^{(m-2)}_1,\cdots,k^{(m-2)}_{t_{m-2}})\rightarrow \cdots \rightarrow {\rm vt}(1;k^{(1)}_1,\cdots,k^{(1)}_{t_{1}})\rightarrow {\rm vt}(0;k^{(0)}_1,\cdots,k^{(0)}_{t_{0}})$, where $t_{m-s-1}=t_{m-s}$ or $t_{m-s}+2$ $(0\leq s\leq m-1,\ t_m:=0)$. There exists a unique even number $s$ $(0\leq s\leq m-1)$ such that $t_m=0$, $t_{m-1}=t_{m-2}=2$, $t_{m-3}=t_{m-4}=4,\ \cdots,\ t_{m-s}=s,\ t_{m-s-1}=s$. We also see that $k^{(m-s-1)}_s=\cdots=k^{(0)}_s=s$ $(1\leq s\leq m-1)$ by Definition \ref{Dpathdef-sp} (ii), (iii) and (v). Then, similar to Lemma \ref{kpro2d}, we can verify
\[ Q(p)=D(m-1,\ovl{k^{(m-1)}_1})D(m-2,\ovl{k^{(m-2)}_2})\cdots D(m-s,\ovl{k^{(m-s)}_s})D(m-s,r+1) \]
and we obtain Theorem \ref{thmd2} in the same way as the proof of Theorem \ref{thmd1}. \qed


\section{The proof of Theorem \ref{thm1}}\label{prsec}

In this section, we shall give the proof of Theorem \ref{thm1}. We have already proven Theorem \ref{thm1} for the simple algebraic group of type ${\rm A}_r$ in \cite{KaN:2015}. So, we shall prove the theorem for other type ${\rm B}_r$, ${\rm C}_r$ and ${\rm D}_r$. First, let us review the theorem for type ${\rm A}_r$. For $k\in\mathbb{Z}$ $(1\leq k\leq r)$, we set $|k|=|\ovl{k}|=k$ and $[1,k]:=\{1,2,\cdots,k\}$.

\subsection{Type ${\rm A}_r$}

For $1\leq l\leq m$ and $1\leq k\leq r$, we set the Laurent monomial $A(l,k):=
\frac{Y_{l,k-1}}{Y_{l,k}}$.

\begin{lem}\cite{KaN:2015}
In the setting of Theorem \ref{thm1}, we have
\begin{equation}\label{A-explicit}
\Delta^L(k;{\rm \bf{i}})=\sum_{(*)} A(m-k_1+1,k_1)A(m-k_2+2,k_2)\cdots A(m-k_d+d,k_d),
\end{equation}
where $(*)$ is the condition for $\{k_i\}_{1\leq i\leq d}$ $:$\q $1\leq k_1<\cdots<k_d\leq m-1+d.$
\end{lem}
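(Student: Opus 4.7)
The plan is to specialize to type ${\rm A}_r$ the path-expansion technique developed in Section 7 for the other classical types; this becomes considerably simpler here because type ${\rm A}$ has no barred indices and no exceptional behavior at the nodes $r-1, r$.

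First I would apply the bilinear form identity (\ref{minor-bilin}) to write
\[
\Delta^L(k;\mathbf{i})(\mathbf{Y}) \;=\; \bigl\langle x^L_\mathbf{i}(\mathbf{Y})\cdot u_{\Lambda_d}\,,\, \overline{u_{\leq k}}\cdot u_{\Lambda_d}\bigr\rangle,
\]
with $u_{\Lambda_d} = v_1 \wedge \cdots \wedge v_d$ (cf.\ (\ref{A-h-l})). Since $i_k = d$ lies in the $(m-1)$-th cycle of $\mathbf{i}_0$, a direct computation using (\ref{smpl}) gives $\overline{u_{\leq k}}\cdot u_{\Lambda_d} = v_m \wedge v_{m+1} \wedge \cdots \wedge v_{m+d-1}$. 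Writing $x^{(s)}_{-[1,r]} := x_{-1}(Y_{s,1})\cdots x_{-r}(Y_{s,r})$, the vector representation formulas (\ref{A-f1}) combined with $\alpha_i^\vee(t) = t^{-h_i}$ yield the elementary single-letter identity
\[
x^{(s)}_{-[1,r]}\, v_j \;=\; \tfrac{Y_{s,j-1}}{Y_{s,j}}\,v_j + v_{j+1} \qquad (1 \leq j \leq r),
\]
under the convention $Y_{s,0} = 1$.

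Next, following the blueprint of Definitions \ref{Dpathdef}--\ref{Dlabeldef} and Proposition \ref{pathlemD}, I would introduce a type-${\rm A}$ path set $X^{A}_d(m, m-1)$ whose vertices are labeled by pairs $(s; a_1, \ldots, a_d)$ with $0 \leq s \leq m$ and $1 \leq a_1 < \cdots < a_d$, and whose edges ${\rm vt}(s; a^{(s)}) \to {\rm vt}(s-1; a^{(s+1)})$ are those satisfying $a^{(s+1)}_i \in \{a^{(s)}_i,\; a^{(s)}_i + 1\}$ for every $i$, subject to the boundary conditions $a^{(0)} = (1, 2, \ldots, d)$ and $a^{(m)} = (m, m+1, \ldots, m+d-1)$. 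Each edge receives the label $\prod_i Q^{(s)}(a^{(s)}_i \to a^{(s+1)}_i)$, where $Q^{(s)}(j \to j) := \tfrac{Y_{m-s,\, j-1}}{Y_{m-s,\, j}}$ and $Q^{(s)}(j \to j+1) := 1$. Expanding $x^{(1)}_{-[1,r]} \cdots x^{(m)}_{-[1,r]}(v_1 \wedge \cdots \wedge v_d)$ by multilinearity and invoking the same involution-style cancellation as in Lemma \ref{drc}(i) (swapping $j_l \leftrightarrow j_{l+1}$ whenever $j_l \leq a^{(s)}_{l+1}$) eliminates all non-strictly-increasing tuples, giving $\Delta^L(k; \mathbf{i})(\mathbf{Y}) = \sum_{p \in X^{A}_d(m, m-1)} Q(p)$.

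Finally, the path set is identified combinatorially with the index set $(*)$. For any $p \in X^{A}_d(m, m-1)$, each coordinate sequence $i = a^{(0)}_i \leq a^{(1)}_i \leq \cdots \leq a^{(m)}_i = m+i-1$ consists of steps of $0$ or $+1$, of which exactly one is stationary; denote its position by $l_i$ and set $k_i := a^{(l_i)}_i = l_i + i$, so $l_i = k_i - i$. The strict inequalities $a^{(s)}_1 < \cdots < a^{(s)}_d$ together with the boundary data force $1 \leq k_1 < k_2 < \cdots < k_d \leq m-1+d$, yielding the required bijection. Since only the stationary step contributes nontrivially to each $i$-factor of $Q(p)$, one obtains
\[
Q(p) \;=\; \prod_{i=1}^d \frac{Y_{m - l_i,\, k_i - 1}}{Y_{m - l_i,\, k_i}} \;=\; \prod_{i=1}^d A\bigl(m - k_i + i,\; k_i\bigr),
\]
which is precisely (\ref{A-explicit}). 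The only substantive obstacle is the cancellation argument producing strictly increasing tuples, but this is a direct transcription of the corresponding step in the type-${\rm D}$ proof and is in fact simpler here due to the absence of barred basis vectors.
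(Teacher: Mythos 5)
Your proposal is correct and is essentially the argument the paper relies on: the lemma itself is only cited from \cite{KaN:2015}, but your path expansion is exactly the type-${\rm A}$ specialization of the machinery of Section 7 (the analogues of Lemma \ref{xmprod2}, Proposition \ref{pathlemD} and Lemma \ref{kpro2d} for $X_d(m,m-1)$), and your bookkeeping of the unique stationary step $l_i=k_i-i$ reproducing $\prod_i A(m-k_i+i,k_i)$ is sound. The only remark worth making is that in type ${\rm A}$ the involution-style cancellation of Lemma \ref{drc}(i) is not even needed, since $j_l\leq i_l+1\leq i_{l+1}\leq j_{l+1}$ forces weakly increasing tuples and any repeated index kills the wedge outright.
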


\begin{thm}\cite{KaN:2015}
\[
\Delta^L(k;{\rm \bf{i}})(\textbf{Y})= \sum_{b\in B^-(\Lm_d)_{u_{\leq k}}}\ \mu(b),
\]
where $B^-(\Lm_d)_{u_{\leq k}}$ is the lower Demazure crystal of type ${\rm A}_r$, and $\mu:B(\Lm_d)\rightarrow \cY$ is the monomial realization of the crystal base $B(\Lm_d)$ such that $\mu(v_{\Lm_d})=\frac{1}{Y_{m,d}}$, where $v_{\Lm_d}$ is the lowest weight vector in $B(\Lm_d)$.
\end{thm}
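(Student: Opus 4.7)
The plan is to establish an explicit weight-preserving bijection from the index set $\{(k_1,\ldots,k_d)\colon (*)\}$ of the sum in (\ref{A-explicit}) onto the Demazure crystal $B^-(\Lm_d)_{u_{\leq k}}$, under which the Laurent monomial $\prod_{i=1}^{d}A(m-k_i+i,k_i)$ matches $\mu$ applied to the corresponding crystal vertex. Because $V(\Lm_d)$ is minuscule in type ${\rm A}_r$, I expect every vertex of the Demazure crystal to correspond to a unique tuple, which automatically produces all coefficients equal to $1$, in contrast to the higher-multiplicity types ${\rm B}_r,{\rm C}_r,{\rm D}_r$ treated in Theorem \ref{thm1}.

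The base case anchors the bijection at $(k_1,\ldots,k_d)=(1,2,\ldots,d)$: the attached monomial telescopes,
\[
\prod_{i=1}^{d}A(m-i+i,i)=\prod_{i=1}^{d}\frac{Y_{m,i-1}}{Y_{m,i}}=\frac{1}{Y_{m,d}}=\mu(v_{\Lm_d}),
\]
identifying this extremal tuple with the lowest weight vector. Next I would invoke Theorem \ref{kashidem} to write $B^-(\Lm_d)_{u_{\leq k}}=\{\tilde{e}_{i_1}^{a_1}\cdots\tilde{e}_{i_k}^{a_k}v_{\Lm_d}\}\setminus\{0\}$, and, using the explicit formula (\ref{asidef}) for $A_{s,i}$, analyze the effect of each $\tilde{e}_i$ on a monomial of the form $\prod_j A(m-k_j+j,k_j)$. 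A direct computation should show that $\tilde{e}_i$ either increments exactly one entry $k_j$ (the one currently equal to $i$ at the row determined by the index $n_{e_i}$) from $i$ to $i+1$, or annihilates the monomial when no admissible entry exists. Iterating this along the reduced word $i_1 i_2\cdots i_k$ identifies the orbit of $v_{\Lm_d}$ with a well-defined subset of tuples $(k_1,\ldots,k_d)$.

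The crux is to show that this orbit coincides exactly with the set cut out by $(*)$, namely $1\leq k_1<k_2<\cdots<k_d\leq m-1+d$. The upper bound emerges from counting: the reduced word of $u_{\leq k}$ comprises $m-2$ full cycles $(1,2,\ldots,r-j+1)$ followed by a partial $(m-1)$th cycle $(1,2,\ldots,d)$, and each cycle raises a given entry $k_j$ by at most one, the ceiling being enforced by the strict inequality $k_j<k_{j+1}$ together with the positions of the indices within each cycle. Symmetrically, every admissible tuple must be shown to be attained by some choice of multiplicities $(a_1,\ldots,a_k)\in\mathbb{Z}_{\geq0}^{k}$.

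The main obstacle will be the surjectivity half of this bookkeeping: demonstrating that the cycle structure of $\mathbf{i}$ forces the Demazure operators to sweep out the entire admissible region rather than a proper subset. I expect this to follow by a forward induction on the position within the reduced word, maintaining a canonical normal form for the exponents $(a_j)$ that provably reaches any prescribed target tuple satisfying $(*)$. Once the bijection is in place, comparing (\ref{A-explicit}) term by term with $\sum_{b\in B^-(\Lm_d)_{u_{\leq k}}}\mu(b)$ yields the theorem, and multiplicity-freeness is a built-in consequence of the bijection, requiring no separate cancellation argument.
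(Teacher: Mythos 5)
Your proposal is correct and follows essentially the same route as the paper: the paper cites \cite{KaN:2015} for this type ${\rm A}_r$ statement, but the method it uses for types ${\rm B}_r$, ${\rm C}_r$, ${\rm D}_r$ (and which it says carries over verbatim when $d\leq r-m+1$) is exactly your plan --- anchor the tuple $(1,2,\dots,d)$ at $\mu(v_{\Lm_d})=1/Y_{m,d}$, use Theorem \ref{kashidem} together with an explicit computation of how each $\tilde{e}_i$ acts on the monomials $\prod_j A(m-k_j+j,k_j)$ (the type-${\rm A}$ analogue of Lemma \ref{Clem1}), and prove the two inclusions between the Kashiwara-operator orbit and the index set $(*)$, with the surjectivity step handled by an explicit construction as in Lemma \ref{Clem2}. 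Your minuscule-weight explanation of why all coefficients equal $1$ is consistent with the paper's observation that $a_b\equiv 1$ in type ${\rm A}_r$.
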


In Theorem \ref{thm1} (\ref{GenDem}), we have used the notation $a_b\in \mathbb{Z}_{>0}$. In the case of type ${\rm A}_r$, we find that $a_b\equiv 1$ for all $b\in B^-(\Lm_d)_{u_{\leq k}}$.

\subsection{Type ${\rm C}_r$}

We treat the type ${\rm C}_r$. In \cite{KaN2:2016}, we calculate the explicit formula of the generalized minors $\Delta^L(k;{\rm \bf{i}})$ on the simple algebraic group of type ${\rm C}_r$ and ${\rm \bf{i}}$ in (\ref{redwords2}). For $l\in[1,m]$ and $k\in J_{{\rm C}}=\{i,\ovl{i}|i=1,2,\cdots,r\}$, we set the Laurent monomials
\begin{equation}\label{ccbar}
C(l,k):=
\begin{cases}
\frac{Y_{l,k-1}}{Y_{l,k}} & {\rm if}\ 1\leq k\leq r, \\
\frac{Y_{l,|k|}}{Y_{l+1,|k|-1}} & {\rm if}\ \ovl{r}\leq k\leq \ovl{1}.
\end{cases}
\end{equation}

\begin{lem}\cite{KaN2:2016}\label{C-exlem}
In the setting of Theorem \ref{thm1}, we have
\begin{equation}\label{C-explicit}
\Delta^L(k;{\rm \bf{i}})=\sum_{(*)} C(l_1,k_1)C(l_2,k_2)\cdots C(l_d,k_d),
\end{equation}
\begin{equation}\label{Lidef}
l_i:=
\begin{cases}
m-k_i+i & {\rm if}\ 1\leq k_i\leq r, \\
m-r-1+i & {\rm if}\ \ovl{r}\leq k_i\leq \ovl{1}.
\end{cases}
\end{equation}
where $(*)$ is the condition for $k_i$ $(1\leq i\leq d)$ $:$ 
\begin{eqnarray}
& &1\leq k_1<k_2<\cdots<k_d\leq\ovl{1}, \label{C-cond1}\\
& &k_i\leq m-1+i\qq ({\rm if}\ 1\leq i\leq r-m+1), \label{C-cond2}\\
& &k_i\leq \ovl{d-i+1}\qq ({\rm if}\ r-m+2\leq i\leq d). \label{C-cond3}
\end{eqnarray}
\end{lem}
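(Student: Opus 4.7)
The plan is to adapt the path-theoretic argument used in Sections 7.3--7.5 to prove Theorem \ref{thmd1} for type ${\rm D}_r$, replacing $J_{{\rm D}}$ with $J_{{\rm C}}$ and using the type-${\rm C}_r$ vector representation from Subsection \ref{SectFundC}. I would first introduce a directed graph $(V^C_d,E^C_d)$ with vertices ${\rm vt}(l;a_1,\dots,a_d)$ for $0\leq l\leq m$ and $a_1<\cdots<a_d$ in the total order (\ref{C-order}), together with a set $X_d^C(m,m-1)$ of directed paths from ${\rm vt}(m;1,2,\dots,d)$ to the terminal vertex corresponding to the factor $u_{\leq k}(v_1\wedge\cdots\wedge v_d)$ appearing in (\ref{minor-bilin}). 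Since the order on $J_{{\rm C}}$ is total, there is no analogue of the $r$-versus-$\ovl r$ bifurcation that complicates type D, so the local edge conditions replace those of Definition \ref{Dpathdef}(ii)--(iv) by a simpler set of rules, and edge labels $Q^{(s)}(e)$ would be defined in parallel with Definition \ref{Dlabeldef}, built from the explicit $x_{-i}(t)$-action on $V(\Lm_1)$ described in Subsection \ref{SectFundC}.

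Next I would establish the type-C analogue of Proposition \ref{pathlemD}:
\[
\Delta^L(k;\textbf{i})(\textbf{Y})=\sum_{p\in X_d^C(m,m-1)}Q(p).
\]
The essential ingredient is an analogue of Lemma \ref{xmprod2}, which for type C takes the cleaner form
\[
x^{(s)}_{-[1,r]}v_j=\tfrac{Y_{s,j-1}}{Y_{s,j}}v_j+v_{j+1},\qquad
x^{(s)}_{-[1,r]}v_{\ovl j}=\sum_{i=1}^{j}\tfrac{Y_{s,j}}{Y_{s,i-1}}v_{\ovl i},
\]
together with a sign-reversing involution on non-path terms (swap the adjacent coordinates $j_l,j_{l+1}$ of $v_{j_1}\wedge\cdots\wedge v_{j_d}$ whenever $|j_l|\leq|i_{l+1}|$) that cancels the contributions lying outside $X_d^C(m,m-1)$; compare Lemma \ref{drc}(i). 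Iterating this expansion $m$ times and pairing against $u_{\leq k}(v_1\wedge\cdots\wedge v_d)$ via (\ref{minor-bilin}) then yields the path sum.

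Finally, for each $p\in X_d^C(m,m-1)$ with $i$-sequence $a_i^{(0)}\to a_i^{(1)}\to\cdots\to a_i^{(m)}$, let $l_i$ be the unique step at which this sequence becomes ``stationary'' (either $a_i^{(l_i)}=a_i^{(l_i+1)}\leq r$, or $a_i^{(l_i)}\in\{\ovl r,\dots,\ovl 1\}$ occurs for the first time) and set $k_i:=a_i^{(l_i)}$. The analogue of Lemma \ref{kpro2d} then shows $Q(p)=C(l_1,k_1)\cdots C(l_d,k_d)$ with $l_i$ given by (\ref{Lidef}), and a direct check confirms that the map $p\mapsto(k_1,\dots,k_d)$ is a bijection onto the set of sequences satisfying (\ref{C-cond1})--(\ref{C-cond3}). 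The main obstacle is the correct handling of the transition at $i=r-m+1$: the terminal shape of the path switches from the forward-indexed regime (condition (\ref{C-cond2})) to the barred-indexed regime (condition (\ref{C-cond3})), and one must verify that the constraints at this boundary precisely match the image of $v_1\wedge\cdots\wedge v_d$ under $u_{\leq k}$ inside $V(\Lm_d)\subset\wedge^d V(\Lm_1)$.
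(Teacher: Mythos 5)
Your overall strategy---transporting the path machinery of Sections 7.2--7.5 from type ${\rm D}_r$ to type ${\rm C}_r$---is the right one. The paper does not prove this lemma itself (it is quoted from \cite{KaN2:2016}), but it proves the exactly parallel Theorem \ref{thmd1} in precisely this way, and your graph $X_d^C(m,m-1)$, the edge labels, the cancellation involution on non-increasing wedge terms, and the final bijection between paths and admissible tuples $(k_1,\dots,k_d)$ correspond to Proposition \ref{pathlemD}, Lemma \ref{drc}(i) and Lemma \ref{kpro2d}. You are also right that type ${\rm C}$ is simpler: the order (\ref{C-order}) is total, so the parity obstructions of Lemma \ref{drc}(ii),(iii) have no analogue, and the coefficients greater than $1$ arise only because distinct admissible tuples can yield the same Laurent monomial.

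The one concrete error is your expansion formula at $j=r$. From (\ref{C-f1}), (\ref{C-f2}) and $x_{-i}(t)=y_i(t)\alpha_i^{\vee}(t^{-1})$ one gets
\[
x^{(s)}_{-[1,r]}v_r=\frac{Y_{s,r-1}}{Y_{s,r}}\,v_r+\sum_{i=1}^{r}\frac{1}{Y_{s,i-1}}\,v_{\ovl i},
\]
which is a genuine third case, not an instance of $x^{(s)}_{-[1,r]}v_j=\frac{Y_{s,j-1}}{Y_{s,j}}v_j+v_{j+1}$ with ``$v_{r+1}$'' read as $v_{\ovl r}$: once a coordinate reaches $r$, a single step can carry it to \emph{any} $v_{\ovl i}$, exactly as in the $v_{r-1}$, $v_r$ cases of Lemma \ref{xmprod2} for type ${\rm D}$. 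This is not cosmetic. The long jump is what forces the edge rule ``if $a^{(s)}_{\zeta}\geq r$ then $a^{(s)}_{\zeta}\leq a^{(s+1)}_{\zeta}\leq\ovl 1$'' in your path set, and it is the source both of condition (\ref{C-cond3}) and of the second case $l_i=m-r-1+i$ of (\ref{Lidef}); with only the ``successor'' rule you would obtain the wrong level for every barred $k_i$ and the wrong admissibility constraints at the boundary $i=r-m+1$. Once this case is added to your key lemma, the Lemma-\ref{kpro2d}-style bookkeeping and the terminal-vertex matching against $u_{\leq k}(v_1\wedge\cdots\wedge v_d)$ go through as you describe.
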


\begin{ex}\label{monorealex3}

Let us consider the case of type ${\rm C}_2$, we set $u=s_1s_2s_1s_2\in W$ and its reduced word ${\rm \bf{i}}=(1,2,1,2)$, which is the same setting in Example \ref{monorealex2}. Then $m=2$, $i_1=i_3=1$, $i_2=i_4=2$. Following Lemma \ref{C-exlem}, we can calculate the generalized minor $\Delta^L(2;{\rm \bf{i}})(\textbf{Y})$:
\begin{eqnarray*}
\Delta^L(2;{\rm \bf{i}})(\textbf{Y})&=&
C(2,1)C(2,2)+C(2,1)C(1,\ovl{2})+C(2,1)C(1,\ovl{1})+C(1,2)C(1,\ovl{2})+C(1,2)C(1,\ovl{1}) \\
&=&\frac{1}{Y_{2,2}}+\frac{Y_{1,2}}{Y^2_{2,1}}+2\frac{Y_{1,1}}{Y_{2,1}}+\frac{Y^2_{1,1}}{Y_{1,2}},
\end{eqnarray*}
which coincides with the one in $(\ref{mono2exp})$. Unlike the case of type ${\rm A}_r$, there exists a term with coefficient $2$. In the cases of type ${\rm B}_r$, ${\rm C}_r$, and ${\rm D}_r$, some terms in $\Delta^L(k;{\rm \bf{i}})(\textbf{Y})$ have coefficients greater than $1$.
\end{ex}

\begin{rem}\label{remC0}

\begin{itemize} 
\item[(i)]We use different notation from the one in \cite{KaN2:2016}. It was defined $\ovl{C}(l,k):=\frac{Y_{l,k-1}}{Y_{l,k}}$ and $C(l,k):=\frac{Y_{l,k+1}}{Y_{l+1,k}}$ in \cite{KaN2:2016}, which corresponds to $C(l,k)$ and $C(l,\ovl{k+1})$ in $(\ref{ccbar})$.
\item[(ii)] In the case $d\leq r-m+1$, the generalized minor $\Delta^L(k;{\rm \bf{i}})$ in $(\ref{C-explicit})$ coincides with the one in $(\ref{A-explicit})$ for type ${\rm A}_r$. Further, in this case, we can verify Theorem \ref{thm1} by the same way as in \cite{KaN:2015}. 
Therefore, in the rest of paper, we shall show for $d>r-m+1$.
\end{itemize}

\end{rem}

Set the subset $\mathbb{B}\subset\cY$ as
\begin{equation}\label{SubsetB}
\mathbb{B}:=\{C(l_1,k_1)C(l_2,k_2)\cdots C(l_d,k_d) | \{k_i\}\ {\rm satisfies}\ (\ref{C-cond1}),\ (\ref{C-cond2})\ {\rm and}\ (\ref{C-cond3}) \}, 
\end{equation}
where above $\{l_i\}$ is as in (\ref{Lidef}).
\begin{rem}\label{remC1}
The definition $(\ref{SubsetB})$ implies that $X=C(l_1,k_1)C(l_2,k_2)\cdots C(l_d,k_d)\in\mathbb{B}$ if and only if

\begin{itemize} 
\item[(i)] $1\leq k_1<\cdots<k_d\leq \ovl{1}$\ $(\Leftrightarrow(\ref{C-cond1}))$,
\item[(ii)] The monomial $X$ has at most $d-r+m-1$ factors in the form $C(s,b)$ $(s\in\mathbb{Z},\ b\in\{\ovl{r},\cdots,\ovl{1}\})$. 
\end{itemize}

The conditions $k_i\leq m-1+i$ $(1\leq i\leq r-m+1)$ and $k_i\leq \ovl{d-i+1}$ $(r-m+2\leq i\leq d)$ in Lemma $\ref{C-exlem}$ follows from $(i)$ and $(ii)$.
\end{rem}

\begin{lem}\label{Clem1}
We take $X:=C(l_1,k_1)C(l_2,k_2)\cdots C(l_d,k_d)\in\mathbb{B}$.

\begin{itemize} 
\item[(i)]For $k\in[1,r-1]$, we have $\tilde{e}_k C(l,k)=A_{l-1,k}\cdot C(l,k)=C(l-1,k+1)$ and 
$\tilde{e}_k C(l,\ovl{k+1})=A_{l,k}\cdot C(l,\ovl{k+1})=C(l,\ovl{k})$ in $\cY$. Furthermore, $\tilde{e}_r C(l,r)=A_{l-1.r}\cdot C(l,r)=C(l-1,\ovl{r})$.
\item[(ii)] If $k\in[1,r-1]$ and $\tilde{e}_k X\neq0$, then there exists $j\in[1,d]$ such that the following $(a)$ or $(b)$ holds:

$(a)$ $k_j=k$, $k_{j+1}>k+1$ and
\begin{equation}\label{ekXc1}
 \tilde{e}_k X=C(l_1,k_1)\cdots C(l_{j-1},k_{j-1})C(l_j-1,k+1)C(l_{j+1},k_{j+1})\cdots C(l_d,k_d).  
\end{equation}
$(b)$ $k_j=\ovl{k+1}$, $k_{j+1}>\ovl{k}$ and
\begin{equation}\label{ekXc2}
\tilde{e}_k X=C(l_1,k_1)\cdots C(l_{j-1},k_{j-1})C(l_j,\ovl{k})C(l_{j+1},k_{j+1})\cdots C(l_d,k_d).  
\end{equation}
\item[(iii)]
If $\tilde{e}_r X\neq0$, then there exists $j$ $(1\leq j\leq d)$ such that $k_j=r$, $k_{j+1}>\ovl{r}$ and
\begin{equation}\label{ekXc3}
\tilde{e}_r X=C(l_1,k_1)\cdots C(l_{j-1},k_{j-1})C(l_j-1,\ovl{r})C(l_{j+1},k_{j+1})\cdots C(l_d,k_d).
\end{equation}
Furthermore, we have $\tilde{e}^2_r X=0$.
\item[(iv)]For $k\in[1,r]$, suppose that $\tilde{e}_k X\neq0$. Then, the monomial $\tilde{e}_k X$ satisfies the condition in Remark \ref{remC1} $(i)$, that is, we can write $\tilde{e}_k X=C(l'_1,k'_1)\cdots C(l'_d,k'_d)$ with some $1\leq k'_1<\cdots<k'_d\leq \ovl{1}$.
\end{itemize}

\end{lem}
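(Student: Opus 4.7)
Part (i) is a direct computation. Substituting the explicit expressions for $A_{s,i}$ in type $\rC_r$ --- namely $A_{s,k} = Y_{s,k}Y_{s+1,k}Y_{s+1,k-1}^{-1}Y_{s,k+1}^{-1}$ for $2 \leq k \leq r-2$, with the appropriate boundary versions at $k=1$ and $k=r-1$, and $A_{s,r} = Y_{s,r}Y_{s+1,r}Y_{s+1,r-1}^{-2}$ (where the exponent $-2$ reflects the long simple root $\alpha_r$) --- and using the convention $Y_{l,0}=1$, each of the three identities in (i) reduces to a short manipulation of Laurent monomials.

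For (ii) and (iii), my plan is to locate the action of $\tilde{e}_k$ by analyzing the exponent sequence $\zeta_{s,k}(X)$ of $Y_{s,k}$ in $X$. Each factor contributes to this sequence in one place only: $C(l_j,k)$ contributes $-1$ at $s=l_j$, $C(l_j,k+1)$ contributes $+1$ at $s=l_j$, $C(l_j,\ovl{k+1})$ contributes $-1$ at $s=l_j+1$, and $C(l_j,\ovl{k})$ contributes $+1$ at $s=l_j$; for $k=r$ only the pair $C(l_j,r)$ (contributing $-1$) and $C(l_j,\ovl{r})$ (contributing $+1$) intervene. Using formula (\ref{Lidef}) together with the strict increase $k_1<\cdots<k_d$, I would show that the partial sums $\sigma_n := \sum_{s\leq n}\zeta_{s,k}(X)$ attain their maximum at a unique $n_{e_k}$, and that this $n_{e_k}$ corresponds precisely to an index $j$ of type (a) or (b) in (ii) (respectively, an index with $k_j = r$ and $k_{j+1} > \ovl{r}$ in (iii)). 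Applying part (i) to the selected factor $C(l_j, k_j)$ then yields the formulas (\ref{ekXc1})--(\ref{ekXc3}). The vanishing $\tilde{e}_r^2 X = 0$ follows because, after replacing $C(l_j,r)$ by $C(l_j-1,\ovl{r})$, all remaining contributions to $\zeta_{\cdot,r}$ are non-negative, forcing $\varepsilon_r(\tilde{e}_r X) = 0$.

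Part (iv) is then a consistency check: in case (a), the new entry $k'_j = k+1$ comes with level $l'_j = l_j - 1 = m-(k+1)+j$, which matches (\ref{Lidef}); cases (b) and (iii) are analogous, and the strict ordering $k'_1 < \cdots < k'_d$ is guaranteed precisely by the hypotheses $k_{j+1} > k+1$, $k_{j+1} > \ovl{k}$, or $k_{j+1} > \ovl{r}$ that singled out the active index $j$. The main technical obstacle I anticipate is the bookkeeping for $\zeta_{s,k}(X)$ when adjacent factors create cancellations --- for instance $k_{j+1} = k+1$ forces $l_j = l_{j+1}$, so $C(l_j,k)C(l_{j+1},k+1)$ telescopes at the $Y_{l_j,k}$ slot, and a similar phenomenon occurs with the pair $C(l_j,\ovl{k+1})C(l_{j+1},\ovl{k})$. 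Showing that $n_{e_k}$ is attained at the rightmost ``uncancelled'' position, and that no spurious maxima of $\sigma_n$ arise elsewhere in the sequence, is where the careful case analysis concentrates.
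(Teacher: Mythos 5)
Your proposal is correct and follows essentially the same route as the paper: part (i) by direct substitution of the explicit $A_{s,i}$, parts (ii)--(iii) by locating the (at most two) factors contributing a negative exponent of $Y_{\cdot,k}$ and identifying $n_{e_k}$ with the position of the relevant factor $C(l_j,k)$ or $C(l_{j'},\ovl{k+1})$ (resp.\ $C(l_j,r)$), the vanishing $\tilde{e}_r^2X=0$ from the absence of negative powers of $Y_{\cdot,r}$ in $\tilde{e}_rX$, and (iv) as a consistency check of the levels against \eqref{Lidef}. The cancellation issue you flag (e.g.\ $k_{j+1}=k+1$ forcing $l_j=l_{j+1}$) is exactly what the paper's hypotheses $k_{j+1}>k+1$, $k_{j+1}>\ovl{k}$ encode, so your bookkeeping matches theirs.
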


\begin{proof}
(i) If $1\leq k\leq r-1$, it follows from the definition of $\tilde{e}_k$ in $\cY$ that $\tilde{e}_k C(l,k)=A_{l-1,k}\cdot C(l,k)=\frac{Y_{l-1,k}Y_{l,k}}{Y_{l-1,k+1}Y_{l,k-1}}\cdot \frac{Y_{l,k-1}}{Y_{l,k}}=C(l-1,k+1)$. We can verify the remained cases in the similar way.

(ii), (iii) For $k\in[1,r-1]$, we suppose that $\tilde{e}_k X\neq0$. If the Laurent monomial $X$ does not include factors in the form $Y^{-1}_{*,k}$, then we get $\varepsilon_k(X)=0$ and $\tilde{e}_k X=0$, which is absurd. Therefore, since $k_1<\cdots<k_d$, the monomial $X$ includes either one or two factors in the form $Y^{-1}_{*,k}$, which means that $X$ has the factors in the form either $C(l_j,k)$ or $C(l_{j'}, \ovl{k+1})$, or both (see (\ref{ccbar})). If $X$ does not have the factor $Y^{-1}_{l_j,k}$ then $X$ has the factor $Y^{-1}_{l_{j'}+1,k}$, $k_{j'}=\ovl{k+1}$, and $k_{j'+1}>\ovl{k}$. Since $\tilde{e}_k X\neq0$, we have $n_{e_k}=l_{j'}$ and hence $\tilde{e}_k X=A_{l_{j'},k}X=C(l_1,k_1)\cdots C(l_{j-1},k_{j-1})C(l_j,\ovl{k})C(l_{j+1},k_{j+1})\cdots C(l_d,k_d)$. Similarly, if $X$ does not have the factor $Y^{-1}_{l_{j'}+1,k}$ then we have $k_j=k$, $k_{j+1}>k+1$, and $\tilde{e}_k X=A_{l_{j}-1,k}X=C(l_1,k_1)\cdots C(l_{j-1},k_{j-1})C(l_j-1,k+1)C(l_{j+1},k_{j+1})\cdots C(l_d,k_d)$. If $X$ includes both factors $Y^{-1}_{l_j,k}$ and $Y^{-1}_{l_{j'}+1,k}$, then $n_{e_k}=l_{j'}$ or $n_{e_k}=l_{j}-1$, which implies that $\tilde{e}_k X$ is in the form (\ref{ekXc1}) or (\ref{ekXc2}). Arguing similarly, we see that if $\tilde{e}_r X\neq0$ then $\tilde{e}_r X$ is in the form (\ref{ekXc3}). Then, since $k_1<k_2<\cdots<k_j=r<k_{j+1}<\cdots<k_d$, the Laurent monomial $\tilde{e}_r X$ has no factors in the form $C(*,r)$, which means that $\tilde{e}_r X$ does not have factors in the form $Y^{-1}_{*,r}$. Hence, it follows from $\varepsilon_r(\tilde{e}_r X)=0$ that $\tilde{e}^2_r X=0$.

(iv) The explicit forms (\ref{ekXc1}), (\ref{ekXc2}) and (\ref{ekXc3}) with each condition for $k_{j+1}$ imply that the monomial $\tilde{e}_k X$ satisfies the condition in Remark \ref{remC1} (i). 
\end{proof}

\begin{lem}\label{Clem2}
For $X=C(l_1,k_1)C(l_2,k_2)\cdots C(l_d,k_d)\in\mathbb{B}$ and $s\in\{0,1,\cdots,r-m+1\}$, we suppose that $1\leq k_1<\cdots<k_{d-s}\leq r$ and $\ovl{r}\leq k_{d-s+1}<\cdots<k_{d-1}<k_d\leq\ovl{1}$, that is, $s:=\#\{i\in [1,d]|k_i\in\{\ovl{r},\cdots,\ovl{1}\}\}$. Then there exist non-negative integers $\{a(i,j)\}_{1\leq i\leq r-d+s,\ 1\leq j\leq r}$ such that
\begin{equation}\label{Clem2-1}
X=\tilde{e}^{a(r-d+s,1)}_1\cdots\tilde{e}^{a(r-d+s,r)}_r\cdots \tilde{e}^{a(2,1)}_1\cdots\tilde{e}^{a(2,r)}_r
 \tilde{e}^{a(1,1)}_1\cdots\tilde{e}^{a(1,d)}_d \frac{1}{Y_{m,d}}. 
\end{equation}
\end{lem}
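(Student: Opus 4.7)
My plan is to prove Lemma~\ref{Clem2} by induction, with base case $X = \frac{1}{Y_{m,d}}$. To identify the base inside $\mathbb{B}$, I first use the telescoping identity
\[
\prod_{i=1}^d C(m, i) \;=\; \prod_{i=1}^d \frac{Y_{m,\,i-1}}{Y_{m,\,i}} \;=\; \frac{1}{Y_{m,d}},
\]
which exhibits $\tfrac{1}{Y_{m,d}}$ as the element of $\mathbb{B}$ corresponding to $(k_i,l_i)=(i,m)$ for all $i$ and $s=0$. For this element I take all $a(i,j)=0$.

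For the inductive step, given $X = C(l_1,k_1)\cdots C(l_d,k_d) \in \mathbb{B}$ distinct from the base, the strategy is to locate a Kashiwara operator $\tilde{f}_k$ such that $X' := \tilde{f}_k X$ still lies in $\mathbb{B}$ and has strictly smaller ``height''—measured, say, by a suitable combination of $\sum_i(m-l_i)$ with the number of barred factors. Reversing Lemma~\ref{Clem1}(i)--(iii), the admissible $\tilde{f}_k$-moves act locally on exactly one factor, carrying $C(l-1,k+1)\mapsto C(l,k)$, $C(l,\overline{k})\mapsto C(l,\overline{k+1})$, or $C(l-1,\overline{r})\mapsto C(l,r)$. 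By the induction hypothesis $X'$ admits the desired nested decomposition with some $\{a'(i,j)\}$, and prepending one $\tilde{e}_k$ to this expression realizes $X=\tilde{e}_k X'$ in the claimed form.

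The central obstacle is to arrange matters so that each chosen $\tilde{f}_k$ corresponds to the \emph{leftmost} (outermost) $\tilde{e}_k$ in the target nested expression, not to some interior operator. I would therefore run a double induction: outer on the number $s$ of barred indices (using Lemma~\ref{Clem1}(iii), which tells us that $\tilde{e}_r$ is what produces a new barred factor), and inner on $\sum_i(m-l_i)$ with $s$ fixed. At every step Lemma~\ref{Clem1}(iv) confirms that $X'$ still lies in $\mathbb{B}$ as a strictly increasing product $C(l'_1,k'_1)\cdots C(l'_d,k'_d)$. The delicate bookkeeping is to respect the within-round order $\tilde{e}_1,\tilde{e}_2,\ldots,\tilde{e}_r$: within the outermost unfinished round, I must verify that the $\tilde{f}_k$ I apply strips off the \emph{rightmost-in-that-round} operator, which in turn forces $k$ to be the largest index appearing in the currently active round. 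Checking this last point—that the natural choice of $\tilde{f}_k$ dictated by the ``first discrepancy'' between $(k_j,l_j)$ and $(j,m)$ lands at the correct position in the nested expression—is where I expect the main technical work to sit.
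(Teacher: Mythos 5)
Your proposal sketches a strategy but stops short of the step that actually carries the proof. The target expression \eqref{Clem2-1} is not merely \emph{some} word in the Kashiwara raising operators applied to $\tfrac{1}{Y_{m,d}}$: it is a word organized into at most $r-d+s$ rounds, each round consisting of $\tilde{e}_1,\dots,\tilde{e}_r$ in a fixed order. Peeling off a single $\tilde{f}_k$ at a time and prepending $\tilde{e}_k$ to the inductive expression gives no control over whether the accumulated prefix can be parsed into that round structure, nor over the total number of rounds. You acknowledge this yourself (``is where I expect the main technical work to sit''), but that is exactly the content of the lemma, so the proposal as written has a genuine gap rather than a complete argument. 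There is a second, related difficulty you do not address: for the induction to run, each intermediate monomial $\tilde{f}_kX$ must again lie in $\mathbb{B}$ (strictly increasing $k$-indices, at most $d-r+m-1$ barred factors), and a naive local move can violate this --- e.g.\ applying $\tilde{f}_r$ to a factor $C(l,\ovl{r})$ produces a factor $C(l+1,r)$ that may collide with an existing factor $C(\ast,r)$. Choosing a descent that always stays inside $\mathbb{B}$ is itself nontrivial.

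The paper's proof circumvents both problems by inducting only on $s$, the number of barred factors, and by making the inductive step a single \emph{macro-move}. The base case $s=0$ is handled by an explicit closed-form choice of the exponents $a(i,j)$ (each factor $C(m,i)$ of $\tfrac{1}{Y_{m,d}}$ is raised to $C(l_i,k_i)$ by the chain $\tilde{e}_{k_i-1}\cdots\tilde{e}_{i+1}\tilde{e}_i$), not just by the trivial monomial. For $s>0$ the key idea --- absent from your proposal --- is the identity \eqref{Clem2-2}, which rewrites the unbarred part $C(l_1,k_1)\cdots C(l_{d-s},k_{d-s})$ as a product of barred factors indexed by the \emph{complementary} set $\{k'_1,\dots,k'_{r-d+s}\}$ times $\tfrac{1}{Y_{m-r+d-s,r}}$. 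Guided by this, the paper applies one carefully designed composite $F_0$ of lowering operators, whose indices read in increasing order and which therefore constitutes exactly one admissible round; it moves a whole cascade of barred indices simultaneously so that $F_0\cdot X$ lands back in $\mathbb{B}$ with $s-1$ barred factors. Then $X=F_0^{\ast}(F_0\cdot X)$ prepends precisely one new round to the inductive expression, which is what keeps the count at $r-d+s$. To repair your argument you would essentially have to rediscover this grouping of single-operator moves into rounds, i.e.\ the identity \eqref{Clem2-2} and the construction of $F_0$.
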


\begin{proof}

We use induction on $s$. In the case $s=0$, we obtain $1\leq k_1<\cdots<k_d\leq r$. Let us put $a(1,i)=a(2,i+1)=\cdots=a(k_i-i,k_{i}-1)=1$ for $1\leq i\leq d$ and $a(i,j)=0$ for other $(i,j)$. Using Lemma \ref{Clem1} (i), we see that $\tilde{e}_{k_i-1}\cdots\tilde{e}_{i+2}\tilde{e}_{i+1}\tilde{e}_i C(m,i)=C(m-k_i+i,k_i)=C(l_i.k_i)$ $(i=1,2,\cdots,d)$. Therefore, by the action of $\tilde{e}^{a(r-d,1)}_1\cdots\tilde{e}^{a(r-d,r)}_r\cdots \tilde{e}^{a(2,1)}_1\cdots\tilde{e}^{a(2,r)}_r
 \tilde{e}^{a(1,1)}_1\cdots\tilde{e}^{a(1,d)}_d$, each factor $C(m,i)$ in $C(m,1)C(m,2)\cdots C(m,d)=\frac{1}{Y_{m,d}}$ becomes $C(m-k_i+i,k_i)=C(l_i.k_i)$, which means
\[
\tilde{e}^{a(r-d,1)}_1\cdots\tilde{e}^{a(r-d,r)}_r\cdots \tilde{e}^{a(2,1)}_1\cdots\tilde{e}^{a(2,r)}_r
 \tilde{e}^{a(1,1)}_1\cdots\tilde{e}^{a(1,d)}_d \frac{1}{Y_{m,d}}
 =C(l_1,k_1)C(l_2,k_2)\cdots C(l_d,k_d)=X. 
\]
Next, we consider the case $s>0$. The explicit form of $C(l,k)$ in (\ref{ccbar}) implies that
\begin{eqnarray*}
& & \prod^{k_1-1}_{i=1}C(m-i,\ovl{i}) \prod^{k_2-1}_{i=k_1+1}C(m-i+1,\ovl{i})
\prod^{k_3-1}_{i=k_2+1}C(m-i+2,\ovl{i})\cdot \\
& &\qq \qq \cdots \prod^{k_{d-s}-1}_{i=k_{d-s-1}+1}C(m-i+d-s-1,\ovl{i}) \prod^{r}_{i=k_{d-s}+1}C(m-i+d-s,\ovl{i}) \\
&=& \frac{Y_{m-k_1+1,k_1-1}}{Y_{m-k_1+1,k_1}}\cdot \frac{Y_{m-k_2+2,k_2-1}}{Y_{m-k_2+2,k_2}}
\cdots \frac{Y_{m-k_{d-s}+d-s,k_{d-s}-1}}{Y_{m-k_{d-s}+d-s,k_{d-s}}}\cdot Y_{m-r+d-s,r}\\
&=& C(l_1,k_1)C(l_2,k_2)\cdots C(l_{d-s},k_{d-s})\cdot Y_{m-r+d-s,r}.
\end{eqnarray*}
Hence, we obtain
\begin{multline}\label{Clem2-2} C(l_1,k_1)C(l_2,k_2)\cdots C(l_{d-s},k_{d-s})= \\
C(m-1,\ovl{k'_1})C(m-2,\ovl{k'_2})\cdots C(m-r+d-s,\ovl{k'_{r-d+s}})\cdot\frac{1}{Y_{m-r+d-s,r}}, 
\end{multline}
where $\{k'_1,\ k'_2,\ \cdots,\ k'_{r-d+s} \}:=\{1,2,\cdots,r\}\setminus\{k_1,k_2,\cdots,k_{d-s} \}$ and $k'_1<k'_2<\cdots<k'_{r-d+s}$. Thus, the Laurent monomial $X$ can be written as follows:
\begin{multline}\label{Clem2-3}
X=C(m-1,\ovl{k'_1})C(m-2,\ovl{k'_2})\cdots C(m-r+d-s,\ovl{k'_{r-d+s}})\cdot\frac{1}{Y_{m-r+d-s,r}}\\
\cdot C(m-r+d-s,k_{d-s+1})\cdots C(m-r+d-2,k_{d-1})C(m-r+d-1,k_d).
\end{multline}

Setting $\kappa=|k_{d-s+1}|$, we suppose that $k'_{j-1}<\kappa\leq k'_{j}$ for some $j$ $(1\leq j\leq r-d+s)$. Then the Laurent monomial $X$ has the factor $Y_{m-r+d-s,\kappa}$ and does not have a factor in the form $Y^{-1}_{a,*}$ $(a\leq m-r+d-s)$. It follows from the definition (\ref{wtph}) of $\varphi_{\kappa}$ that $\varphi_{\kappa}(X)>0$. Furthermore, as we have seen in Lemma \ref{Clem1} (i), if $\kappa<k'_{j}$ (resp. $\kappa=k'_{j}$), then the factor $C(m-r+d-s,\ovl{\kappa})$ in $X$ is changed to $C(m-r+d-s,\ovl{\kappa+1})$ by the action of $\tilde{f}_{\kappa}$ (resp. $\tilde{f}^2_{\kappa}$). Repeating this argument, we see that by the action of
\begin{multline*}
F_0=(\tilde{f}_{r}\tilde{f}^2_{r-1}\cdots\tilde{f}^2_{k'_{r-d+s}}\tilde{f}_{k'_{r-d+s}-1})(\tilde{f}^2_{k'_{r-d+s}-2}\cdots \tilde{f}^2_{k'_{r-d+s-1}}\tilde{f}_{k'_{r-d+s-1}-1})\\
\cdots(\tilde{f}^2_{k'_{j+2}-2}\cdots\tilde{f}^2_{k'_{j+1}}
\tilde{f}_{k'_{j+1}-1})(\tilde{f}^2_{k'_{j+1}-2}\cdots\tilde{f}^2_{k'_{j}+1}\tilde{f}^2_{k'_{j}}\tilde{f}_{k'_{j}-1}
\cdots\tilde{f}_{\kappa+1}\tilde{f}_{\kappa}),
\end{multline*}
each factor $C(m-i,\ovl{k'_{i}})$ is sent to $C(m-i,\ovl{k'_{i+1}-1})$ $(j\leq i\leq r-d+s-1)$. And we find that $C(m-r+d-s,\ovl{k'_{r-d+s}})$ and $C(m-r+d-s,k_{d-s+1})=C(m-r+d-s,\ovl{\kappa})$ become $C(m-r+d-s,\ovl{r})$ and $C(m-r+d-s+1,r)$ respectively. Hence, 
\begin{multline*}
 F_0\cdot X=C(m-1,\ovl{k'_1})\cdots C(m-j+1,\ovl{k'_{j-1}})C(m-j,\ovl{k'_{j+1}-1})\cdots \\
C(m-r+d-s+1,\ovl{k'_{r-d+s}-1})C(m-r+d-s,\ovl{r})\cdot\frac{1}{Y_{m-r+d-s,r}}\\
\cdot C(m-r+d-s+1,r)C(m-r+d-s+1,k_{d-s+2})\cdots C(m-r+d-2,k_{d-1})C(m-r+d-1,k_d).
\end{multline*}
Similar to (\ref{Clem2-2}), setting $\{k''_{1},k''_{2},\cdots,k''_{d-s}\}:=\{1,2,\cdots,r\}\setminus\{k'_1,\cdots,k'_{j-1},k'_{j+1}-1,k'_{j+2}-1,\cdots,k'_{r-d+s}-1,r\}$ with $1\leq k''_1<\cdots<k''_{d-s}<r$, we have
\begin{multline*}
 F_0\cdot X=C(l''_1,k''_1)C(l''_2,k''_2)\cdots C(l''_{d-s},k''_{d-s})C(m-r+d-s+1,r)\\
\cdot C(m-r+d-s+1,k_{d-s+2}) \cdots C(m-r+d-2,k_{d-1})C(m-r+d-1,k_d),
\end{multline*}
where $l''_i:=m-k''_i-i$. By $1\leq k''_1<\cdots<k''_{d-s}<r<\ovl{r}\leq k_{d-s+2}<\cdots<k_{d-1}<k_d\leq\ovl{1}$, the monomial $F_0\cdot X$ has the $s-1$ factors in the form $C(*,b)$ $(b\in\{\ovl{r},\cdots,\ovl{1}\})$ and 
belongs to $\mathbb{B}$ (see Remark \ref{remC1}). Hence using the assumption of induction for $F_0\cdot X$, we get
\[ F_0\cdot X=\tilde{e}^{a(r-d+s-1,1)}_1\cdots\tilde{e}^{a(r-d+s-1,r)}_r\cdots\tilde{e}^{a(1,1)}_1\cdots\tilde{e}^{a(1,d)}_d\frac{1}{Y_{m,d}}, \]
for some non-negative integers $\{a(i,j)\}$. 

In general, for each operator $F:=\tilde{f}_{i_1}\tilde{f}_{i_2}\cdots \tilde{f}_{i_n}$ on $\cY$, we denote the operator $F^{*}$ by 
\begin{equation}\label{Finv}
F^{*}:=\tilde{e}_{i_n}\cdots\tilde{e}_{i_2}\tilde{e}_{i_1},
\end{equation}
 which satisfies $F^{*}\cdot F\cdot X=X$ if $F\cdot X\neq0$. Following (\ref{Finv}), we set $F_0^{*}:=\tilde{e}_{\kappa}\tilde{e}_{\kappa+1}\tilde{e}_{k'_j-1}\tilde{e}^2_{k'_j}\cdots \tilde{e}^2_{r-1}\tilde{e}_r$. Then
\[ X=F_0^{*}\tilde{e}^{a(r-d+s-1,1)}_1\cdots\tilde{e}^{a(r-d+s-1,r)}_r\cdots\tilde{e}^{a(1,1)}_1\cdots\tilde{e}^{a(1,d)}_d\frac{1}{Y_{m,d}},  \]
which is the same form as in (\ref{Clem2-1}). 

Similar to this, we can verify (\ref{Clem2-1}) in the case $k'_{r-d+s}<k_{d-s+1}=\ovl{\kappa}$. We put $\kappa'=k'_{r-d+s}$ and suppose that $|k_{j}|<\kappa'\leq|k_{j-1}|$ for some $d-s+2\leq j\leq d$. Using the operator
\begin{multline*}
F'_0=(\tilde{f}_{r}\tilde{f}^2_{r-1}\cdots\tilde{f}^2_{|k_{d-s+1}|}\tilde{f}_{|k_{d-s+1}|-1})(\tilde{f}^2_{|k_{d-s+1}|-2}\cdots \tilde{f}^2_{|k_{d-s+2}|}\tilde{f}_{|k_{d-s+2}|-1})\\
\cdots(\tilde{f}^2_{|k_{j-3}|-2}\cdots\tilde{f}^2_{|k_{j-2}|}
\tilde{f}_{|k_{j-2}|-1})(\tilde{f}^2_{|k_{j-2}|-2}\cdots\tilde{f}^2_{|k_{j-1}|+1}\tilde{f}^2_{|k_{j-1}|}\tilde{f}_{|k_{j-1}|-1}
\cdots\tilde{f}_{\kappa'+1}\tilde{f}_{\kappa'}),
\end{multline*}
instead of $F_0$, we get (\ref{Clem2-1}) by the same way as above. 

\end{proof}

[{\sl Proof of Theorem \ref{thm1} for type ${\rm C}_r$}\ ]

All we have to show is $\mu(B^-(\Lm_d)_{u_{\leq k}})=\mathbb{B}$, where $\mathbb{B}$ is as in (\ref{SubsetB}). First, let us prove the inclusion $\mu(B^-(\Lm_d)_{u_{\leq k}})\subset\mathbb{B}$. Recall that the crystal $B(\Lm_d)$ has the lowest weight vector $v_{\Lm_d}$, and
\[
(\underbrace{1,2,\cdots,r}_{1{\rm st\ cycle}},\underbrace{1,2,\cdots,r}_{2{\rm nd\ cycle}}\cdots \underbrace{1,2,\cdots,r}_{m-2\ {\rm th\ cycle}},\underbrace{1,2,\cdots,d}_{m-1\ {\rm th\ cycle}}) 
\]
is a reduced word of $u_{\leq k}$ (see (\ref{redwords2}) in Sect.\ref{gmc}). Since we have $\mu(v_{\Lm_d})=C(m,1)C(m,2)\cdots C(m,d)=\frac{1}{Y_{m,d}}\in \mathbb{B}$ and Theorem \ref{kashidem}, we need to see 
\begin{equation}\label{Cproof1}
\tilde{e}^{a(1,1)}_1\cdots\tilde{e}^{a(1,r)}_r \tilde{e}^{a(2,1)}_1\cdots\tilde{e}^{a(2,r)}_r\cdots \tilde{e}^{a(m-1,1)}_1\cdots\tilde{e}^{a(m-1,d)}_d \frac{1}{Y_{m,d}} \in \mathbb{B}\cup\{0\}, 
\end{equation}
for any integers $a(i,j)\in \mathbb{Z}_{\geq0}$. Fixing these integers $a(i,j)$, we define the operators on $\cY$ as $E_{m-1}:=\tilde{e}^{a(m-1,1)}_1\cdots\tilde{e}^{a(m-1,d)}_d$ and $E_{m-s}:=\tilde{e}^{a(m-s,1)}_1\cdots\tilde{e}^{a(m-s,r)}_r$ for $2\leq s\leq m-1$. It follows from Lemma \ref{Clem1} (ii) that 
\[E_{m-1}\frac{1}{Y_{m,d}}=\tilde{e}^{a(m-1,1)}_1\cdots\tilde{e}^{a(m-1,d)}_d \frac{1}{Y_{m,d}}\]
is either a product of $\{C(*,b)|1\leq b\leq d+1,\ *\ {\rm is\ an\ integer}\}$ belonging to $\mathbb{B}$ or $0$. Similarly, $E_{m-2}E_{m-1} \frac{1}{Y_{m,d}}$ is either a product of $\{C(*,b)|1\leq b\leq d+2\}$ or $0$. Repeating this argument, we see that $E_{m-r+d}\cdots E_{m-2}E_{m-1}\frac{1}{Y_{m,d}}$
is either a product of $\{C(*,b)|1\leq b\leq r\}$ belonging to $\mathbb{B}$ or $0$. Hence,
\[ E_{m-r+d}\cdots E_{m-2}E_{m-1}\frac{1}{Y_{m,d}}=C(l_1,k_1)C(l_2,k_2)\cdots C(l_d,k_d), \]
for some $1\leq k_1<k_2<\cdots<k_d\leq r$. Using Lemma \ref{Clem1} (ii), (iii) and (iv), we see that $E_{m-r+d-1}E_{m-r+d}\cdots E_{m-2}E_{m-1}\frac{1}{Y_{m,d}}$ is either $0$ or a product of $\{C(*,b)| b\in J_{{\rm C}} \}$ such that the number of product of $\{C(*,b')|b'\in\{\ovl{r},\ovl{r-1},\cdots,\ovl{1}\}\}$ is at most $1$, and it satisfies the condition in Remark \ref{remC1} (i). Since we have supposed $1\leq d-r+m-1$ in Remark
\ref{remC0} (ii), it also satisfies the condition in Remark \ref{remC1} (ii), which means that it belongs to $\mathbb{B}\cup\{0\}$. Repeating this argument, we can verify that $E_{1}E_{2}\cdots E_{m-2}E_{m-1}\frac{1}{Y_{m,d}}$ is either $0$ or a product of $\{C(*,b)|b\in J_{{\rm C}}\}$ such that the number of product of $\{C(*,b')|b'\in\{\ovl{r},\ovl{r-1},\cdots,\ovl{1}\}\}$ is at most $d-r+m-1$
, and it satisfies the condition in Remark \ref{remC1} (i). Hence,  $E_{1}E_{2}\cdots E_{m-2}E_{m-1}\frac{1}{Y_{m,d}}\in\mathbb{B}\cup\{0\}$, which means (\ref{Cproof1}).

Finally, we shall prove the inverse inclusion $\mu(B^-(\Lm_d)_{u_{\leq k}})\supset\mathbb{B}$. Let $X=C(l_1,k_1)C(l_2,k_2)\cdots C(l_d,k_d)\in\mathbb{B}$ be a monomial. By Theorem \ref{kashidem}, we need to show that there exist non-negative integers $\{a(i,j)\}$ such that
\begin{equation}\label{Cproof2}
X=\tilde{e}^{a(1,1)}_1\cdots\tilde{e}^{a(1,r)}_r \tilde{e}^{a(2,1)}_1\cdots\tilde{e}^{a(2,r)}_r\cdots \tilde{e}^{a(m-1,1)}_1\cdots\tilde{e}^{a(m-1,d)}_d \frac{1}{Y_{m,d}},
\end{equation}
which has been already obtained in Lemma \ref{Clem2}. Thus, we have $\mu(B^-(\Lm_d)_{u_{\leq k}})\supset\mathbb{B}$. \qed

\subsection{Type ${\rm B}_r$}

Next, we treat the case of type ${\rm B}_r$. For $l\in[1,m]$ and $k\in J_{{\rm B}}$, we set the Laurent monomials
\begin{equation}\label{bbbar}
B(l,k):=
\begin{cases}
\frac{Y_{l,k-1}}{Y_{l,k}} & {\rm if}\ 1\leq k\leq r-1, \\
\frac{Y_{l,r-1}}{Y^2_{l,r}} & {\rm if}\ k= r, \\
\frac{Y_{l,r}}{Y_{l+1,r}} & {\rm if}\ k= 0, \\
\frac{Y^2_{l,r}}{Y_{l+1,r-1}} & {\rm if}\ k= \ovl{r}, \\
\frac{Y_{l,|k|}}{Y_{l+1,|k|-1}} & {\rm if}\ \ovl{r-1}\leq k\leq \ovl{1}.
\end{cases}
\end{equation}

In the case $i_k=d<r$, the minors $\Delta^L(k;{\rm \bf{i}})$ are explicitly described as follows.

\begin{lem}\cite{Ka1:2016}
In the setting of Theorem \ref{thm1}, if $i_k=d<r$, we have
\[
\Delta^L(k;{\rm \bf{i}})(\textbf{Y})=\sum_{(*)} a_{k_1,\cdots,k_d} B(l_1,k_1)B(l_2,k_2)\cdots B(l_d,k_d),
\]
\[ l_i:=
\begin{cases}
m-k_i+i & {\rm if}\ 1\leq k_i\leq r, \\
m-r-1+i & {\rm if}\ k_i\in\{\ovl{r},\cdots,\ovl{1}\}\cup\{0\},
\end{cases}
\]
\[ a_{k_1,\cdots,k_d}:=
\begin{cases}
2 & {\rm if}\ k_i=0\ {\rm for\ some}\ i\in[1,d], \\
1 & {\rm if}\ k_i\neq0\ {\rm for\ any}\ i\in[1,d] ,
\end{cases}
 \]
where $(*)$ is the condition for $k_i$ $(1\leq i\leq d)$ $:$ 
\begin{equation}\label{B-cond1}
1\leq k_1\leq k_2\leq \cdots\leq k_d\leq\ovl{1},\q k_i=k_{i+1}\ {\rm if\ and\ only\ if}\ k_i=k_{i+1}=0,
\end{equation}
\begin{equation}\label{B-cond2}
i\leq k_i\leq m-1+i\qq (1\leq i\leq r-m+1),
\end{equation}
\begin{equation}\label{B-cond3}
i\leq k_i\leq \ovl{d-i+1}\qq (r-m+2\leq i\leq d).
\end{equation}
\end{lem}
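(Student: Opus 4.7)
The plan is to follow the two-step strategy already developed in the preceding sections for types ${\rm C}_r$ and ${\rm D}_r$, adapted to the ${\rm B}_r$-specific features, namely the zero weight vector $v_0$ and the factor of $2$ appearing in $f_r v_0 = 2 v_{\ovl r}$ and $e_r v_0 = 2 v_r$. Starting from the bilinear-form identity (\ref{minor-bilin}),
\[
\Delta^L(k;\textbf{i})(\textbf{Y}) = \langle x^L_{\textbf{i}}(\textbf{Y})\cdot u_{\Lm_d},\ \ovl{u_{\leq k}}\cdot u_{\Lm_d}\ran
\]
in $V(\Lm_d)\subset \wedge^d V(\Lm_1)$ (where the embedding (\ref{C-h-l}) is valid for $d<r$ in type ${\rm B}_r$), I would first compute the action of $x^{(s)}_{-[1,r]}:=x_{-1}(Y_{s,1})\cdots x_{-r}(Y_{s,r})$ on each basis vector $v_j$, $j\in J_{{\rm B}}$, using the formulas (\ref{B-f1})--(\ref{B-f2}). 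The new terms relative to type ${\rm D}_r$ are $\tfrac{1}{Y_{s,r}}v_0$ in $x^{(s)}_{-[1,r]}v_r$ and, crucially, $\tfrac{2Y_{s,r}}{Y_{s,i-1}}v_{\ovl i}$ in $x^{(s)}_{-[1,r]}v_0$; these account for the coefficient $a_{k_1,\ldots,k_d}=2$ in the target formula.

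Next, I would set up a path model $X^{{\rm B}}_d(m,m-1)$ on a graph with vertex set indexed by weakly increasing (in the order (\ref{B-order})) $d$-tuples in $J_{{\rm B}}$, arranged in $m+1$ levels, in the spirit of Definition \ref{Dpathdef}. The transitions would permit an $i$-sequence to move $1<2<\cdots<r-1<r<0<\ovl r<\ovl{r-1}<\cdots$ while matching the expansion of $x^{(s)}_{-[1,r]}$ above. Labels on edges would be assigned analogously to Definition \ref{Dlabeldef}, with a dedicated label for edges entering a vertex whose $\zeta$-th entry equals $0$. I would then show, by induction on the level (mirroring Lemma \ref{drc} and Proposition \ref{pathlemD}), that $(m;1,2,\ldots,d)$ equals the sum of labels of paths in $X^{{\rm B}}_d(m,m-1)$, cancelling spurious ``swap'' terms via the involution $\tau$ of Lemma \ref{drc}(i). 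The telescoping computation of Lemma \ref{kpro2d}(iii) then translates each path to a monomial $\prod_i B(l_i,k_i)$, with $l_i$ determined by where the $i$-sequence stabilizes and $k_i$ its stable value; the constraints (\ref{B-cond1})--(\ref{B-cond3}) would fall out exactly from Definition \ref{Dpathdef}-type conditions on starting and ending vertices and on step-by-step transitions, and in particular the clause ``$k_i=k_{i+1}$ if and only if $k_i=k_{i+1}=0$'' reflects that $0$ is the unique value in $J_{{\rm B}}$ allowing two consecutive $i$-sequences to coincide at the stabilizing level (since $v_0\wedge v_0=0$ is prevented by $0$ being a genuine weight-zero vector rather than an index-repetition).

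The main obstacle I anticipate is the careful bookkeeping of the factor $2$. Each time a path enters a $0$-labeled vertex and later leaves it to a barred entry, the expansion of $x^{(s)}_{-[1,r]}v_0$ contributes a factor of $2$, and one must check that (i) after cancellation via $\tau$ no spurious extra $2$'s survive, and (ii) for paths with more than one $0$-entry at a time (allowed by the ``$k_i=k_{i+1}=0$'' clause) the coefficient remains $2$ rather than $2^s$ --- this forces a finer analysis of the intermediate level structure than was needed for type ${\rm D}_r$. A subsidiary obstacle is verifying the vanishing analog of Lemma \ref{drc}(ii),(iii), which rules out combinations of $r$ and $\ovl r$ inconsistent with the total order (\ref{B-order}); here the argument is simpler than for type ${\rm D}_r$ because (\ref{B-order}) is a total order on $J_{{\rm B}}$, but must be rerun with the extra $0$-vertex included. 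With these two points resolved, the explicit formula follows by the same final step as in the proof of Theorem \ref{thmd1}, exhibiting, for every admissible $(k_1,\ldots,k_d)$, a unique path in $X^{{\rm B}}_d(m,m-1)$ whose label is the prescribed monomial with the prescribed multiplicity.
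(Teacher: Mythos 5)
You should first be aware that the paper does not prove this lemma at all: it is imported wholesale from \cite{Ka1:2016}, and the only template for its proof inside this paper is the parallel type-$\mathrm{D}_r$ development of Section 7 (the path model $X_d(m,m-1)$, Lemma \ref{drc}, Proposition \ref{pathlemD}, Lemmas \ref{fixedeqlem} and \ref{kpro2d}). Your outline reproduces that template faithfully, and for the terms with all $k_i\neq 0$ it would go through essentially verbatim, since there the type-$\mathrm{B}_r$ computation differs from type $\mathrm{C}_r$/$\mathrm{D}_r$ only in the shape of the middle of the chain $1<\cdots<r<0<\ovl{r}<\cdots<\ovl{1}$.

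The genuine gap is exactly the point you flag and then defer: the claim that $a_{k_1,\cdots,k_d}=2$ whenever \emph{at least one} $k_i=0$, independently of how many vanish. A direct transcription of the type-$\mathrm{D}$ bookkeeping assigns to each $i$-sequence leaving the value $0$ the weight coming from $x_{-r}(t)v_0=v_0+2t\,v_{\ovl{r}}$, so a path with $t$ indices satisfying $k_i=0$ would naively acquire coefficient $2^{t}$. The collapse to a single factor of $2$ is not finer bookkeeping but a cancellation in the wedge power, of the same nature as (though not identical to) the involution $\tau$ of Lemma \ref{drc}(i); for example
\[
x_{-r}(t)\bigl(v_r\wedge v_0\bigr)=t^{-2}\,v_r\wedge v_0+2t^{-1}\,v_r\wedge v_{\ovl{r}}+(2-1)\,v_0\wedge v_{\ovl{r}},
\]
where the $2$ from $v_0\mapsto 2t\,v_{\ovl{r}}$ is partly eaten by the reordering sign of $v_{\ovl{r}}\wedge v_0$. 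When two adjacent $i$-sequences both take the value $0$ they necessarily do so at \emph{consecutive} levels (at any fixed level a nonzero wedge has at most one entry equal to $0$, since $v_0\wedge v_0=0$), and one must track which of these mixed terms survive level by level to see the coefficient settle at $2$; your proposal names this obstacle but supplies no argument for it, and without it the stated value of $a_{k_1,\cdots,k_d}$ is unproved. Relatedly, your parenthetical justification of the clause ``$k_i=k_{i+1}$ iff $k_i=k_{i+1}=0$'' is garbled: $v_0\wedge v_0$ \emph{is} zero, and the clause is compatible with the wedge model only because the transition levels $l_i\neq l_{i+1}$ at which the two sequences equal $0$ are distinct. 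The rest of the outline (bilinear-form setup, the vanishing analogue of Lemma \ref{drc}(ii)--(iii), the telescoping of edge labels into $\prod_i B(l_i,k_i)$, and the bijection between admissible tuples and paths) is sound and matches the method of the paper and of \cite{Ka1:2016}.
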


Set the subset $\mathbb{B}\subset\cY$ as
\[ \mathbb{B}:=\{B(l_1,k_1)B(l_2,k_2)\cdots B(l_d,k_d) | \{k_i\}\ {\rm satisfy}\ (\ref{B-cond1}),\ (\ref{B-cond2})\ {\rm and}\ (\ref{B-cond3}) \}. \]

Similar to Lemma \ref{Clem1}, we can verify the following lemma.

\begin{lem}\label{Blem1}

\begin{itemize} 
\item[(i)]For $k\in[1,r-1]$, we have $\tilde{e}_k B(l,k)=A_{l-1,k}\cdot B(l,k)=B(l-1,k+1)$ and $\tilde{e}_k B(l,\ovl{k+1})=A_{l,k}\cdot B(l,\ovl{k+1})=B(l,\ovl{k})$ in $\cY$. Furthermore, $\tilde{e}_r B(l,r)=A_{l-1,r}\cdot B(l,r)=B(l-1,0)$ and $\tilde{e}_r B(l,0)=A_{l,r}\cdot B(l,0)=B(l,\ovl{r})$.
\item[(ii)]For $X:=B(l_1,k_1)B(l_2,k_2)\cdots B(l_d,k_d)\in\mathbb{B}$ and $k\in[1,r-1]$, if $\tilde{e}_k X\neq0$, then there exists $j$ $(1\leq j\leq d)$ such that either the following $(a)$ or $(b)$ holds:

$(a)$ $k_j=k$, $k_{j+1}>k+1$ and
\[
 \tilde{e}_k X=B(l_1,k_1)\cdots B(l_{j-1},k_{j-1})B(l_j-1,k+1)B(l_{j+1},k_{j+1})\cdots B(l_d,k_d).  
\]
$(b)$ $k_j=\ovl{k+1}$, $k_{j+1}>\ovl{k}$ and
\[
\tilde{e}_k X=B(l_1,k_1)\cdots B(l_{j-1},k_{j-1})B(l_j,\ovl{k})B(l_{j+1},k_{j+1})\cdots B(l_d,k_d).  
\]
In particular, we have $\tilde{e}_k X\in \mathbb{B}$. 
\item[(iii)]If $\tilde{e}_r X\neq0$, then there exists $j$ $(1\leq j\leq d)$ such that either the following $(a)$ or $(b)$ holds:

$(a)$ $k_j=r$, $k_{j+1}\neq\ovl{r}$ and
\[
\tilde{e}_r X=B(l_1,k_1)\cdots B(l_{j-1},k_{j-1})B(l_j-1,0)B(l_{j+1},k_{j+1})\cdots B(l_d,k_d).
\]
$(b)$ $k_j=0$, $k_{j+1}>\ovl{r}$ and
\[
\tilde{e}_r X=B(l_1,k_1)\cdots B(l_{j-1},k_{j-1})B(l_j,\ovl{r})B(l_{j+1},k_{j+1})\cdots B(l_d,k_d).
\]
\end{itemize}

\end{lem}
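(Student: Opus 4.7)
The plan is to prove (i) by direct computation from the definition of $A_{s,i}$ together with the Cartan data of type $\mathrm{B}_r$, and then to deduce (ii) and (iii) by analysing which factors of $X$ carry negative powers of $Y_{*,k}$ and invoking the rule for $\tilde{e}_k$ in the monomial realization. The argument runs in parallel with the proof of Lemma \ref{Clem1}, but the extra letter $0 \in J_{{\rm B}}$ and the doubled entry $a_{r,r-1} = -2$ introduce new bookkeeping.

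For part (i), I would substitute into \eqref{asidef} the non-zero Cartan entries $a_{k-1,k} = a_{k+1,k} = -1$ for $k \leq r-2$, $a_{r-2,r-1} = -1$, $a_{r,r-1} = -2$, $a_{r-1,r} = -1$, along with $p_{k-1,k} = 1$, $p_{k+1,k} = 0$, and verify the four identities by a routine $Y$-factor cancellation. The two relations $\tilde{e}_r B(l,r) = B(l-1,0)$ and $\tilde{e}_r B(l,0) = B(l,\ovl{r})$ reflect the two-step chain $r \to 0 \to \ovl{r}$ induced by \eqref{B-f2}, which is the new feature compared with type $\mathrm{C}_r$.

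For part (ii), observe that under \eqref{B-cond1}, for each $k \in [1, r-1]$ at most two indices $j, j'$ can satisfy $k_j = k$ or $k_{j'} = \ovl{k+1}$, so $X$ carries at most two factors $Y_{*,k}^{-1}$ (one from $B(l_j,k)$ and one from $B(l_{j'},\ovl{k+1})$). Computing $\varphi_k(X)$ and $n_{e_k}$ via \eqref{wtph} selects the larger row index, and part (i) produces one of the two claimed forms. The inequalities $k_{j+1} > k+1$ in (a) and $k_{j+1} > \ovl{k}$ in (b) are forced because if $k_{j+1}$ coincided with the letter immediately above $k$ (resp.\ $\ovl{k+1}$), an intervening positive factor $Y_{*,k}$ from the neighbouring term would cancel the negative contribution, and $n_{e_k}$ would shift to the other branch. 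The assertion $\tilde{e}_k X \in \mathbb{B}$ is then checked by verifying (\ref{B-cond1})--(\ref{B-cond3}) for the new exponent sequence.

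The main obstacle is part (iii). The factor $B(l,r) = Y_{l,r-1}/Y_{l,r}^2$ contributes the doubled negative power $Y_{l,r}^{-2}$, while $B(l,0) = Y_{l,r}/Y_{l+1,r}$ contributes a mixed $Y_{l,r}^{+1} Y_{l+1,r}^{-1}$; moreover \eqref{B-cond1} permits the repetition $k_j = k_{j+1} = 0$. The count of negative $Y_{*,r}$-powers therefore depends subtly on the four configurations (only $r$'s, only $0$'s, one of each, or a doubled $0$). I would tabulate these four cases, in each case compute $\varphi_r(X)$ and $n_{e_r}$, and apply part (i) once: when $n_{e_r}$ lies on an ``$r$''-row one lands in case (a), and when it lies on a ``$0$''-row one lands in case (b). The inequalities $k_{j+1} \neq \ovl{r}$ and $k_{j+1} > \ovl{r}$ arise from the same cancellation mechanism as in (ii). A technical check left to the end is that even in the doubled-$0$ situation the monomial $\tilde{e}_r X$ still satisfies \eqref{B-cond1}, because the $0$ that is raised to $\ovl{r}$ removes one of the duplicated letters.
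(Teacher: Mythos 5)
Your proposal is correct and follows essentially the same route as the paper: the paper gives no separate argument for Lemma \ref{Blem1} beyond ``similar to Lemma \ref{Clem1}'', and the proof of Lemma \ref{Clem1} is exactly your scheme --- direct computation of $\tilde{e}_k$ on single factors via \eqref{asidef}, then locating the (at most two) factors carrying $Y_{*,k}^{-1}$, computing $n_{e_k}$, and observing the cancellation that forces the inequalities on $k_{j+1}$. Your explicit attention to the type-$\mathrm{B}$ peculiarities (the letter $0$, the entry $a_{r,r-1}=-2$, and repeated $0$'s in \eqref{B-cond1}) is precisely the bookkeeping the paper leaves implicit.
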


\begin{lem}\label{Blem2}
For $X=B(l_1,k_1)B(l_2,k_2)\cdots B(l_d,k_d)\in\mathbb{B}$ and $s\in\mathbb{Z}$ $(0\leq s\leq r-m+1)$, we suppose that $1\leq k_1<\cdots<k_{d-s}\leq r$ and $k_{d-s+1},\ \cdots,\ k_{d-1},\ k_d\in\{0,\ovl{r},\cdots,\ovl{1}\}$, that is, $s:=\#\{i\in[1,d]|k_i\in\{0,\ovl{r},\cdots,\ovl{1}\}\}$. Then there exist non-negative integers $\{a(i,j)\}_{1\leq i\leq r-d+s,\ 1\leq j\leq r}$ such that
\begin{equation}\label{Blem2-1}
X=\tilde{e}^{a(r-d+s,1)}_1\cdots\tilde{e}^{a(r-d+s,r)}_r\cdots \tilde{e}^{a(2,1)}_1\cdots\tilde{e}^{a(2,r)}_r
 \tilde{e}^{a(1,1)}_1\cdots\tilde{e}^{a(1,d)}_d \frac{1}{Y_{m,d}}. 
\end{equation}
\end{lem}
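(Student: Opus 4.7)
My plan is to mirror the proof of Lemma~\ref{Clem2} (the type $\rC_r$ analogue) and induct on $s = \#\{i \in [1,d] : k_i \in \{0, \ovl{r}, \ldots, \ovl{1}\}\}$, adjusting the chain of Kashiwara operators to account for the short simple root $\al_r$ of type $\rB_r$.

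For the base case $s = 0$, we have $1 \le k_1 < k_2 < \cdots < k_d \le r$, and only Lemma~\ref{Blem1}(i) with $k \in [1, r-1]$ (together with $\tilde{e}_r B(l, r) = B(l-1, 0)$ when $k_d = r$, which in turn would push us out of the base case, so in fact we only need $k_d \le r$ with all $k_i < r$ except possibly the last) is needed. Writing $\frac{1}{Y_{m, d}} = \prod_{i=1}^d B(m, i)$ and applying $\tilde{e}_{k_i-1} \cdots \tilde{e}_{i+1} \tilde{e}_i$ to the $i$-th factor $B(m, i)$ produces $B(l_i, k_i)$. Because Kashiwara operators acting on disjoint subsets of variables in a monomial commute, the full chain can be rearranged into the prescribed order, so we may set $a(j, k) = 1$ on the diagonal strip $\{(j, k) : i \le k \le k_i - 1,\ j = k - i + 1\}$ and zero elsewhere.

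For the inductive step $s \geq 1$, I first establish the type-$\rB_r$ analogue of the identity~(\ref{Clem2-2}): with $\{k'_1 < \cdots < k'_{r-d+s}\} = [1, r] \setminus \{k_1, \ldots, k_{d-s}\}$, direct telescoping (using the short-root relation $B(l, r) B(l, \ovl{r}) = Y_{l, r-1} / Y_{l+1, r-1}$) gives
\[
B(l_1, k_1) \cdots B(l_{d-s}, k_{d-s}) = \prod_{i=1}^{r-d+s} B(m-i, \ovl{k'_i}) \cdot (\text{a correction in the variables } Y_{*, r}).
\]
Then, imitating the definition of $F_0$ from the proof of Lemma~\ref{Clem2}, I construct an operator
$F = \tilde{f}_{\kappa} \tilde{f}_{\kappa+1} \cdots$ (where $\kappa = |k_{d-s+1}|$ or $\kappa$ is read off $\ovl{k}_d$ in the symmetric case) which transports the leftmost barred/zero index into the chain $\ovl{k'_{r-d+s}}, \ovl{k'_{r-d+s}-1}, \ldots$, using $\tilde{f}^2_r$ in place of $\tilde{f}_r$ wherever the short-root doubling is required. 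One then checks that $F \cdot X \in \mathbb{B}$ has one fewer barred/zero factor, applies the inductive hypothesis to $F \cdot X$, and recovers the desired expression for $X$ by prepending $F^* := \tilde{e}_{j_N} \cdots \tilde{e}_{j_1}$ (cf.~(\ref{Finv})) to the inductive expression, since $F^* \cdot F \cdot X = X$.

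The main obstacle is the bookkeeping at the short-root node $r$ and the extra spin index $0$. Two situations must be handled separately: when $0 \in \{k_{d-s+1}, \ldots, k_d\}$ the chain $F$ must skip past the $0$-entry (Lemma~\ref{Blem1}(iii)(b)), and when $0 \notin \{k_{d-s+1}, \ldots, k_d\}$ but $\ovl{r} \in \{k_{d-s+1}, \ldots, k_d\}$ the chain must pass through $0$ via $\tilde{f}^2_r$ to produce $\ovl{r}$ from $r$ (Lemma~\ref{Blem1}(iii)(a) read backwards). Checking that in both sub-cases $F \cdot X$ stays inside $\mathbb{B}$ and satisfies the conditions (\ref{B-cond1})--(\ref{B-cond3}) requires the case analysis forced by the ramification of the doubled Kashiwara action at node $r$, but no new ideas beyond those already appearing in the type $\rC_r$ argument.
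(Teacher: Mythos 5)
Your proposal follows the paper's own proof essentially step for step: induction on $s$ with the $s=0$ base case handled as in Lemma \ref{Clem2}, the complementary-index identity (\ref{Blem2-2}) with the $Y_{*,r}$ correction, an operator $F_0$ built with $\tilde{f}_r^2$ at the short-root node to reduce the count of barred/zero factors by one, and recovery of $X$ via $F_0^*$ after invoking the inductive hypothesis. The case split you describe (whether $0$ occurs among the $k_i$ or not) is exactly the one the paper makes, so no further comment is needed.
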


\begin{proof}

Let us prove Lemma \ref{Blem2} by the induction on $s$. In the case $s=0$, we can prove (\ref{Blem2-1}) by the same argument in the proof of Lemma \ref{Clem2}. So, we assume $s>0$. Similar to (\ref{Clem2-2}), we see that
\begin{multline}\label{Blem2-2} B(l_1,k_1)B(l_2,k_2)\cdots B(l_{d-s},k_{d-s})=\\
B(m-1,\ovl{k'_1})B(m-2,\ovl{k'_2})\cdots B(m-r+d-s,\ovl{k'_{r-d+s}})\cdot\frac{1}{Y^2_{m-r+d-s,r}}, 
\end{multline}
where $\{k'_1,\ k'_2,\ \cdots,\ k'_{r-d+s} \}:=\{1,2,\cdots,r\}\setminus\{k_1,\ k_2,\ \cdots,\ k_{d-s} \}$ and $k'_1<k'_2<\cdots<k'_{r-d+s}$. Thus, the Laurent monomial $X$ can be written as follows:
\begin{multline}\label{Blem2-3}
X=B(m-1,\ovl{k'_1})B(m-2,\ovl{k'_2})\cdots B(m-r+d-s,\ovl{k'_{r-d+s}})\cdot\frac{1}{Y^2_{m-r+d-s,r}}\\
\cdot B(m+d-r-s,k_{d-s+1})\cdots B(m+d-r-2,k_{d-1})B(m+d-r-1,k_d).
\end{multline}

If $k_j\neq 0$ for any $j\in[1,d]$, then the monomial (\ref{Blem2-3}) is similar to (\ref{Clem2-3}). 
Therefore, in this case, we can show (\ref{Blem2-1}) in the same way as Lemma \ref{Clem2}. Hence, we may assume that $k_{d-s}=k_{d-s+1}=\cdots=k_{\zeta}=0$ and $\ovl{r}\leq k_{\zeta+1}<k_{\zeta+1}<\cdots<k_d\leq\ovl{1}$ for some integer $\zeta$ ($1\leq\zeta\leq s$). Then we can write as
\begin{multline*}
X=B(m-1,\ovl{k'_1})B(m-2,\ovl{k'_2})\cdots B(m+d-r-s,\ovl{k'_{r-d+s}})\cdot\frac{1}{Y^2_{m-r+d-s,r}} \\
\times B(m+d-r-s,0)B(m+d-r-s+1,0)\cdots B(m+d-r-s+\zeta-1,0) \\
\times B(m+d-r-s+\zeta,k_{\zeta+1})B(m+d-r-s+\zeta+1,k_{\zeta+2})\cdots B(m+d-r-1,k_d).
\end{multline*}
We put $\kappa=k'_{r-d+s}$ and suppose that there exists some integer $j$ $(\zeta+2\leq j\leq d+1)$ such that $|k_{j}|<\kappa\leq|k_{j-1}|$, where $|k_{d+1}|:=0$. By the action of
\begin{multline*}
F_0=(\tilde{f}^2_{r}\tilde{f}^2_{r-1}\cdots\tilde{f}^2_{|k_{\zeta+1}|}\tilde{f}_{|k_{\zeta+1}|-1})(\tilde{f}^2_{|k_{\zeta+1}|-2}\cdots \tilde{f}^2_{|k_{j-3}|}\tilde{f}_{|k_{j-3}|-1}\tilde{f}^2_{|k_{j-3}|-2}\\
\cdots\tilde{f}^2_{|k_{j-2}|}
\tilde{f}_{|k_{j-2}|-1})(\tilde{f}^2_{|k_{j-2}|-2}\cdots\tilde{f}^2_{|k_{j-1}|+1}\tilde{f}^2_{|k_{j-1}|}\tilde{f}_{|k_{j-1}|-1}
\cdots\tilde{f}_{\kappa+1}\tilde{f}_{\kappa}), 
\end{multline*}
each factor $B(m-d-r-s+i-1,\ovl{k_{i}})$ of $X$ becomes $B(m-d-r-s+i-1,\ovl{k_{i-1}-1})$ $(\zeta+2\leq i\leq j-1)$. Similarly, $B(m-d-r-s+\zeta,k_{\zeta+1})$ is changed to $B(m-d-r-s+\zeta,0)$. And we also find that $B(m+d-r-s,\ovl{k'_{r-d+s}})=B(m+d-r-s,\ovl{\kappa})$ and $B(m+d-r-s,0)$ are sent to $B(m+d-r-s,\ovl{r}) $ and $B(m+d-r-s+1,r)$ respectively. Hence, 
it follows from (\ref{Blem2-2}) and the same argument as in the proof of Lemma \ref{Clem2}, the monomial $F_0\cdot X$ has $s-1$ factors in the form $B(*,b)$ $(b\in\{0,\ovl{r},\cdots,\ovl{1}\})$. Thus, using the assumption of induction on $s-1$, we get
\[ F_0\cdot X=\tilde{e}^{a(r-d+s-1,1)}_1\cdots\tilde{e}^{a(r-d+s-1,r)}_r\cdots\tilde{e}^{a(1,1)}_1\cdots\tilde{e}^{a(1,d)}_d\frac{1}{Y_{m,d}}, \]
for some non-negative integers $\{a(i,j)\}$. And we can write
\[ X=F_0^{*}\tilde{e}^{a(r-d+s-1,1)}_1\cdots\tilde{e}^{a(r-d+s-1,r)}_r\cdots\tilde{e}^{a(1,1)}_1\cdots\tilde{e}^{a(1,d)}_d\frac{1}{Y_{m,d}}, \]
where $F_0^{*}$ is as in (\ref{Finv}). Therefore, $X$ is in the desired form in (\ref{Blem2-1}). 

\end{proof}

[{\sl Proof of Theorem \ref{thm1} for type ${\rm B}_r$ in the case $i_k=d<r$}]

All we have to show is $\mu(B^-(\Lm_d)_{u_{\leq k}})=\mathbb{B}$. Using Lemma \ref{Blem1}, we can prove $\mu(B^-(\Lm_d)_{u_{\leq k}})\subset\mathbb{B}$ in the same way as the case of type ${\rm C}_r$. The inverse inclusion $\mu(B^-(\Lm_d)_{u_{\leq k}})\supset\mathbb{B}$ is followed from Theorem \ref{kashidem} and Lemma \ref{Blem2} . \qed

Next, let us prove Theorem \ref{thm1} for type ${\rm B}_r$ in the case $i_k=d=r$.
We use the notation (\ref{bbbar}) and 
\[ B(l,r+1):=\frac{1}{Y_{l,r}}. \]

\begin{lem}\cite{Ka1:2016}
In the setting of Theorem \ref{thm1}, suppose that $i_k=r$. Then
\[
\Delta^L(k;{\rm \bf{i}})=\sum_{(*)} B(m-1,\ovl{k_1})B(m-2,\ovl{k_2})\cdots B(m-s,\ovl{k_s})B(m-s,r+1),
\]
where $(*)$ is the condition for $s\in\mathbb{Z}_{\geq0}$ and $k_i$ $(1\leq i\leq s)$ $:$ 
\begin{equation}\label{B-spin-set}
0\leq s\leq m-1,\qq 1\leq k_1<k_2<\cdots<k_s\leq r.
\end{equation}
\end{lem}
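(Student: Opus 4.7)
The plan is to mirror the path-counting argument developed for type ${\rm D}_r$ in the proof of Theorem \ref{thmd2}, adapting it to the single spin representation $V(\Lm_r)$ of type ${\rm B}_r$. The starting point is the bilinear-form identity (\ref{minor-bilin}):
\[
\Delta^L(k;\bfi)(\textbf{Y}) = \langle x^L_{\bfi}(\textbf{Y}) \cdot u_{\Lm_r},\, \ovl{u_{\leq k}} \cdot u_{\Lm_r} \rangle,
\]
with $u_{\Lm_r} = (+,\ldots,+)$ in the realization of Subsection \ref{SectFundB}. First I would compute $\ovl{u_{\leq k}} \cdot u_{\Lm_r}$: because $u_{\leq k} = (s_1 s_2 \cdots s_r)^{m-1}$ and because in type ${\rm B}_r$ the simple reflection $\ovl{s_r}$ flips the sign of only the last coordinate (rather than the last two, as in type ${\rm D}_r$), iterating (\ref{Bsp-f1}) and (\ref{Bsp-f2}) yields $\ovl{u_{\leq k}} \cdot u_{\Lm_r} = [1,2,\ldots,m-1]$ in the notation where $[k_1,\ldots,k_t]$ denotes the spin vector with sign $-$ at coordinates $k_1 < \cdots < k_t$. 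Crucially, there is no parity constraint: for every $m$ the target vector has exactly $m-1$ minus signs.

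Next, adopting the framework of Definitions \ref{vspesp}, \ref{Dpathdef-sp}, and \ref{Dlabeldef-sp}, I would introduce the graph $(V^{sp},E^{sp})$ and a set of paths $X^r_{\rm B}(m,m-1)$ of type ${\rm B}_r$, dropping the parity condition (v) of Definition \ref{Dpathdef-sp} (the single endpoint $\text{vt}(0;1,2,\ldots,m-1)$ is the only one needed). The edge labels $Q^{(s)}$ are read off from the expansion of $x^{(s)}_{-[1,r]} \cdot [k_1,\ldots,k_t]$ as a linear combination of spin vectors $\{[j_1,\ldots,j_{t'}]\}$: using $x_{-i}(t) = y_i(t)\alpha_i^\vee(t^{-1})$ together with (\ref{Bsp-f1}) and (\ref{Bsp-f2}), the coefficient of a given vector telescopes to an explicit Laurent monomial in the $Y_{s,j}$'s of the form specified by the analogue of Definition \ref{Dlabeldef-sp}.

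Expanding the bilinear form through all $m$ factors $x^{(s)}_{-[1,r]}$ and pairing against $[1,2,\ldots,m-1]$ gives
\[
\Delta^L(k;\bfi)(\textbf{Y}) = \sum_{p \in X^r_{\rm B}(m,m-1)} Q(p).
\]
To turn this into the closed form of the lemma, I would apply the bookkeeping argument of Theorem \ref{thmd2}: for each path $p$ locate the unique levels at which $t_{m-s-1} = t_{m-s} + 2$; since the parity constraint is absent, $s$ may be any integer with $0 \leq s \leq m-1$. Reading off the stabilized coordinates produces a strictly increasing sequence $1 \leq k_1 < \cdots < k_s \leq r$, and the product $Q(p)$ telescopes to $B(m-1,\ovl{k_1})\cdots B(m-s,\ovl{k_s})B(m-s,r+1)$, establishing (\ref{B-spin-set}).

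The main obstacle I anticipate is the bookkeeping of the coefficient of each $[j_1,\ldots,j_{t'}]$ in $x^{(s)}_{-[1,r]} \cdot [k_1,\ldots,k_t]$. In the non-spin type ${\rm B}_r$ setting the middle vector $v_0$ produced the coefficient $2$ appearing in $a_{k_1,\ldots,k_d}$ of the preceding lemma, but here the spin representation contains no such middle coordinate; by an explicit check combining $y_r(t)$ and $\alpha_r^\vee(t^{-1})$ only coefficient $1$ should arise. Verifying that no spurious factor of $2$ slips through when the action passes the node $i = r$, and confirming the exact telescoping form of $Q(p)$ along the flipping level, will be the delicate points of the argument.
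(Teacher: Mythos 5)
The paper states this lemma without proof, citing \cite{Ka1:2016}, and your plan reproduces exactly the method the paper does carry out for the parallel type ${\rm D}_r$ statement (Theorem \ref{thmd2}): expand $\langle x^L_{\bfi}(\textbf{Y})\cdot u_{\Lm_r},\ \ovl{u_{\leq k}}\cdot u_{\Lm_r}\rangle$ in the spin basis, encode the coefficients of the successive expansions as labels of paths, and telescope; your two key adaptations (the target vector is $[1,\ldots,m-1]$ with no parity restriction, and no coefficient $2$ arises because the spin module has no middle vector $v_0$) are both correct. The one slip is in the final bookkeeping: since $f_r$ flips a single sign in type ${\rm B}_r$, each application of $x^{(s)}_{-[1,r]}$ changes the number of minus signs by $0$ or $1$, so the relevant condition is $t_{m-s-1}=t_{m-s}+1$ rather than $+2$ --- this is in fact forced by your own observation that $m-1$ minus signs must be created over $m$ steps, and it does not affect the rest of the argument.
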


Note that in (\ref{B-spin-set}), if $s=0$, the summand of $\Delta^L(k;{\rm \bf{i}})$ is $B(m,r+1)$.
Set the subset $\mathbb{B}_{sp}\subset\cY$ as
\[ \mathbb{B}_{sp}:=\{B(m-1,\ovl{k_1})\cdots B(m-s,\ovl{k_s})B(m-s,r+1) | \{k_i\}\ {\rm and}\ s\ {\rm satisfy}\ (\ref{B-spin-set}) \}. \]

\begin{lem}\label{Bspin-lem1}

\begin{itemize} 
\item[(i)]We have $\tilde{e}_r B(l,r+1)=B(l-1,\ovl{r})B(l-1,r+1)$.
\item[(ii)]For $X:=B(m-1,\ovl{k_1})\cdots B(m-s,\ovl{k_s})B(m-s,r+1)\in\mathbb{B}_{sp}$ and $k\in[1,r-1]$, if $\tilde{e}_k X\neq0$ then there exists $j$ $(1\leq j\leq s)$ such that $\ovl{k_j}=\ovl{k+1}$, $\ovl{k}<\ovl{k_{j+1}}$ and
\[
\tilde{e}_k X=B(m-1,\ovl{k_1})\cdots B(m-j,\ovl{k})\cdots B(m-s,\ovl{k_s})B(m-s,r+1).  
\]
In particular, we have $\tilde{e}_k X\in \mathbb{B}_{sp}$. If $\tilde{e}_r X\neq0$ then $k_s\neq r$ and
\begin{equation}\label{ekXbsp3}
\tilde{e}_r X=B(m-1,\ovl{k_1})\cdots B(m-s,\ovl{k_s})B(m-s-1,\ovl{r})B(m-s-1,r+1).
\end{equation}
\end{itemize}

\end{lem}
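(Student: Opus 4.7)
The plan is to prove both parts by direct calculation in the monomial realization of $B(\Lm_r)$, using the type-$\mathrm{B}_r$ expressions
\[
A_{s,k}=\tfrac{Y_{s,k}Y_{s+1,k}}{Y_{s+1,k-1}Y_{s,k+1}}\ (1\leq k\leq r-2),\q
A_{s,r-1}=\tfrac{Y_{s,r-1}Y_{s+1,r-1}}{Y_{s+1,r-2}Y_{s,r}^{2}},\q
A_{s,r}=\tfrac{Y_{s,r}Y_{s+1,r}}{Y_{s+1,r-1}}
\]
read off from (\ref{asidef}). For (i), the monomial $B(l,r+1)=Y_{l,r}^{-1}$ carries the single nonzero exponent $\zeta_{l,r}=-1$, so the partial sums give $\varphi_r=0$ and $\mathrm{wt}(B(l,r+1))(h_r)=-1$; hence $\varepsilon_r=1$ and $n_{e_r}=l-1$. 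Multiplying by $A_{l-1,r}$ produces $Y_{l-1,r}/Y_{l,r-1}$, which coincides with $B(l-1,\ovl{r})B(l-1,r+1)=(Y_{l-1,r}^{2}/Y_{l,r-1})\cdot Y_{l-1,r}^{-1}$.

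For (ii), I would first record the factors of $X$ in which the variable $Y_{\cdot,k}$ can appear: for $k\in[1,r-2]$ only the $B(m-i,\ovl{k_i})$ with $k_i\in\{k,k+1\}$; for $k=r-1$ only those with $k_i\in\{r-1,r\}$ (the latter contributing $Y_{\cdot,r-1}^{-1}$ through the denominator of $B(\cdot,\ovl{r})$); for $k=r$ the $B(m-i,\ovl{r})$ with exponent $+2$ together with $B(m-s,r+1)$ with exponent $-1$. Since $k_1<\cdots<k_s$, each such value is attained at most once, leaving four configurations for $k\leq r-1$. A short bookkeeping of partial sums shows $\varphi_k(X)=\mathrm{wt}(X)(h_k)$ in every configuration except ``$k_j=k+1$ and no $k_i=k$'', where $\varphi_k=0$ but $\mathrm{wt}(X)(h_k)=-1$; there $\varepsilon_k=1$, $n_{e_k}=m-j$, and the scaling $A_{m-j,k}$ cancels $B(m-j,\ovl{k+1})$ and installs $B(m-j,\ovl{k})$ in its place, leaving the other factors intact. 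For $k=r$ the analogous dichotomy holds: if $k_s=r$ the two contributions to $Y_{m-s,r}$ combine to net exponent $+1$, so $\varphi_r=\mathrm{wt}(X)(h_r)=1$ and $\varepsilon_r=0$; otherwise $\varepsilon_r=1$, $n_{e_r}=m-s-1$, and a direct calculation verifies
\[
A_{m-s-1,r}=\frac{B(m-s-1,\ovl{r})B(m-s-1,r+1)}{B(m-s,r+1)},
\]
confirming the claimed formula (\ref{ekXbsp3}). The strict increase of $(k_i)$ in $\mathbb{B}_{sp}$ is preserved in every case: for $k\leq r-1$ the $j$th entry drops from $k+1$ to $k$, which is consistent because the absence of $k$ among the $k_i$'s forces $k_{j-1}<k$; for $k=r$ a new maximal entry $r$ is appended to a sequence that previously terminated below $r$.

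The main obstacle will be the $k=r-1$ sub-case, where the scaling $A_{s,r-1}$ simultaneously modifies $Y_{\cdot,r-1}$ and $Y_{\cdot,r}^{2}$: one must verify that the $Y_{\cdot,r}^{-2}$ arising from $A_{m-j,r-1}$ cancels the squared numerator of $B(m-j,\ovl{r})$ precisely, so that the new factor becomes $B(m-j,\ovl{r-1})=Y_{m-j,r-1}/Y_{m-j+1,r-2}$ with no residual $Y_{\cdot,r}$ dependence. Once this single cancellation is checked, the remaining cases reduce to the same bookkeeping as in Lemma \ref{Blem1} and can be written out mechanically.
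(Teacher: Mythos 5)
Your proposal is correct and follows essentially the same route as the paper's proof: both arguments track the exponents of $Y_{\cdot,k}$ through the explicit forms (\ref{bbbar}) to locate the unique factor producing $\varepsilon_k(X)>0$, identify $n_{e_k}$, and then multiply by the appropriate $A_{s,k}$ (the paper delegates that last multiplication to Lemma \ref{Blem1}(i), whereas you recompute it directly, including the $Y_{\cdot,r}^{2}$ cancellation in the $k=r-1$ case). The only difference is that you spell out the cancellation bookkeeping for the configurations where both $k_i=k$ and $k_j=k+1$ occur, which the paper compresses into the assertion that $X$ then has no factor $Y_{*,k}^{-1}$.
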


\begin{proof}
(i) We obtain $\tilde{e}_r B(l,r+1)=A_{l-1,r}\frac{1}{Y_{l,r}}=B(l-1,\ovl{r})B(l-1,r+1)$.

(ii) For $k\in[1,r-1]$, we suppose that $\tilde{e}_k(X)\neq0$. If the Laurent monomial $X$ does not have factors in the form $\frac{1}{Y_{*,k}}$, then $\varepsilon_k (X)=0$, which implies $\tilde{e}_k(X)=0$. So, $X$ includes a factor in the form $\frac{1}{Y_{*,k}}$. The explicit form (\ref{bbbar}) means that $X$ has the factor $B(m-j, \ovl{k+1})$ for some $j\in[1,s]$ and does not have the factors in the form $B(*,\ovl{k})$. Then, we have $n_{e_k}=m-j$, and 
\[
\tilde{e}_k X=B(m-1,\ovl{k_1})\cdots B(m-j,\ovl{k})\cdots B(m-s,\ovl{k_s})B(m-s,r+1), 
\]
by Lemma \ref{Blem1} (i). Similarly, we see that if $\tilde{e}_r(X)\neq0$, then $k_s\neq r$ and $\tilde{e}_r X$ is described as in (\ref{ekXbsp3}). 
\end{proof}

[{\sl Proof of Theorem \ref{thm1} for type ${\rm B}_r$ in the case $i_k=r$}]

All we have to show is $\mu(B^-(\Lm_r)_{u_{\leq k}})=\mathbb{B}_{sp}$. Similar to the cases of type ${\rm C}_r$ and type ${\rm B}_r$ ($i_k<r$), we can prove $\mu(B^-(\Lm_r)_{u_{\leq k}})\subset\mathbb{B}_{sp}$ by using Lemma \ref{Bspin-lem1} (ii). 

Let us prove $\mu(B^-(\Lm_r)_{u_{\leq k}})\supset\mathbb{B}_{sp}$. Take an arbitrary monomial $X=B(m-1,\ovl{k_1})\cdots B(m-s,\ovl{k_s})B(m-s,r+1)\in\mathbb{B}_{sp}$. It follows from Lemma \ref{Bspin-lem1} that
\[ \tilde{e}_{k_1+1}\cdots\tilde{e}_{r-1}\tilde{e}_r B(m,r+1)=B(m-1,\ovl{k_1})B(m-1,r+1). \]
Using Lemma \ref{Bspin-lem1} repeatedly, we obtain
\begin{multline*}
(\tilde{e}_{k_s+1}\cdots\tilde{e}_{r-1}\tilde{e}_r) 
\cdots (\tilde{e}_{k_2+1}\cdots\tilde{e}_{r-1}\tilde{e}_r)(\tilde{e}_{k_1+1}\cdots\tilde{e}_{r-1}\tilde{e}_r) B(m,r+1)\\
=B(m-1,\ovl{k_1})\cdots B(m-s,\ovl{k_s})B(m-s,r+1)=X. 
\end{multline*}
Recall that $B(m,r+1)=\frac{1}{Y_{m,r}}=\mu(v_{\Lm_r})$, where $v_{\Lm_r}$ is the lowest weight vector of the crystal base $B(\Lm_r)$. Therefore, by Theorem \ref{kashidem}, we get $X\in\mu(B^-(\Lm_r)_{u_{\leq k}})$. Hence, we obtain $\mu(B^-(\Lm_r)_{u_{\leq k}})\supset\mathbb{B}_{sp}$. \qed

\subsection{Type ${\rm D}_r$}

Finally, we treat the case type ${\rm D}_r$. We shall use the notation (\ref{ddbar}). First, let us prove the theorem in the case $i_k=d<r-1$. Set the subset $\mathbb{B}\subset\cY$ as
\[ \mathbb{B}:=\{D(l_1,k_1)D(l_2,k_2)\cdots D(l_d,k_d) | \{k_i\}\ {\rm satisfy}\ (\ref{D-cond1}),\ (\ref{D-cond2})\ {\rm and}\ (\ref{D-cond3}) \}. \]
Similar to Lemma \ref{Clem1}, we can verify the following lemma.

\begin{lem}\label{Dlem1}

\begin{itemize} 
\item[(i)]For $k\in[1,r-1]$, we have $\tilde{e}_k D(l,k)=D(l-1,k+1)$ and $\tilde{e}_k D(l,\ovl{k+1})=D(l,\ovl{k})$ in $\cY$. Furthermore, $\tilde{e}_r D(l,r-1)=D(l-1,\ovl{r})$ and $\tilde{e}_{r} D(l,r)=D(l,\ovl{r-1})$.
\item[(ii)]For $X:=D(l_1,k_1)D(l_2,k_2)\cdots D(l_d,k_d)\in\mathbb{B}$ and $k\in[1,r-1]$, if $\tilde{e}_k X\neq0$, then there exists $j$ $(1\leq j\leq d)$ such that either the following $(a)$ or $(b)$ holds:

$(a)$ $k_j=k$, $k_{j+1}>k+1$ and
\[
 \tilde{e}_k X=D(l_1,k_1)\cdots D(l_{j-1},k_{j-1})D(l_j-1,k+1)D(l_{j+1},k_{j+1})\cdots D(l_d,k_d).
\]
$(b)$ $k_j=\ovl{k+1}$, $k_{j+1}>\ovl{k}$ and
\[
\tilde{e}_k X=D(l_1,k_1)\cdots D(l_{j-1},k_{j-1})D(l_j,\ovl{k})D(l_{j+1},k_{j+1})\cdots D(l_d,k_d).  
\]
In particular, we have $\tilde{e}_k X\in \mathbb{B}$. 
\item[$(iii)$]
If $\tilde{e}_r X\neq0$, then there exists $j$ $(1\leq j\leq d)$ such that either following $(a)$ or $(b)$ holds:

$(a)$ $k_j=r-1$, $k_{j+1}>\ovl{r-1}$ and
\[
\tilde{e}_r X=D(l_1,k_1)\cdots D(l_{j-1},k_{j-1})D(l_j-1,\ovl{r})D(l_{j+1},k_{j+1})\cdots D(l_d,k_d).
\]
$(b)$ $k_j=r$, $k_{j+1}>\ovl{r-1}$ and
\[
\tilde{e}_r X=D(l_1,k_1)\cdots D(l_{j-1},k_{j-1})D(l_j,\ovl{r-1})D(l_{j+1},k_{j+1})\cdots D(l_d,k_d).
\]
\end{itemize}

\end{lem}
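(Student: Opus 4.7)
The plan is to prove Lemma~\ref{Dlem1} by the same method used for Lemma~\ref{Clem1} (type~${\rm C}_r$) and Lemma~\ref{Blem1} (type~${\rm B}_r$): part~(i) is a direct verification from the formula for $A_{s,k}$ in type~${\rm D}_r$ together with the definitions~(\ref{ddbar}); parts~(ii) and~(iii) proceed by a case analysis determined by which factors of $X$ contain a negative power of $Y_{\ast,k}$, followed by the recipe $\tilde{e}_k X = A_{n_{e_k},k} X$.

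For part~(i), the Cartan data of type~${\rm D}_r$ has $a_{r-2,r-1}=a_{r-2,r}=-1$ and $a_{r-1,r}=0$, so the nodes $r-1$ and $r$ are each adjacent only to $r-2$. Using the definition of $A_{s,k}$ from~(\ref{asidef}) one obtains $A_{s,r-1}=Y_{s,r-1}Y_{s+1,r-1}/Y_{s+1,r-2}$ and $A_{s,r}=Y_{s,r}Y_{s+1,r}/Y_{s+1,r-2}$; for $k\le r-3$ the expression is the standard chain formula, and $A_{s,r-2}$ carries the extra factor from the fork. For each of the four identities claimed in~(i), I would compute $\varphi_k$ and hence $n_{e_k}$ of the single-factor monomial $D(l,\ast)$ using~(\ref{wtph}), multiply by $A_{n_{e_k},k}$, and simplify against the target using~(\ref{ddbar}); each verification reduces to a one-line cancellation, exactly parallel to the type~${\rm C}_r$ case.

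For parts~(ii) and~(iii), the strategy is: if $\tilde{e}_k X\ne 0$, then $X$ must contain at least one factor contributing $Y^{-1}_{\ast,k}$. Inspection of~(\ref{ddbar}) shows that for $k\in[1,r-2]$ the contributors are exactly $D(l,k)$ and $D(l,\ovl{k+1})$; for $k=r-1$ they are $D(l,r-1)$ and $D(l,\ovl r)$; for $k=r$ they are $D(l,r-1)$ and $D(l,r)$. The ordering $k_1\ngeq\cdots\ngeq k_d$ under~(\ref{D-order}) forces at most one factor of each kind in $X$, so $n_{e_k}$ has at most two candidate values coming from the two possible contributors. Comparing these candidates singles out the factor modified by $\tilde{e}_k$, and by part~(i) this modification is precisely the one appearing in case~(a) or case~(b). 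The inequalities on $k_{j+1}$ in each subcase encode the requirement that no second contributor cancels the active one and also certify that the resulting tuple still satisfies~(\ref{D-cond1})--(\ref{D-cond3}); the latter is what gives the $\mathbb B$-closure assertion at the end of~(ii).

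The main technical subtlety, absent in types~${\rm A}_r$ and~${\rm C}_r$, will be the fork of the D-diagram at node $r-2$: the single factor $D(l,r-1)=Y_{l,r-2}/(Y_{l,r-1}Y_{l,r})$ carries $Y^{-1}_{\ast,r-1}$ and $Y^{-1}_{\ast,r}$ simultaneously, and dually $D(l,\ovl{r-1})$ contributes positive powers to both indices. Combined with the incomparability of $r$ and $\ovl r$ in~(\ref{D-order}), this forces a delicate enumeration of the possible consecutive pairs $(k_j,k_{j+1})\in\{r-1,r,\ovl r,\ovl{r-1}\}^2$ with $k_j\ngeq k_{j+1}$, in order to determine in each configuration exactly which candidate position realizes $n_{e_k}$ and hence which factor is actually modified. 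I expect this enumeration at the ``fork end'' of the diagram to be the main obstacle; once it is dispatched, the argument reduces to a routine translation of the type~${\rm C}_r$ proof.
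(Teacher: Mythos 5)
Your proposal is correct and follows exactly the route the paper intends: the paper gives no standalone proof of Lemma \ref{Dlem1}, stating only that it can be verified ``similar to Lemma \ref{Clem1}'', and your plan --- direct computation with $A_{s,k}$ for part (i), then locating the factors of $X$ contributing $Y^{-1}_{*,k}$ and analyzing $n_{e_k}$ for parts (ii) and (iii) --- is precisely that translation. Your explicit attention to the fork at node $r-2$ (where $D(l,r-1)$ carries negative powers of both $Y_{*,r-1}$ and $Y_{*,r}$, and $\ovl{r}$, $r$ are incomparable in the order (\ref{D-order})) is the one genuinely new point in type ${\rm D}_r$, and you have identified the correct contributor sets in every case.
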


\begin{lem}\label{Dlem2}
For $X:=D(l_1,k_1)D(l_2,k_2)\cdots D(l_d,k_d)\in\mathbb{B}$ and $s\in\mathbb{Z}$ $(0\leq s\leq r-m+1)$, we suppose that $1\leq k_1<\cdots<k_{d-s}\leq r-1$ and $k_{d-s+1},\ \cdots,\ k_{d-1},\ k_d\in\{\ovl{r},\cdots,\ovl{1}\}\cup\{r\}$, that is, $s:=\#\{i\in [1,d]|k_i\in\{\ovl{r},\cdots,\ovl{1}\}\cup\{r\}\}$. Then there exist non-negative integers $\{a(i,j)\}_{1\leq i\leq r-d+s,\ 1\leq j\leq r}$ such that
\begin{equation}\label{Dlem2-1}
X=\tilde{e}^{a(r-d+s,1)}_1\cdots\tilde{e}^{a(r-d+s,r)}_r\cdots \tilde{e}^{a(2,1)}_1\cdots\tilde{e}^{a(2,r)}_r
 \tilde{e}^{a(1,1)}_1\cdots\tilde{e}^{a(1,d)}_d \frac{1}{Y_{m,d}}. 
\end{equation}
\end{lem}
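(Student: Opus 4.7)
The plan is to prove Lemma \ref{Dlem2} by induction on $s$, following exactly the strategy used in Lemma \ref{Clem2} for type ${\rm C}_r$ and Lemma \ref{Blem2} for type ${\rm B}_r$, but with modifications that accommodate the fork at the end of the $D_r$ Dynkin diagram.

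For the base case $s=0$ all the $k_i$ lie in $[1,r-1]$ with $1 \leq k_1 < \cdots < k_d \leq r-1$. In this regime no exceptional cases of \eqref{ddbar} arise (except possibly when $k_d=r-1$, which is harmless here), so by Lemma \ref{Dlem1}(i) each factor $D(m,i)=\frac{Y_{m,i-1}}{Y_{m,i}}$ in the product $\frac{1}{Y_{m,d}}=\prod_{i=1}^{d}D(m,i)$ is moved to $D(m-k_i+i,k_i)=D(l_i,k_i)$ by the chain $\tilde e_{k_i-1}\cdots\tilde e_{i+1}\tilde e_i$. Choosing $a(j,i)=1$ exactly for $1\le j\le k_i-i$ and zero otherwise yields the desired expression \eqref{Dlem2-1}.

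For the inductive step $s>0$ I would first establish a telescoping identity analogous to \eqref{Clem2-2}. Setting $\{k'_1<\cdots<k'_{r-1-d+s}\}:=\{1,\dots,r-1\}\setminus\{k_1,\dots,k_{d-s}\}$, a direct cancellation using \eqref{ddbar} should give
\[
D(l_1,k_1)\cdots D(l_{d-s},k_{d-s}) = D(m-1,\ovl{k'_1})D(m-2,\ovl{k'_2})\cdots D(m-r+d-s,\ovl{k'_{r-1-d+s}})\cdot R,
\]
where $R$ is an explicit Laurent monomial in the variables $Y_{*,r-1},Y_{*,r}$ playing the role of $\frac{1}{Y_{m-r+d-s,r}}$ from \eqref{Clem2-2}. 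Then, following the ${\rm C}_r$ template, I would choose an index $\kappa$ (depending on whether $k_{d-s+1}$ equals $r$ or lies in $\{\ovl{r},\ldots,\ovl{1}\}$, and on where the $k'_i$ sit relative to the $k_i$) and build an operator $F_0$ as a product of $\tilde f_i$'s and $\tilde f_i^{2}$'s that performs a single bookkeeping swap: it converts one barred (or $r$-type) factor of $X$ into a non-barred factor, leaves the shapes of the other factors intact, and sends $X$ to a monomial of $\mathbb{B}$ with exactly $s-1$ factors in $\{D(*,b)\mid b\in\{\ovl{r},\ldots,\ovl{1}\}\cup\{r\}\}$. The inductive hypothesis presents $F_0\cdot X$ in the required form, and composing with $F_0^{*}$ as defined in \eqref{Finv} yields the analogous presentation of $X$.

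The hard part will be managing the fork at the end of the $D_r$ Dynkin diagram. Unlike in types ${\rm B}_r$ and ${\rm C}_r$, the labels $r$ and $\ovl{r}$ are incomparable in the order \eqref{D-order}, and from \eqref{ddbar} the four factors $D(l,r-1)$, $D(l,r)$, $D(l,\ovl{r-1})$, $D(l,\ovl{r})$ carry quite different monomial shapes (mixing $Y_{l,r-1}$ and $Y_{l,r}$ in asymmetric ways). Consequently both the telescoping identity above and the construction of $F_0$ must branch into subcases depending on whether the $k_i$ and $k'_i$ near the spin end include $r-1$, $r$, or $\ovl{r}$. Lemma \ref{Dlem1}(iii) gives the two possible outcomes of $\tilde e_r$ (from $D(l,r-1)$ versus $D(l,r)$), and the key calculation will be verifying that in each branch the chosen $F_0$ indeed lowers $s$ by exactly one and produces a non-zero element of $\mathbb{B}$, so that the induction closes cleanly.
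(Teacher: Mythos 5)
Your proposal matches the paper's proof essentially step for step: induction on $s$, the same base case via Lemma \ref{Dlem1}(i), the same telescoping identity rewriting $D(l_1,k_1)\cdots D(l_{d-s},k_{d-s})$ as a product of barred factors divided by $Y_{\gamma,r-1}Y_{\gamma,r}$ with $\gamma=m-r+d-s+1$, and the same device of an operator $F_0$ (undone by $F_0^{*}$ as in \eqref{Finv}) that converts one barred or $r$-type factor to an unbarred one so the inductive hypothesis applies. The "fork" subcases you flag as the hard part are exactly the point where the paper itself is terse, deferring to the arguments of Lemmas \ref{Clem2} and \ref{Blem2}, so your plan is faithful to the published argument.
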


\begin{proof}

We set $\gamma:=m-r+d-s+1$. Similar to (\ref{Clem2-2}) and (\ref{Blem2-2}), we see that
\[D(l_1,k_1)D(l_2,k_2)\cdots D(l_{d-s},k_{d-s})=\frac{D(m-1,\ovl{k'_1})D(m-2,\ovl{k'_2})\cdots D(\gamma,\ovl{k'_{m-\gamma}})}{Y_{\gamma,r-1}Y_{\gamma,r}} \]
where $\{k'_1,\ k'_2,\ \cdots,\ k'_{m-\gamma} \}:=\{1,2,\cdots,r-1\}\setminus\{k_1,\ k_2,\ \cdots,\ k_{d-s} \}$ and $k'_1<k'_2<\cdots<k'_{m-\gamma}$. Thus,
\[
X=\frac{D(m-1,\ovl{k'_1})\cdots D(\gamma,\ovl{k'_{m-\gamma}}) D(l_{d-s+1},k_{d-s+1}) \cdots D(l_{d},k_{d})}{ Y_{\gamma,r-1}Y_{\gamma,r}}.\]
Putting $\kappa:=k'_{m-\gamma}$, we suppose that $k_{j}<\ovl{\kappa}\leq k_{j+1}$ for some $j$ $(d-s\leq j\leq d)$, where $k_{d+1}:=\ovl{1}$.

The remaining part of the proof is the same as in Lemma \ref{Blem2}, that is, acting the Kashiwara operators $\{\tilde{f}_i\}$ on $X$ properly, we can send the factor $D(l_{d-s+1},k_{d-s+1})$ to $D(l_{d-s+1}+1,r-1)$. Similar to the proof of Lemma \ref{Blem2}, using induction on $s$ and by its assumption, we see that the monomial $X$ can be written as in (\ref{Dlem2-1}). 

\end{proof}

[{\sl Proof of Theorem \ref{thm1} for type ${\rm D}_r$}]

The case $i_k=d<r-1$:

Similar to the cases of type ${\rm C}_r$ and type ${\rm B}_r$, we can verify $\mu(B^-(\Lm_d)_{u_{\leq k}})=\mathbb{B}$ by using Lemma \ref{Dlem1} and \ref{Dlem2}. 

The case $i_k=d=r-1$ or $r$:

Next, let us prove Theorem \ref{thm1} for type ${\rm D}_r$ of the cases $i_k=r-1$ and $i_k=r$. Since we can prove both cases in almost the same way, we shall prove the case $i_k=r$ only. We use the notation in (\ref{ddbar}) and (\ref{ddbar2}). Set the subset $\mathbb{B}^{(+)}_{sp}\subset\cY$ as
\[ \mathbb{B}^{(+)}_{sp}:=\{D(m-1,\ovl{k_1})\cdots D(m-s,\ovl{k_s})D(m-s,r+1) | \{k_i\}\ {\rm and}\ s\ {\rm satisfy}\ (\ref{D-spin-set}) \}. \]

Similar to Lemma \ref{Bspin-lem1}, we can verify the following lemma:

\begin{lem}\label{Dspin-lem1}

\begin{itemize} 
\item[(i)]We have $\tilde{e}_r D(l,r+1)=D(l-1,\ovl{r-1})D(l-2,\ovl{r})D(l-2,r+1)$.
\item[(ii)]For $X:=D(m-1,\ovl{k_1})\cdots D(m-s,\ovl{k_s})D(m-s,r+1)\in\mathbb{B}^{(+)}_{sp}$ and $k\in[1,r-1]$, if $\tilde{e}_k X\neq0$, then there exists $j$ $(1\leq j\leq s)$ such that $k_j=\ovl{k+1}$, $k_{j+1}<k$ and
\[
\tilde{e}_k X=D(m-1,\ovl{k_1})\cdots D(m-j,\ovl{k})\cdots D(m-s,\ovl{k_s})D(m-s,r+1).  
\]
In particular, we have $\tilde{e}_k X\in \mathbb{B}^{(+)}_{sp}$. If $\tilde{e}_r X\neq0$ then $k_s\leq r-2$ and
\[
\tilde{e}_r X=D(m-1,\ovl{k_1})\cdots D(m-s,\ovl{k_s})\\
\cdot D(m-s-1,\ovl{r-1})D(m-s-2,\ovl{r})D(m-s-2,r+1).
\]
\end{itemize}

\end{lem}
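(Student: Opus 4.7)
My plan is to prove both parts by direct Kashiwara-operator calculations, following the scheme already used in Lemmas \ref{Clem1}, \ref{Blem1}, \ref{Bspin-lem1} and \ref{Dlem1}.

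For part (i), I treat $D(l,r+1)=Y_{l,r}^{-1}$ as an element of $\mathcal{Y}$ with the single non-zero exponent $\zeta_{l,r}=-1$. This gives $\varphi_r(D(l,r+1))=0$, $\varepsilon_r(D(l,r+1))=1$, and $n_{e_r}=l-1$. In type ${\rm D}_r$ the Dynkin data yields $a_{r-2,r}=-1$ as the only non-zero off-diagonal entry in column $r$ (in particular $a_{r-1,r}=0$), while $p_{j,r}=1$ for every $j<r$; hence $A_{l-1,r}=Y_{l-1,r}Y_{l,r}Y_{l,r-2}^{-1}$. Multiplying gives $\tilde{e}_r D(l,r+1)=Y_{l-1,r}Y_{l,r-2}^{-1}$, and a direct telescoping verifies that $D(l-1,\overline{r-1})D(l-2,\overline{r})D(l-2,r+1)$ equals the same Laurent monomial.

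For part (ii) with $k\in[1,r-1]$, I will carry out the exponent bookkeeping column by column on $X$. Since $D(m-s,r+1)$ involves only $Y_{*,r}$, it contributes nothing to $\zeta_{*,k}$ for $k<r$, so the analysis reduces to the factors $D(m-i,\overline{k_i})$. A case split on $k_i\in\{\leq r-2,\,r-1,\,r\}$ and on $k\in\{\leq r-3,\,r-2,\,r-1\}$ pins down exactly where each factor contributes $\pm 1$ to $\zeta_{*,k}$; because $k_1<\cdots<k_s$ strictly, at most one index $j$ produces a negative contribution at the relevant position. Reading off $n_{e_k}$ and multiplying $X$ by the explicit $A_{n_{e_k},k}$ then transforms the distinguished factor $D(m-j,\overline{k_j})$ into $D(m-j,\overline{k})$ (exactly the local rule from Lemma \ref{Dlem1}(i)), leaving the other factors untouched, which is the stated formula.

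For the $\tilde{e}_r$-case, the unique source of a negative exponent of $Y_{*,r}$ in $X$ is the factor $D(m-s,r+1)$ contributing $-1$ at position $m-s$; a positive contribution at the same position arises only from $D(m-s,\overline{k_s})$, and then only when $k_s\in\{r-1,r\}$, since the earlier factors are pinned to $k_i<k_s$. Hence $\varepsilon_r(X)>0$ iff $k_s\leq r-2$, in which case one computes $n_{e_r}=m-s-1$, and $A_{m-s-1,r}$ is supported only in columns $r$ and $r-2$, so multiplying it into $X$ affects only the rightmost sub-monomial $D(m-s,r+1)$; invoking part (i) with $l=m-s$ then reproduces the desired factorisation. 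The main obstacle I expect is the case $k=r-2$ in part (ii), because $D(m-i,\overline{r-1})=Y_{m-i,r}Y_{m-i,r-1}Y_{m-i+1,r-2}^{-1}$ straddles three different $Y$-columns and so mixes with $\tilde{e}_{r-2}$ in a less transparent way; I will need to check carefully that, despite this, the unique negative-exponent position still lies where Lemma \ref{Dlem1}(i) places it, so that the local replacement rule applies cleanly.
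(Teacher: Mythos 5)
Your proposal is correct and follows essentially the same route as the paper, which proves the analogous type-$\mathrm{B}$ statement (Lemma \ref{Bspin-lem1}) by exactly this method --- computing $\tilde{e}_r D(l,r+1)=A_{l-1,r}Y_{l,r}^{-1}$ directly for (i), and for (ii) locating the unique negative exponent of $Y_{*,k}$ among the factors, reading off $n_{e_k}$, and applying the local rule of Lemma \ref{Dlem1}(i) --- and then asserts the $\mathrm{D}_r$ case is similar. Your computation $A_{l-1,r}=Y_{l-1,r}Y_{l,r}Y_{l,r-2}^{-1}$ (using $a_{r-1,r}=0$, $a_{r-2,r}=-1$) and the resulting identity $Y_{l-1,r}Y_{l,r-2}^{-1}=D(l-1,\ovl{r-1})D(l-2,\ovl{r})D(l-2,r+1)$ check out, as does the criterion $\varepsilon_r(X)>0\Leftrightarrow k_s\leq r-2$.
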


Now, let us complete the proof of Theorem \ref{thm1}. All we have to show is $\mu(B^-(\Lm_r)_{u_{\leq k}})=\mathbb{B}^{(+)}_{sp}$. Similar to the cases of type ${\rm B}_r$, ${\rm C}_r$ and ${\rm D}_r$ $(i_k<r-1)$, we can prove $\mu(B^-(\Lm_r)_{u_{\leq k}})\subset\mathbb{B}^{(+)}_{sp}$ by using Lemma \ref{Dspin-lem1} (ii). 

Let us prove $\mu(B^-(\Lm_r)_{u_{\leq k}})\supset\mathbb{B}^{(+)}_{sp}$. For an arbitrary monomial $X=D(m-1,\ovl{k_1})\cdots D(m-s,\ovl{k_s})D(m-s,r+1)\in\mathbb{B}^{(+)}_{sp}$, it follows from Lemma \ref{Dspin-lem1} that
\[ \tilde{e}_{k_1+1}\cdots\tilde{e}_{r-3}\tilde{e}_{r-2}\tilde{e}_r D(m,r+1)=D(m-1,\ovl{k_1})D(m-2,\ovl{r})D(m-2,r+1), \]
and
\[
 (\tilde{e}_{k_2+1}\cdots\tilde{e}_{r-3}\tilde{e}_{r-2}\tilde{e}_{r-1}) (\tilde{e}_{k_1+1}\cdots\tilde{e}_{r-3}\tilde{e}_{r-2}\tilde{e}_r) D(m,r+1)= \\
D(m-1,\ovl{k_1})D(m-2,\ovl{k_2})D(m-2,r+1).
\]

Using Lemma \ref{Dspin-lem1} repeatedly, we obtain
\begin{multline*}
(\tilde{e}_{k_s+1}\cdots\tilde{e}_{r-2}\tilde{e}_{r-1})(\tilde{e}_{k_{s-1}+1}\cdots\tilde{e}_{r-2}\tilde{e}_r) 
\cdots (\tilde{e}_{k_2+1}\cdots\tilde{e}_{r-2}\tilde{e}_r)(\tilde{e}_{k_1+1}\cdots\tilde{e}_{r-2}\tilde{e}_r) D(m,r+1)\\
=D(m-1,\ovl{k_1})D(m-2,\ovl{k_2})\cdots D(m-s,\ovl{k_s})D(m-s,r+1)=X. 
\end{multline*}
Recall that $D(m,r+1)=Y_{m,r}=\mu(v_{\Lm_r})$, where $v_{\Lm_r}$ is the lowest weight vector of the crystal base $B(\Lm_r)$. Therefore, by Theorem \ref{kashidem}, we get $X\in\mu(B^-(\Lm_r)_{u_{\leq k}})$. Hence, we obtain $\mu(B^-(\Lm_r)_{u_{\leq k}})\supset\mathbb{B}^{(+)}_{sp}$. \qed

Due to the above arguments, we have completely shown Theorem \ref{thm1}.

\subsection{A counter example}

In this final subsection, we shall see an example such that $\Delta(k;\textbf{i})(\textbf{Y})$ can not be described as the total sum of monomials in any Demazure crystals.

\begin{ex}

Let $G$ be the simple algebraic group of type ${\rm C}_3$, and we set $u:=(s_1s_2s_3)^3\in W$, ${\rm \bf{i}}:=(1,2,3,1,2,3,1,2,3)$ and $k=3$. In the notation of Theorem \ref{thm1}, we have $m=3$, $d=i_k=3$, and $i_k$ belongs to $(m-2)$th cycle. Hence, this setting does not satisfy the condition in Theorem \ref{thm1} such that $i_k$ belongs to $(m-1)$th cycle. In this case, following \cite{KaN2:2016}, we obtain
\begin{eqnarray*}
\Delta^L(3;{\rm \bf{i}})(\textbf{Y})&=&\left(\frac{1}{Y_{2,3}Y_{3,3}}+\frac{Y_{1,3}}{Y^2_{2,2}Y_{3,3}}+\frac{Y_{1,3}Y_{2,3}}{Y^2_{2,2}Y^2_{3,2}}+\frac{2Y_{1,2}}{Y_{2,1}Y_{2,2}Y_{3,3}}+\frac{Y^2_{1,2}}{Y_{1,3}Y^2_{2,1}Y_{3,3}}
+\cdots+\frac{Y^2_{1,1}}{Y_{1,3}Y^2_{2,1}Y_{2,3}}\right)\\
&+&2\left(\frac{Y_{1,2}}{Y_{2,1}Y_{3,1}Y_{3,2}}+\frac{Y_{1,1}}{Y_{3,1}Y_{3,2}}+\frac{Y_{1,1}Y_{2,1}}{Y_{2,2}Y_{3,2}}+\frac{Y_{1,1}Y_{1,2}}{Y_{1,3}Y_{3,2}}+\frac{Y_{1,1}Y_{1,2}Y_{2,2}}{Y_{1,3}Y_{2,3}Y_{3,1}}\right).
\end{eqnarray*}

The set of the monomials in the part $(\frac{1}{Y_{2,3}Y_{3,3}}+\frac{Y_{1,3}}{Y^2_{2,2}Y_{3,3}}+\frac{Y_{1,3}Y_{2,3}}{Y^2_{2,2}Y^2_{3,2}}+\frac{2Y_{1,2}}{Y_{2,1}Y_{2,2}Y_{3,3}}+\frac{Y^2_{1,2}}{Y_{1,3}Y^2_{2,1}Y_{3,3}}
+\cdots+\frac{Y^2_{1,1}}{Y_{1,3}Y^2_{2,1}Y_{2,3}})$ coincides with a monomial realization of the Demazure crystal $B(2\Lambda_3)_{s_1s_2s_3}$. The set of the monomials in $(\frac{Y_{1,2}}{Y_{2,1}Y_{3,1}Y_{3,2}}+\frac{Y_{1,1}}{Y_{3,1}Y_{3,2}}+\frac{Y_{1,1}Y_{2,1}}{Y_{2,2}Y_{3,2}}+\frac{Y_{1,1}Y_{1,2}}{Y_{1,3}Y_{3,2}}+\frac{Y_{1,1}Y_{1,2}Y_{2,2}}{Y_{1,3}Y_{2,3}Y_{3,1}})$ coincides with a monomial realization of certain subset of the crystal $B(2\Lambda_2)$, but it does not coincide with any Demazure crystals $B(2\Lambda_2)_w$ $(w\in W)$.
\end{ex}





%

%

\end{document}